\DeclareFontFamily{OML}{rsfs}{\skewchar\font'177}
\DeclareFontShape{OML}{rsfs}{m}{n}{ <5> <6> rsfs5 <7> <8> <9>
rsfs7 <10> <10.95> <12> <14.4> <17.28> <20.74> <24.88> rsfs10 }{}
\DeclareMathAlphabet{\mathfs}{OML}{rsfs}{m}{n}
\newcommand{\BE}{{\mathbb{E}}}
\newcommand{\BH}{{\mathbb{H}}}
\newcommand{\BP}{{\mathbb{P}}}
\newcommand{\BR}{{\mathbb{R}}}
\newcommand{\CA}{{\mathcal{A}}}
\newcommand{\CB}{{\mathcal{B}}}
\newcommand{\CC}{{\mathcal{C}}}
\newcommand{\CI}{{\mathcal{I}}}
\newcommand{\CL}{{\mathcal{L}}}
\newcommand{\CP}{{\mathcal{P}}}
\newcommand{\CT}{{\mathcal{T}}}
\newcommand{\CV}{{\mathcal{V}}}
\newcommand{\CZ}{{\mathcal{Z}}}
\newcommand{\dd}{{\rm d}}
\newtheorem{theorem}{Theorem}[section]
\newtheorem{proposition}[theorem]{Proposition}
\newtheorem{lemma}[theorem]{Lemma}
\newtheorem{corollary}[theorem]{Corollary}
\def\namedlabel#1#2{\begingroup
    #2%
    \def\@currentlabel{#2}%
    \phantomsection\label{#1}\endgroup
}
\begin{document}
\numberwithin{equation}{section} \numberwithin{figure}{section}
\title{The Poisson stick model in hyperbolic space}
\author{Erik I. Broman\footnote{Chalmers University of Technology and University of Gothenburg, email: broman@chalmers.se}, Johan H. Tykesson\footnote{Chalmers University of Technology and University of Gothenburg, email: johant@chalmers.se}}
\maketitle

\begin{abstract}
In this paper we study the Poisson stick model in two dimensional hyperbolic space 
$\BH^2,$ where the sticks all have length $L.$
Typically, percolation models in hyperbolic space 
undergo two phase transitions as the intensity $\lambda$ varies, namely the percolation phase 
transition and the uniqueness phase transition. For the Poisson stick model, 
the critical intensities at which these transitions
occur will depend on $L$, and in this paper we study the asymptotic behavior of these critical points 
as $L\to \infty.$ Our main results show that the critical point for the percolation phase transition 
scales like $L^{-2},$ while the critical point for the uniqueness phase transition 
scales like $L^{-1}.$ Comparing these results to the analogous results in Euclidean space show that 
the behavior of the percolation phase transition is the same in these two settings, while 
the uniqueness phase transition scales differently. 
\end{abstract}

\noindent
{\bf Subject classification:} 60K35, 82B43 \\
{\bf Keyword:} stick percolation, phase transitions, hyperbolic space

\section{Introduction}
The Poisson stick model was first studied in \cite{R_91}, although there is some earlier work
such as \cite{DK_84} and \cite{H_85} where it was implicitly included as part of a more general setup. 
The stick model has 
been used to study various phenomena in material sciences (see for instance \cite{MNT_14}, \cite{SKKSW_03}
and \cite{TE_19} and the references therein). In fact, most of the papers concerning stick models 
are in the applied sciences
and not in the mathematical literature. Recently however, the paper \cite{B_22} studied the Poisson stick 
model in Euclidean space, and in particular the behavior of a critical parameter was studied 
as the length of the sticks grows. The exact result is stated below. The purpose of this paper is to 
investigate how the underlying geometry affects those results, and in particular, we here study 
the case when the underlying geometry is hyperbolic rather than Euclidean.

The Poisson stick model that we study here is a Poisson process $\omega^{\lambda,L}$ of sticks 
$l_L$ of length $L,$ and this model
is invariant in distribution under isometries of $\BH^2$ (see Section \ref{sec:Poistick} for details).
Here, $\lambda>0$ denotes the intensity of this process and as $\lambda$ varies, the model 
undergoes two phase transitions as we now explain. First, we let $\CC=\CC(\omega^{\lambda,L})$ be defined 
by
\begin{equation} \label{eqn:CCdef}
\CC:=\bigcup_{l_L\in \omega^{\lambda,L}} l_L
\end{equation}
so that $\CC$ is the occupied set, and then we define 
\begin{equation} \label{eqn:deflambdac}
\lambda_c(L)
:=\inf\{\lambda>0: \BP(\CC(\omega^{\lambda,L}) 
\textrm{ contains an unbounded connected component})>0\}
\end{equation}
and also 
\begin{equation} \label{eqn:deflambdau}
\lambda_u(L) 
:=\inf\{\lambda>0: \BP(\CC(\omega^{\lambda,L}) 
\textrm{ contains a unique unbounded connected component})>0\}. 
\end{equation}
We note that it is not hard to show that the probabilities involved in these definitions must be either 
0 or 1, see further Section \ref{sec:Poistick}.

Our first result is the following and it concerns $\lambda_c(L).$
\begin{theorem} \label{thm:lambdacmain}
For the Poisson stick model in $\BH^2,$ we have that 
for every $0<L<\infty$ large enough,
\[
\frac{\pi}{2}L^{-2} \leq  \lambda_c(L) \leq \frac{32 \pi}{\sqrt{3}-1} L^{-2}.
\]
\end{theorem}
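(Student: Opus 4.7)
The plan splits the theorem into two independent halves, which I would tackle by fairly standard techniques whose difficulty lies in the hyperbolic geometry rather than in the probabilistic framework.

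\textbf{Lower bound.} My plan is to dominate the cluster of a typical stick by a subcritical Galton--Watson process. Let $\mu$ be the isometry-invariant measure on sticks of length $L$ with respect to which $\omega^{\lambda,L}$ is a Poisson process of intensity $\lambda$. By Slivnyak's theorem, conditioning on a stick $l$ belonging to the process leaves the rest distributed as an independent copy of $\omega^{\lambda,L}$, and hence the number of other sticks meeting $l$ is $\mathrm{Poisson}(\lambda c(L))$, where
\[
c(L)\;:=\;\int \mathbf{1}\{l'\cap l\neq\emptyset\}\,\dd\mu(l').
\]
I would first compute $c(L)=\tfrac{2}{\pi}L^{2}$ by hyperbolic integral geometry: parametrise $l'$ by its midpoint and signed angle, change variables so the intersection point with $l$ replaces the midpoint of $l'$, and reduce the inner integral to a one-dimensional integral along $l$. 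Because that inner integral is intrinsically $1$D, the hyperbolic curvature drops out and the Euclidean value survives. A breadth-first exploration of the cluster of $l$, invoking independence of the Poisson process on disjoint regions of stick-space at each step, then shows that the cluster is stochastically dominated by a Galton--Watson tree with $\mathrm{Poisson}(\lambda c(L))$ offspring distribution. For $\lambda<\tfrac{\pi}{2}L^{-2}$ this tree is subcritical, so every cluster is finite a.s.; the $0$--$1$ argument referred to in Section \ref{sec:Poistick} then promotes this to the absence of an unbounded component.

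\textbf{Upper bound.} For this half I would set up a renormalisation onto a regular hyperbolic tiling $\CT$ of $\BH^2$ whose tiles have diameter of order $L$ and whose tile-adjacency graph $G$ is an infinite graph of bounded degree with site-percolation threshold strictly less than $1$ -- automatic for any $(p,q)$-tiling with $(p-2)(q-2)>4$. For each ordered pair $(T,T')$ of neighbouring tiles I would introduce a \emph{link event} $A_{T,T'}$ that $\omega^{\lambda,L}$ contains a stick with its two endpoints in prescribed sub-regions of $T$ and $T'$ respectively; any such stick is contained in $T\cup T'$, so link events for edge-disjoint pairs are realised by disjoint subsets of stick-space and are therefore independent, making $\{A_{T,T'}\}$ a finite-range dependent family. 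Using the $\mathrm{Isom}(\BH^{2})$-invariance of $\mu$, one computes $\mu\{l : l \text{ realises the link}\}$ as a product of an admissible midpoint area and an admissible angular window; the constants $32\pi$ and $\sqrt{3}-1$ should emerge from a specific, carefully optimised choice of these sub-regions, plausibly based on an equilateral hyperbolic construction at scale $L/2$. For $\lambda\geq\tfrac{32\pi}{\sqrt{3}-1}L^{-2}$ I would then arrange that $\BP(A_{T,T'})$ exceeds the site-percolation threshold of $G$ by enough margin to absorb the finite-range dependence via a Liggett--Schonmann--Stacey domination, and any infinite cluster in the resulting Bernoulli process yields an unbounded component of $\CC$.

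\textbf{Main obstacle.} The lower bound should be essentially routine once $c(L)$ has been identified, as the Galton--Watson domination for Poisson intersection graphs is standard. The real work lies in the upper bound, and specifically in the geometric optimisation: the tile shape, the endpoint sub-regions and the angular windows must be chosen simultaneously so that (i) the link measure is explicitly tractable, (ii) link events for edge-disjoint pairs are genuinely independent, and (iii) the resulting probability exceeds the site-percolation threshold of $G$ at the claimed $\lambda$. Because hyperbolic polygons have perimeter growing exponentially with their diameter, any crude construction forces $\lambda$ to scale much faster than $L^{-2}$; extracting the correct $L^{-2}$ scaling with the precise constant $\frac{32\pi}{\sqrt{3}-1}$ is where the bulk of the computation sits, and where the specific geometric choices will be dictated by the target constant.
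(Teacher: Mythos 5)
Your lower bound argument is essentially the paper's: both reduce to the computation that the expected number of sticks meeting a fixed stick is $\tfrac{2}{\pi}\lambda L^2$ (Lemma \ref{lemma:altPoisson} is exactly what makes this integral tractable, and confirms your intuition that the hyperbolic factors drop out along the one-dimensional target), and both dominate the cluster by a Galton--Watson tree which is subcritical when $\lambda<\tfrac{\pi}{2}L^{-2}$. The paper's Proposition \ref{prop:lambdaclowerbound} implements the exploration slightly more carefully, padding each generation with an auxiliary independent copy of the process so the offspring counts are exactly i.i.d.\ rather than merely dominated, but your Slivnyak-plus-breadth-first sketch would fill out to a correct proof.

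The upper bound, however, rests on a construction that cannot exist in $\BH^2$, and no optimisation of sub-regions will rescue it. You need simultaneously a $(p,q)$-tiling whose adjacency graph has \emph{bounded} degree (so that the site-percolation threshold and the Liggett--Schonmann--Stacey constants are fixed, $L$-independent) and tiles of diameter $\Theta(L)$ (so that a single length-$L$ stick can link two neighbouring tiles). These are incompatible: for fixed $(p,q)$ the tile has a fixed circumradius, and to make that circumradius grow like $L$ one must take $p\sim e^{cL}$, whereupon each tile has $e^{cL}$ neighbours. Once the degree $\Delta$ of the renormalised graph is exponential in $L$, even a crude Peierls bound needs the link-failure probability to be at most $1/\Delta\sim e^{-cL}$, which forces $\lambda\,\mu(\text{link sticks})\gtrsim L$ and hence $\lambda\gtrsim L^{-1}$ (since any link measure is $\lesssim L^2$); LSS domination, whose thresholds degrade far faster than polynomially in $\Delta$, does even worse. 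Conversely a fixed, $O(1)$-diameter tiling produces link measures of order $L$ rather than $L^2$ and again gives $\lambda\gtrsim L^{-1}$. So the $L^{-2}$ scaling is unreachable by renormalising onto an ambient tiling. The paper's Proposition \ref{prop:half-plane} avoids this entirely by building the renormalised tree \emph{out of the sticks themselves}: each selected stick determines the half-plane bounded by the geodesic orthogonal to it at its far quarter-point, and Lemma \ref{lemma:half-planes} is exactly the geometric input that the two half-planes spawned from the upper and lower offspring are disjoint and nested inside the parent's half-plane. This embeds a Galton--Watson tree with $\mathrm{Binomial}(2,q)$ offspring and $q=1-\exp\bigl(-\lambda\tfrac{\sqrt{3}-1}{32\pi}L^2\bigr)$; supercriticality needs only $q>\tfrac12$, which $\lambda\geq\tfrac{32\pi}{\sqrt{3}-1}L^{-2}$ achieves directly, with no sprinkling and no dependence on a tiling degree.
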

Our second result concerns the uniqueness phase transition.
\begin{theorem} \label{thm:lambdaumain}
For the Poisson stick model in $\BH^2,$ we have that 
for every $0<L<\infty$ large enough,
\[
\frac{\pi}{2} L^{-1} \leq  \lambda_u(L) \leq 5\sqrt{2} \pi L^{-1}.
\]
\end{theorem}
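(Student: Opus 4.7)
The foundation is a hyperbolic Buffon identity: for any bi-infinite geodesic $\ell\subset\BH^2$, the sticks in $\omega^{\lambda,L}$ that cross $\ell$ project to a Poisson point process on $\ell$ of intensity $\frac{2\lambda L}{\pi}$ per unit hyperbolic length. I would prove this by re-parametrising the isometry-invariant measure $dA\,d\theta/(2\pi)$ on sticks (midpoint and angle) in terms of the crossing triple (crossing point $p\in\ell$, crossing angle $\phi\in[0,2\pi)$, signed midpoint-to-crossing offset $s\in[-L/2,L/2]$): a Jacobian computation, valid in $\BH^2$ exactly as in $\BR^2$ because both sides of the identity are isometry-invariant, yields $|\sin\phi|\,dp\,d\phi\,ds/(2\pi)$. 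Integrating over $\phi$ and $s$ gives the claimed intensity. The threshold $\frac{\pi}{2}L^{-1}$ is precisely where this intensity equals $1$, which already hints at both halves of the theorem.

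For the lower bound $\lambda_u(L)\geq\frac{\pi}{2}L^{-1}$, I would fix $\lambda\in(\lambda_c(L),\frac{\pi}{2L})$, which is non-empty for large $L$ by Theorem~\ref{thm:lambdacmain}, and exhibit two distinct unbounded clusters with positive probability. Take two ultraparallel geodesics $\ell_1,\ell_2$ bounding disjoint half-planes $H_1,H_2$, and show that with positive probability each $H_i$ contains an unbounded component of $\CC$ disjoint from $\ell_i$; disjointness of $H_1$ and $H_2$ then forces these components to be distinct. Since the crossing intensity on $\ell_i$ is strictly below $1$, a first-moment bound on crossings combined with a peeling/exploration of the cluster of a point deep inside $H_i$ shows that the probability of the cluster ever producing a crossing of $\ell_i$ is strictly less than one; combining this with supercriticality from Theorem~\ref{thm:lambdacmain} via an FKG-type argument delivers an unbounded cluster confined to $H_i$.

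For the upper bound $\lambda_u(L)\leq 5\sqrt{2}\pi L^{-1}$, I would fix $\lambda>5\sqrt{2}\pi/L$, so the Buffon intensity on any geodesic exceeds $10\sqrt{2}$. Pick a geodesic tessellation of $\BH^2$ into polygons of diameter comparable to $L$, whose dual graph is tree-like with an explicitly computable uniqueness threshold $p_u$. For each tile $T$ declare $T$ good if the sticks crossing $\partial T$ lie in a single $\CC$-cluster and this cluster joins up with the analogous cluster of every neighbouring tile. The key quantitative input is an integral-geometry estimate: two sticks whose crossing points on a shared geodesic lie within a window of size $\delta\ll L$ intersect in $\BH^2$ with probability bounded away from zero. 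The high Buffon intensity produces enough such pairs so that $\BP(T\text{ good})>p_u$; Bernoulli domination on the tessellation graph then yields uniqueness, with the constant $5\sqrt{2}$ calibrated to make this comparison succeed.

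The principal obstacle is the coexistence step in the lower bound: controlling the probability that an unbounded cluster originating deep inside a hyperbolic half-plane never reaches the bounding geodesic is a boundary-to-bulk estimate that couples subcriticality of the crossing Poisson process on $\ell_i$ with the infiniteness of the cluster, and is where the sharp constant $\pi/2$ enters. On the upper bound, the analogous technical heart is the two-stick intersection estimate conditioned on close crossings of a common geodesic, together with the Bernoulli calibration against the tessellation's uniqueness threshold; both bounds lean on the hyperbolic Buffon formula as their common departure point.
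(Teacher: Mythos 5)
Your starting point --- the Buffon-type identity giving crossing intensity $\frac{2\lambda L}{\pi}$ on a geodesic --- is exactly Lemma~\ref{lemma:altPoisson} and correctly identifies the threshold $\frac{\pi}{2}L^{-1}$, but the route you take from there contains a genuine gap in the lower bound. Your plan is to fix two disjoint half-planes $H_1,H_2$ and argue that the cluster of a seed deep inside $H_i$ is unbounded yet never crosses the bounding geodesic $\ell_i$, because the crossing intensity on $\ell_i$ is below $1$. This does not work. The subcritical crossing density on $\ell_i$ controls how many sticks of the entire process meet $\ell_i$ per unit length; it gives no control over whether the infinite supercritical cluster you explore eventually has one of its infinitely many sticks drift toward $\ell_i$ and cross. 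The events ``cluster unbounded'' and ``cluster avoids $\ell_i$'' have opposite monotonicity in the Poisson process, so no FKG-type inequality combines them, and a first-moment bound on crossings of $\ell_i$ says nothing about the cluster of a fixed seed. Such a confinement estimate would moreover have to fail above $\lambda_u$, and your sketch gives no mechanism for that transition. The paper's argument is essentially forced to be different: Lemma~\ref{lemma:CVgeodesics} shows that for $\lambda<\frac{\pi}{2}L^{-1}$ the vacant set $\CV$ almost surely contains a \emph{bi-infinite geodesic}, by verifying the condition $\alpha<1$ in the well-behavedness framework of \cite{BJST_09}; that random separating geodesic, together with half-plane percolation on both sides (Proposition~\ref{prop:half-plane} and Lemma~\ref{lemma:half-planeperc}), produces two distinct unbounded components of $\CC$. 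The inequality $\frac{2\lambda L}{\pi}<1$ you computed is precisely the $\alpha<1$ condition, but passing from it to a separating structure requires the planarity-specific geodesics-in-the-vacant-set theorem of \cite{BJST_09}, not a cluster-confinement argument; the paper itself remarks that this dependence on planarity is why the lower bound does not extend beyond $d=2$.

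For the upper bound $\lambda_u(L)\leq 5\sqrt{2}\pi L^{-1}$, your tessellation plus Bernoulli-domination scheme is a genuinely different strategy from the paper's but skips its own hardest step: dual graphs of hyperbolic tessellations are nonamenable with a uniqueness threshold $p_u$ that is not ``explicitly computable,'' so calibrating $5\sqrt{2}$ against $p_u$ needs a quantitative input you have not supplied, and the two-stick closeness estimate would further need to be converted into a block event with the right locality for domination. The paper avoids all of this: Lemma~\ref{lemma:Vconnect} shows $\BP\bigl(B^h(o,1)\stackrel{\CV}{\longleftrightarrow}B^h(x,1)\bigr)\leq 100\,e^{-1.15\,d_h(o,x)}$ by walking along $l[o,x]$ and, at each unit step with probability uniformly bounded below, finding a stick whose two ends each seed half-plane percolation via Corollary~\ref{cor:seedperc}, so the resulting occupied cluster severs any vacant connection; this decay beats the volume growth $e^{R}$ of $\partial B^h(o,R)$, ruling out unbounded vacant components, and planarity then forces uniqueness of the unbounded occupied component. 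Your proposal is in the same spirit as this step-by-step blocking construction, but without the direct vacant-set decay it does not close the argument.
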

\noindent
{\bf Remark:}  We remark that it is possible to improve both of 
the constants in the upper 
bounds by making more precise calculations in the proofs. However, there is no clear
way to improve the lower bounds by adjusting the existing proofs. As explained in the 
remarks after the proof of Proposition \ref{prop:half-plane} and at the very end of 
the paper, we choose relative simplicity in place of obtaining the best possible
but still non-sharp value on the constants in the upper bounds.

\medskip

Although we believe that Theorems \ref{thm:lambdacmain} and \ref{thm:lambdaumain} have 
intrinsic values, we want to contrast these results to what is known in Euclidean space
(see \cite{B_22}). To that end, let $\lambda^E_c(L)$ and $\lambda^E_u(L)$ denote the 
critical values for the Poisson stick process in $\BR^d,$ defined analogously to 
\eqref{eqn:deflambdac} and \eqref{eqn:deflambdau}
respectively. The main result of \cite{B_22} 
was that 
for any $d\geq 2,$ $\lambda^E_c(L)\sim L^{-2}$ as $L\to \infty.$ More precisely, for any $ d \geq 2,$ 
there exist two constants $0<c^E(d)<C^E(d)<\infty$ such that for any $L$ large enough 
\[
c^E(d)L^{-2}\leq \lambda^E_c(L) \leq C^E(d) L^{-2}.
\]
As a side-note, the constants $c^E(d)$ and $C^E(d)$ were provided explicitly, but  
were very far apart and presumably neither were close to being sharp.
The critical value $\lambda^E_u(L)$ was not mentioned in \cite{B_22}. However, we conjecture
that in the Euclidean case,  whenever the stick model there percolates, there is in fact a 
unique connected component. This has not been formally proven, but the arguments 
in \cite{MR_96} (which in turn builds on classical percolation arguments, see for 
instance \cite{G_99}) transfers to this current setting so that we are very comfortable 
making this conjecture. Thus, in the Euclidean case, we conjecture that in fact 
also $\lambda^E_u(L)\sim L^{-2}$ since we believe that $\lambda^E_c(L)=\lambda^E_u(L)$. As we see, 
our main results show that for $\BH^2,$ the scaling of $\lambda_c(L)$  is the same as 
$\lambda^E_c(L)$ when $d\geq 2,$ but that the scaling of $\lambda_u(L)$ is different from 
$\lambda^E_u(L)$ (if indeed the conjecture about $\lambda^E_u(L)$ holds). 
There is an intuitive reason for this difference, but the discussion is too long
for an introduction. We therefore postpone explaining the intuition until the end of Section 
\ref{sec:preliminaries}.

In this paper, we only consider $\BH^2$ rather than $\BH^d$ for $d\geq 2.$
Of course, in $\BH^d$ where $d\geq 3,$ one would need to consider sticks of width 
say 1. As we argue in the remark at the end of Section \ref{sec:lambdac}, one should be
able to adjust the proof of Theorem \ref{thm:lambdacmain} to
$\BH^d$ for any $d \geq 3$ (with an analogous result). However, we anticipate that there could 
be considerable added technical difficulties. Therefore,
we are afraid that generalizing the arguments to arbitrary $d\geq 2$ would obfuscate the ideas in
technicalities, without providing much added value. Indeed, the similarities between 
hyperbolic and Euclidean spaces for $\lambda_c$,  and the difference in scaling behavior 
for $\lambda_u$ are demonstrated already for $d=2.$

The proof of Theorem \ref{thm:lambdaumain} uses planarity in a crucial way for the upper 
direction. In addition, the proof of the lower bound relies on a result which has only been 
proven for the planar case.

We also note that, as stated in \cite{B_22}, the result concerning $\lambda_c^E(L)$
for $\BR^2$ is very easy to prove 
through a simple scaling argument. Indeed, consider the model with parameters $(\lambda,L)$ 
and let $a>0$. Applying the map $x\mapsto a x$ to the model results in a model with 
parameters $(\lambda/a^2,a L)$. From this, one sees that $\lambda^E_c(a L)=a^{-2}\lambda^E_c(L)$ from 
which it follows that $\lambda^E_c(L)=\lambda^E_c(1)L^{-2}$. Therefore, the results in Euclidean space 
are really only interesting for dimensions $d \geq 3.$ Furthermore, in order to obtain sticks that 
can intersect, the sticks where there taken to have width 1. 

A scaling argument similar to the one described for the model in $\BR^2$ above 
does not work 
in the hyperbolic plane. Indeed, for $a>0$ consider the map 
\[
T_a: (\rho(x),\theta(x))\in \BH^2 \to (\rho(x)/a,\theta(x))\in \BH^2
\]
where $(\rho(x),\theta(x))$ denotes $x$ in hyperbolic polar coordinates (see 
also Section \ref{sec:2dhyp}). If we apply this map to the Poisson process 
$\omega^{\lambda,L}$ we will not get a Poisson process which is invariant 
under the isometries 
of ${\mathbb H}^2$. In fact, if $\omega^{\lambda}$ denotes the Poisson point process corresponding 
to the center-points of sticks in $\omega^{\lambda,L}$, it is straightforward to show that the process $T_a(\omega^{\lambda})$ will 
still be a Poisson point process, but not homogeneous. 

We end this section by presenting an outline of the rest of the paper. In Section \ref{sec:modelsanddef},
we give some background on hyperbolic space, and we take more care in defining the model and 
setting notation. 
Section \ref{sec:preliminaries} establishes some preliminary results, while Sections 
\ref{sec:lambdac} and \ref{sec:lambdau} are devoted to the proofs of Theorems \ref{thm:lambdacmain}
and \ref{thm:lambdaumain} respectively.

\section{Models and definitions} \label{sec:modelsanddef}
As we have already seen, we will distinguish Euclidean objects (such as the 
Euclidean metric or $\lambda_c^E$ etc) by using ``$E$'' somewhere in the notation. 
For hyperbolic space, we instead use ``$h$''.

\subsection{$2$-dimensional hyperbolic space} \label{sec:2dhyp}
We let $\BH^2$ denote $2$-dimensional hyperbolic space, represented throughout the paper 
by the Poincar\'e disc model. In this model we consider the unit disc 
$\{x\in \BR^2: d_E(o,x)<1\}$ 
(where $d_E$ denotes $2$-dimensional Euclidean metric) in Euclidean space,
equipped with the hyperbolic metric 
\[
d_h(x,y)=\cosh^{-1}\left(1+2\frac{d_E(x,y)^2}{(1-d_E(o,x)^2)(1-d_E(o,y)^2)}\right).
\]

It will often be useful to represent a point $x\in \BH^2$ in polar coordinates. 
We will then write $x\in \BH^2$ as $(\rho(x),\theta(x))$ where $\rho=\rho(x)=d_h(o,x)$ is 
the hyperbolic distance between the point $x\in \BH^2$ and the origin, 
and the angle $\theta(x)\in [0, 2 \pi)$ is defined 
counter-clockwise from the positive horizontal axis.
On $\BH^2$ we consider the (hyperbolic) area measure $v^h.$ It is known 
(see \cite{S_04}  Chapter 17 equation 17.47 and the preceding pages) that for a set $A\subset \BH^2,$ 
the area $v^h(A)$ can be written as
\begin{equation} \label{eqn:volumemeasureH}
   v^h(A)=\int_A \sinh(\rho) \dd\rho \dd\theta, 
\end{equation}
where $\dd \rho$ is Lebesgue measure on $[0,\infty)$ and $ \dd \theta$ denotes the solid angle element 
which here simplifies (since $d=2$) to Lebesgue measure on $[0,2\pi).$
Throughout, $B^h(x,\rho)=\{y\in \BH^2: d_h(x,y)\leq \rho\}$ will denote a closed 
 ball in 
$\BH^2$ centered at $x$ and with radius $\rho>0.$ We note for future reference that 
\begin{equation} \label{eqn:volhypball}
    v^h(B^h(x,\rho))=2 \pi \int_0^\rho \sinh(r)dr=2 \pi (\cosh(\rho)-1)
    =4\pi \sinh^2(\rho/2).
\end{equation}

We will have use of three hyperbolic trigonometric laws.  
Consider therefore a triangle
with side lengths $A,B,C$ and opposing angles $\alpha,\beta,\gamma.$ The first rule 
is the hyperbolic law of sines (see Chapter 7.12 
of \cite{B_93}) which states that 
\begin{equation} \label{eqn:hypsinerule}
\frac{\sinh(A)}{\sin \alpha}=\frac{\sinh(B)}{\sin \beta}=\frac{\sinh(C)}{\sin \gamma}.
\end{equation}
The second is a hyperbolic law of cosines (see again Chapter 7.12 
of \cite{B_93})
which states that 
\begin{equation} \label{eqn:hypcosinerule}
\cos \alpha= \sin \beta \sin \gamma \cosh(A)-\cos \beta \cos \gamma.   
\end{equation}
The third is another hyperbolic law of cosines (see Chapter 7.10 of \cite{B_93})
which holds in the case that $A,B$ are both infinite
so that $\gamma=0,$ and it states that
\begin{equation}\label{eqn:hypcosinerule_infinite}
\cosh(C)=\frac{1+\cos(\alpha)\cos(\beta)}{\sin(\alpha)\sin(\beta)}.    
\end{equation}

\subsection{Geodesics and Isometries in $\BH^2$}
Given an angle $\theta\in [0,\pi),$ we let $g_\theta$ denote the geodesic 
\begin{equation} \label{eqn:defgeodesic}
  g_\theta:=\{x\in \BH^2: \theta(x)=\theta, \rho(x)<\infty\}
\cup
\{x\in \BH^2: \theta(x)=\theta+\pi, \rho(x)<\infty\}.  
\end{equation}
Since for $x\in \BH^2$ we have that $\theta(x)\in[0,2\pi)$, we will write
$g_{\theta(x) \pmod \pi}$ 
for the geodesic which goes through $x$ and the origin $o.$ We will let $g_1,g_2$ etc denote 
geodesics which do not necessarily pass through $o.$ Of course, there is some clash of notation 
as $g_1$ could be interpreted as $g_{\theta(x)=1},$ but this will be clear from context.

It is well known (see Chapter 7.4 of \cite{B_93}) that the set of isometries on $\BH^2$ are the maps
\[
z \to \frac{az+\bar{c}}{cz+\bar{a}}, \ \ z \to \frac{a\bar{z}+\bar{c}}{c\bar{z}+\bar{a}},
\]
where $|a|^2-|c|^2=1.$ The first set of isometries consists of Möbius transformations,  while the second
set consists of maps which are the result of taking the conjugate $\bar{z},$ and then applying a 
Möbius transformation.
It is also well known that all of these maps are conformal maps and therefore locally 
preserve angles.

We will let $\CI^x$ denote the unique orientation-preserving 
isometry which maps $o$ to $x$ and which leaves 
the geodesic $g_{\theta(x) \pmod \pi}$ invariant, and in addition 
fixes the endpoints on $\partial \BH^2$
of $g_{\theta(x) \pmod \pi}.$ The isometry $\CI^x$  is 
sometimes referred to as a ``translation''.

\subsection{The Poisson stick process} \label{sec:Poistick}
Throughout, we shall let $l_L(x,\phi)$ denote a line segment of length $L$,
centered at $x,$ and where the angle between $g_{\theta(x) \pmod \pi}$ and
$l_L(x,\phi),$ measured counterclockwise starting from $g_{\theta(x) \pmod \pi},$ 
is $\phi\in[0,\pi).$ 
That is, the angle is defined by using the geodesic 
$g_{\theta(x) \pmod \pi}$ passing through $o$ and $x$ as the ``base line''.
More formally, for $\phi\in[0,\pi)$, the line segment $l_L(o,\phi)$ is defined as 
\[
l_L(o,\phi):=\{y\in \BH^2: \theta(y)=\phi, \rho(y)\leq L/2\}
\cup 
\{y\in \BH^2: \theta(y)=\phi+\pi, \rho(y)\leq L/2\}.
\]
It is clearly the case that $l_L(o,\phi) \subset g_\phi$ and that the angle between 
the horizontal line and $l_L(o,\phi)$ is $\phi.$
Next, we define 
\begin{align*}
l_L(x,\phi)& =\CI^x\left(l_L(o,\phi+\theta(x) \pmod \pi)\right) \\
& =\{y\in \BH^2:y=\CI^x(z) \textrm{ for some } z\in l_L(o,\phi+\theta(x) \pmod \pi)\}.
\end{align*}
Informally, we can think of this as first rotating $l_L(o,\phi)$ so that the angle between 
$l_L(o,\phi+\theta(x) \pmod \pi)$ and $g_{\theta(x) \pmod \pi}$ is $\phi,$ and secondly 
we translate $l_L(o,\phi+\theta(x) \pmod \pi)$ along $g_{\theta(x) \pmod \pi}$ so that its 
center becomes $x$. Since by the preceding subsection, the isometry $\CI^x$ 
preserves angles, we see that the angle between $l_L(x,\phi)$ and $g_{\theta(x) \pmod \pi}$
is indeed $\phi.$
Furthermore, since $l_L(o,\phi+\theta(x) \pmod \pi)$ is a subset of the geodesic 
$g_{\phi+\theta(x) \pmod \pi}$, $l_L(x,\phi)$ is also part of a geodesic and 
is therefore in fact a line segment. 
We choose to define 
$l_L(x,\phi)$ in this way out of later convenience.

We will let $\CL^L:=\{l_L(x,\phi): x\in \BH^2, \phi\in[0,\pi)\}$ denote the 
set of all 
line segments in $\BH^2$ of length $L.$ Furthermore, for any $A \subset \BH^2$, we define  
\begin{equation} \label{eqn:defCLA}
\CL^L(A)=\{l_L(x,\theta)\in \CL^L: l_L(x,\theta)\cap A \neq \emptyset\}.    
\end{equation}

Recall the hyperbolic area measure $v^h$ as expressed in \eqref{eqn:volumemeasureH}, and consider the intensity measure $\mu$ defined through 
\begin{equation} \label{eqn:defmud}
    \dd \mu=\dd v^h \otimes \dd\Phi 
\end{equation}
where $\dd \Phi$ denotes uniform probability measure on $[0,\pi).$
We then let $\omega^{\lambda,L}$ be the Poisson point process on the space 
$\BH^2 \times [0,\pi)$ using $\lambda \mu$ 
(where $\lambda>0$ is a parameter) as the intensity 
measure. To any point $(x,\phi)\in \omega^{\lambda,L}$ we identify the line 
segment $l_L(x,\phi).$
Because of this identification, we will sometimes abuse notation somewhat and simply write
$l_L(x,\phi) \in \omega^{\lambda,L}.$ 
For any $\CA \subset \CL^L$ and isometry $\CI,$ we let 
\[
\CI(\CA)=\{\CI(l_L(x,\phi)): l_L(x,\phi)\in \CA\} \subset \CL^L,
\]
where the inclusion holds since $\CI$ is an isometry and therefore preserves distances. 
We have that 
\[
\mu(\CI(\CA))=\mu(\CA),    
\]
which is an easy consequence of the definition \eqref{eqn:defmud}, since $v^h$ is clearly 
invariant under isometries and $\dd \Phi$ is uniform measure on $[0,\pi).$

From now on we will refer to a line segment $l_L(x,\phi)$ as a ``stick'' of length $L,$
and if we talk about a generic stick we shall simply write $l_L.$  Furthermore, 
we will write $l[x,y]$ for a line segment (not necessarily of length $L$) 
with endpoints $x,y\in \BH^2 \cup \partial \BH^2.$ We remark that although a stick 
$l_L(x,\phi)$ is a line segment, it will be convenient to distinguish generic line segments
from sticks that can be part of the stick process $\omega^{\lambda,L}$.

\subsection{The stick process and phase transitions.} \label{sec:stickprocess}
Recall the definition of $\CC=\CC(\omega^{\lambda,L})$ in \eqref{eqn:CCdef} and the definitions
of $\lambda_c(L)$ and $\lambda_u(L)$ in \eqref{eqn:deflambdac}
and \eqref{eqn:deflambdau} respectively.
It was proven in Proposition 2.1 of \cite{BT_15} (see also the references given there) that in a similar context,
any event which is invariant
under isometries, must have probability either 0 or 1. The proof of this fact is
straightforward and based on standard methods, and so we do not repeat 
the argument here. Therefore, for any $L,$ we have that
\[
\BP(\CC(\omega^{\lambda,L}) 
\textrm{ contains an unbounded connected component})\in \{0,1\}.
\]
Furthermore, it is clear that this probability 
is non-decreasing in $\lambda,$ and so we conclude that in fact 
\begin{align*}
    \lambda_c(L) & =\inf\{\lambda>0: \BP(\CC(\omega^{\lambda,L}) 
\textrm{ contains an unbounded connected component})=1)\} \\
& =\sup\{\lambda>0: \BP(\CC(\omega^{\lambda,L}) 
\textrm{ contains an unbounded connected component})=0)\}.
\end{align*}
Furthermore, in the same way,
\[
\BP(\CC(\omega^{\lambda,L}) 
\textrm{ contains a unique unbounded connected component})\in \{0,1\}.
\]
It is not immediate that this probability
is non-decreasing in $\lambda.$ Informally, when $\lambda$ increases, we are adding 
more line segments, and these could potentially form a new, distinct, 
unbounded connected component. However, it is possible to prove that this 
probability is in fact non-decreasing in $\lambda$.
For proofs in very similar circumstances, see Section 5 of \cite{HJ_06} or 
Lemmas 5.4 and 5.5 of \cite{T_07}.

\section{Preliminary results} \label{sec:preliminaries}
The purpose of this section is to establish a few preliminary results which shall be used in later sections 
in order to prove our main results, i.e.\ Theorems \ref{thm:lambdacmain} and \ref{thm:lambdaumain}.

First we consider two points $x,y\in \partial B^h(o,\rho),$ and we let $\varphi\in[0,\pi)$ be the angle 
between them. The following lemma establishes the hyperbolic distance between $x$ and $y.$
\begin{lemma} \label{lemma:xydist}
With $x,y$ and $\varphi$ as above, we have that
\[
d_h(x,y)=2\sinh^{-1}\left(\sinh(\rho) \sin(\varphi/2)\right).
\] 
\end{lemma}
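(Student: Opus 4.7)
The plan is to reduce to a right hyperbolic triangle by exploiting the symmetry of the isoceles triangle $oxy$, so that the hyperbolic sine rule \eqref{eqn:hypsinerule} applies directly and yields the claimed formula.

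First I would set up the triangle $oxy$ with sides $d_h(o,x)=d_h(o,y)=\rho$ and angle $\varphi$ at the vertex $o$. Because this triangle is isoceles, reflection in the geodesic $g_{(\theta(x)+\theta(y))/2 \bmod \pi}$ (which is an isometry of $\BH^2$) fixes $o$ and swaps $x$ and $y$; hence this geodesic is the perpendicular bisector of the side $l[x,y]$, meeting it at the midpoint $m$ and bisecting the apex angle $\varphi$. This gives two congruent right-angled triangles $oxm$ and $oym$.

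Next I would apply the hyperbolic law of sines \eqref{eqn:hypsinerule} to the right triangle $oxm$. Its side opposite the right angle at $m$ is the hypotenuse $ox$ of length $\rho$, while the angle at $o$ equals $\varphi/2$ with opposite side $xm$ of length $d_h(x,y)/2$. The sine rule then reads
\[
\frac{\sinh(d_h(x,y)/2)}{\sin(\varphi/2)} = \frac{\sinh(\rho)}{\sin(\pi/2)} = \sinh(\rho),
\]
so $\sinh(d_h(x,y)/2) = \sinh(\rho)\sin(\varphi/2)$ and the result follows by applying $\sinh^{-1}$ and multiplying by $2$.

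There is no real obstacle in this argument; the only step worth being careful about is justifying why the perpendicular from $o$ to $l[x,y]$ bisects $l[x,y]$ (and why it actually realises a right angle at $m$), but this follows immediately from the reflection isometry above together with the fact that isometries preserve angles and distances. Since the hyperbolic sine rule reduces so cleanly in the right-angled case (thanks to $\sin(\pi/2)=1$), no further hyperbolic trigonometric identity is needed, and in particular neither of the two law-of-cosines variants \eqref{eqn:hypcosinerule} or \eqref{eqn:hypcosinerule_infinite} is required.
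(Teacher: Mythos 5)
Your proof is correct and follows essentially the same route as the paper: both drop the angle bisector from $o$, note it meets $l[x,y]$ orthogonally at the midpoint, and apply the hyperbolic law of sines to the resulting right triangle $oxm$. You simply spell out the reflection-symmetry justification for the perpendicular bisection that the paper dismisses as clear.
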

\begin{proof}
Consider the line segment $l[x,y]$ connecting $x$ and $y,$ 
and let $g=g_{(\theta(x)+\theta(y))/2}$ be the geodesic
bisecting the triangle with corners $o,x,y.$ It is clear that $l[x,y]$ and $g$ intersects orthogonally. 
We next consider the triangle with corners $o,x$ and $l[x,y]\cap g.$ The hyperbolic law of sines 
\eqref{eqn:hypsinerule}
tells us that 
\[
\frac{\sinh(\rho)}{\sin(\pi/2)}=\frac{\sinh(d_h(x,y)/2)}{\sin(\varphi/2)}
\textrm{ so that }
\sinh(d_h(x,y)/2)=\sinh(\rho) \sin(\varphi/2),
\]
from which it follows that 
\[
d_h(x,y)=2\sinh^{-1}\left(\sinh(\rho) \sin(\varphi/2)\right)
\]
as claimed.
\end{proof}

We now consider a set $\CP_N$ consisting of $N$ points $x_0,\ldots, x_{N-1}$
placed equidistantly on $\partial B^h(o,\rho).$ We assume that $\theta(x_0)=0,$
and that all points are placed around the circle $\partial B^h(o,\rho)$ in order
so that $d_h(x_k,x_{k+1})=d_h(x_{N-1},x_0)$ for every $k=0,\ldots,N-2.$ 
\begin{lemma} \label{lemma:ballsoncircle}
If $N=\lceil \pi \sinh(\rho) \rceil,$ then, 
\[
\partial B^h(o,\rho) \subset \bigcup_{k=0}^{N-1} B^h(x_k,1). 
\]
Furthermore, whenever $\rho \geq 3,$ we have that for any $k=0,\ldots,N-3$
\begin{equation} \label{eqn:pointsoncircledist}
  d_h(x_k,x_{k+2})=d_h(x_{N-2},x_0)=d_h(x_{N-1},x_1) >\frac{5}{2}.   
\end{equation}
\end{lemma}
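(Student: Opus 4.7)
The plan is to handle the two claims separately, both via Lemma \ref{lemma:xydist}, with the only real subtlety being the sharpness of the $\sin$-bound needed for the second claim.

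For the first claim, I would observe that every $y \in \partial B^h(o,\rho)$ is at angular distance at most $\pi/N$ from its nearest point $x_k$, so Lemma \ref{lemma:xydist} applied with $\varphi \leq \pi/N$ gives
\[
d_h(y,x_k) \leq 2\sinh^{-1}\!\left(\sinh(\rho)\sin(\pi/(2N))\right).
\]
Using $\sin t \leq t$ together with $N \geq \pi\sinh(\rho)$ (built into $N = \lceil \pi\sinh(\rho)\rceil$) bounds the argument of $\sinh^{-1}$ by $1/2$. It then remains to check $2\sinh^{-1}(1/2) < 1$, equivalently $\sinh(1/2) > 1/2$, which is immediate from the Taylor expansion $\sinh(1/2) = 1/2 + 1/48 + \cdots$.

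For the second claim, the three displayed distances all correspond to pairs of points with identical angular separation $4\pi/N$ (by the rotational symmetry of the equidistant configuration, and the isometry invariance of $d_h$), so Lemma \ref{lemma:xydist} yields
\[
d_h(x_k, x_{k+2}) = 2\sinh^{-1}\!\left(\sinh(\rho)\sin(2\pi/N)\right).
\]
The plan is to bound the argument from below explicitly, using $N \leq \pi\sinh(\rho) + 1$ together with a sharp lower bound on $\sin$ valid on $[0,\pi/2]$, such as $\sin t \geq t\cos t$. This produces
\[
\sinh(\rho)\sin(2\pi/N) \geq \frac{2\pi\sinh(\rho)}{\pi\sinh(\rho) + 1}\cos(2\pi/N).
\]
For $\rho \geq 3$ one has $\sinh(\rho) > 10$, hence $N \geq 32$ and $\cos(2\pi/N) \geq \cos(\pi/16)$, so both factors on the right are bounded below by explicit constants whose product comfortably exceeds $\sinh(5/4) \approx 1.602$. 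Applying $2\sinh^{-1}$ to both sides then delivers $d_h(x_k, x_{k+2}) > 5/2$.

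The main obstacle is the tightness required in the second claim: the easier bound $\sin t \geq 2t/\pi$ yields only $\sinh(\rho)\sin(2\pi/N) \geq 4/(\pi + 1/\sinh(\rho))$, which for $\rho \geq 3$ is approximately $1.23$ and produces a bound of only about $2.07$ on $d_h(x_k, x_{k+2})$, falling short of $5/2$. Hence a sharper lower bound on $\sin$ near $0$, together with the observation that $\rho \geq 3$ forces $N$ to be comfortably large (so that $\cos(2\pi/N)$ is close to $1$), is what closes the numerical margin. Beyond this, the remaining bookkeeping, namely identifying the angular separations and invoking isometry invariance to collapse the three distances into one, is routine.
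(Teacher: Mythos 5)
Your proposal is correct and follows essentially the same approach as the paper: both claims reduce to Lemma \ref{lemma:xydist} combined with the two-sided bound $\pi\sinh(\rho) \leq N \leq \pi\sinh(\rho)+1$ and an elementary estimate on $\sin$. The only cosmetic difference is in the second claim, where you use $\sin t \geq t\cos t$ together with the observation that $\rho\geq 3$ forces $N\geq 32$ (hence $\cos(2\pi/N)\geq\cos(\pi/16)$), whereas the paper uses the simpler bound $\sin t\geq\tfrac{9}{10}t$ valid for $0\leq t\leq\tfrac12$; both close the numerical gap, with yours in fact giving slightly more room.
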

\begin{proof}
The circle $\partial B^h(o,\rho)$ has hyperbolic length
$2 \pi \sinh(\rho).$ Therefore, since there are $N=\lceil \pi \sinh(\rho) \rceil$ points, 
the angle $\varphi$ between two consecutive points in $\CP_N$ becomes 
\begin{equation} \label{eqn:varphibounds}
 \varphi=\frac{2 \pi}{\lceil \pi \sinh(\rho) \rceil}
\in \left[ \frac{2}{\sinh(\rho)+1},\frac{2}{\sinh(\rho)}\right].   
\end{equation}
Consider now $x_0,x_1$ and let $x_{1/2} \in \partial B^h(o,\rho)$ be such that 
$\theta(x_{1/2})=\varphi/2$
so that $x_{1/2}$ is the halfway point between $x_0$ and $x_1$ on $\partial B^h(o,\rho).$ 
Using the elementary inequality $\sin x \leq x$ we then conclude from Lemma \ref{lemma:xydist}
that 
\begin{equation} \label{eqn:phibounds}
   d_h(x_0,x_{1/2}) =2\sinh^{-1}\left(\sinh(\rho) \sin(\varphi/4)\right)
\leq 2\sinh^{-1}\left(\sinh(\rho) \varphi/4\right)  
\leq 2\sinh^{-1}\left(1/2\right)<1  
\end{equation}
where we used \eqref{eqn:varphibounds} in the second inequality, and where
the last inequality can be verified by elementary means. This shows that 
$x_{1/2}\in B^h(x_0,1)$ (and also that $x_{1/2}\in B^h(x_1,1)$) and so we conclude that 
any point on $x\in \partial B^h(o,\rho)$ must be such that $d_h(x,\CP_N)<1.$ This 
proves the first statement of the lemma.

For the second statement, we use the elementary inequality $\sin(x)\geq \frac{9}{10}x$ whenever 
$0\leq x\leq 1/2.$ Therefore, if $\rho \geq 3$ so that 
\[
\varphi\leq \frac{2}{\sinh(3)}<\frac{1}{2},
\]
we can use \eqref{eqn:phibounds} to see that for $x_0,x_{2},$ 
\begin{align*}
d_h(x_0,x_{2}) & =2\sinh^{-1}\left(\sinh(\rho) \sin(\varphi)\right)
\geq 2\sinh^{-1}\left(\sinh(\rho) \frac{9}{10} \varphi \right) \\
& \geq 2\sinh^{-1}\left(\sinh(\rho) \frac{9}{10} \cdot \frac{2}{\sinh(\rho)+1}\right) 
=2\sinh^{-1}\left(\frac{18}{10(1+1/\sinh(\rho))}\right) \\
& \geq 2\sinh^{-1}\left(\frac{18}{10(1+1/\sinh(3))}\right)>\frac{5}{2},
\end{align*}
where again, the last inequality can be verified by elementary means.
This proves the second statement of the lemma.
\end{proof}

\medskip

The last result of this section (Lemma \ref{lemma:altPoisson}) 
concerns the restriction of the Poisson point process
$\omega^{\lambda,L}$ to those sticks $l_L(x,\phi)$ which intersect the positive horizontal line, 
i.e.\ the ray $g_0^+:=\{x\in \BH^2: 0\leq \rho(x)<\infty, \theta(x)=0\}.$ 
We let (as before)
\[
\CL^L(g_o^+):=\{l_L(x,\phi)\in \CL^L: l_L(x,\phi) \cap g_0^+ \neq \emptyset\}
\]
and 
\[
\omega^{\lambda,L}_{|g_o^+}:=\{l_L(x,\phi)\in \omega^{\lambda,L}: l_L(x,\phi) \cap g_0^+ \neq \emptyset\}.
\]
By the restriction theorem (Theorem 5.2 of \cite{LP_18}) $\omega^{\lambda,L}_{|g_o^+}$ is a Poisson 
process with intensity measure $\mu_{|g_o^+}$ defined by 
\[
\mu_{|g_o^+}(A)=\mu(A \cap \CL^L(g_o^+))
\]
for every (measurable) $A \subset \CL^L.$
By understanding the process $\omega^{\lambda,L}_{|g_o^+}$ we will, for example, be able to 
easily calculate the expected number of sticks which hit a certain 
line segment within a certain angle interval. This will be useful on several occasions in the coming 
sections.
The informal interpretation of Lemma \ref{lemma:altPoisson} is that the 
restricted Poisson process $\omega^{\lambda,L}_{|g_o^+}$, can be generated
through the following three steps (see also Figure \ref{fig:restrict}).

\begin{figure}[h]
\vspace*{-10mm}
\centering
\begin{subfigure}{\textwidth}
\centering
\includegraphics[page=1, clip=true, trim=70 150 270 70 , width=12cm]{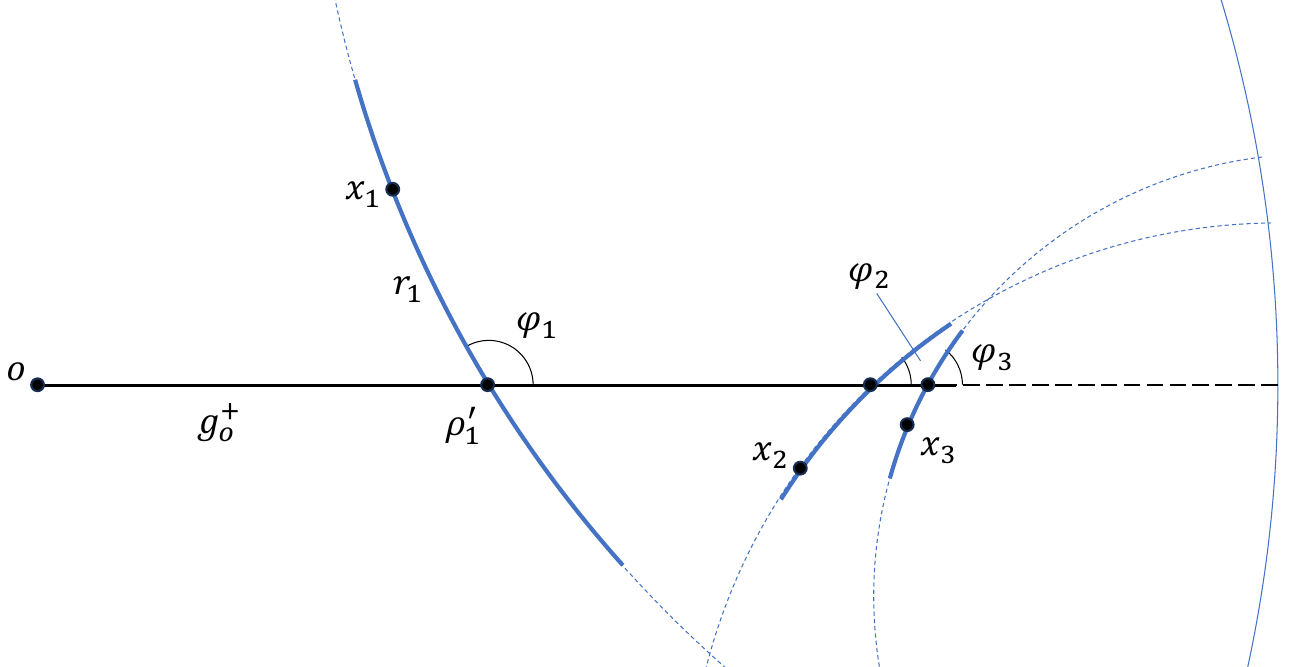} 
\caption{} \label{fig:restrictA}
\end{subfigure}

\begin{subfigure}{\textwidth}
\centering
\includegraphics[page=2, clip=true, trim=70 150 270 70 , width=12cm]{Restrict.pdf} 
\caption{}\label{fig:restrictB}
\end{subfigure}

\caption{Figure \ref{fig:restrictA} shows $g_0^+$ and the first three intersection points of sticks from 
$\omega^{\lambda,L}$ which intersect it. The distance to the first intersection point is 
$\rho'_1$ and this is marked along with 
the three angles $\varphi_1,\varphi_2$ and $\varphi_3$. The solid blue line segments in Figure
\ref{fig:restrictA} are the sets of points 
along the geodesics (dashed) which are at distance at most $L/2$ from the intersection points. 
The centerpoints $x_1,x_2$ and $x_3$ are chosen uniformly along these blue line segments.
Figure \ref{fig:restrictB} shows the resulting sticks in red.} \label{fig:restrict}

\end{figure}

\begin{comment}
\begin{figure} 
\begin{center}
\includegraphics[page=1, clip=true, trim=70 150 270 70 , width=12cm]{Restrict.pdf} 
\caption{The picture shows $g_0^+$ and the first three sticks of $\omega^{\lambda,L}$
which intersects it. The distance to the first intersection point is $\rho'_1$ is marked along with 
the three angles $\varphi_1,\varphi_2$ and $\varphi_3$. The solid blue lines are the sets of points 
along the geodesics (dashed) which are at distance at most $L/2$ from the intersection points.} 
\label{fig:restrict1}   
\end{center}
\end{figure}

\begin{figure} 
\begin{center}
\includegraphics[page=2, clip=true, trim=70 150 270 70 , width=12cm]{Restrict.pdf} 
\caption{The resulting sticks.} \label{fig:restrict2}   
\end{center}
\end{figure}
\end{comment}

\begin{enumerate}
\item First, we take a homogeneous (using the hyperbolic metric $d_h$) Poisson 
point process on $g_0^+$ with intensity $\frac{2}{\pi} \lambda L.$

\item Secondly, at every point $(\rho',0)\in g_0^+$ 
of this point process, 
we place a stick at angle $\varphi\in [0,\pi)$ to $g_0^+$ according to the probability
measure $\frac{1}{2}\sin \varphi \dd \varphi.$ This is done independently for every stick.

\item Thirdly, we determine the location of the center $x\in \BH^2$ of the stick by 
letting the distance $r$ between $x$ and the intersection point 
$(\rho',0)\in g_0^+$ 
of the stick with $g_0^+,$ be picked uniformly in $[-L/2,L/2].$ Again, 
this is done independently for every stick.
\end{enumerate}

We will prove Lemma \ref{lemma:altPoisson} using the Mapping Theorem (see Theorem 5.1 of \cite{LP_18}).
Recall that a stick $l_L(x,\phi)$ is, in 
hyperbolic polar coordinates, determined by the triple $(\rho(x),\theta(x),\phi).$ 
If a stick $l_L(x,\phi)$ intersects $g_0^+,$ then this stick can alternatively 
be represented by the triple 
$(\rho',\varphi,r)$ (using the notation above, see also Figures \ref{fig:restrict} 
and \ref{fig:triangles}). 
\begin{figure}[h] 
\begin{center}
\includegraphics[page=1, clip=true, trim=85 60 600 220 , width=8cm]{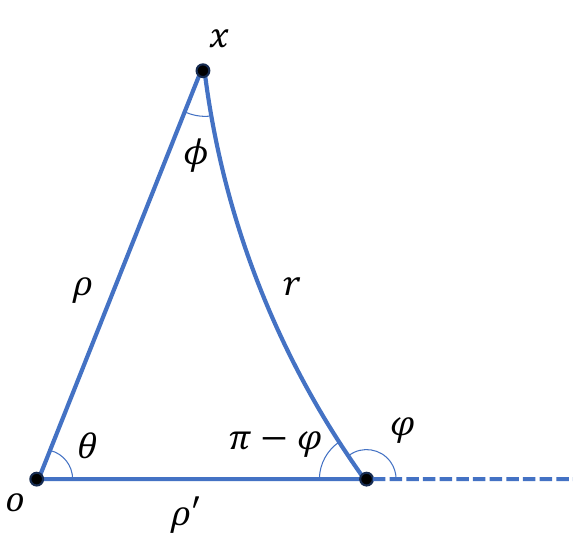} 
\caption{The triangle $\CT$.} \label{fig:triangles}   
\end{center}
\end{figure}
For a stick $l_L(x,\phi)\in \CL^L(g_0^+)$, let $T$ denote the mapping which maps the triple 
$(\rho(x),\theta(x),\phi)$ to the triple 
$(\rho',\varphi,r).$ Clearly, this mapping is 1-to-1 from 
$\{(\rho(x),\theta(x),\phi):l_L(x,\phi)\in \CL(g_0^+)\}$ to 
$\{(\rho',\varphi,r):(\rho',\varphi,r)\in [0,\infty) \times [0,\pi) \times [-L/2,L/2]\}.$ 
According to Theorem 5.1 of \cite{LP_18}, $T\left(\omega_{g_0^+}^{\lambda,L}\right)$ 
is a Poisson point process
on the space $[0,\infty) \times [0,\pi) \times [-L/2,L/2] $ with intensity 
measure $\mu'=T(\mu)$ defined by the relation 
\begin{equation} \label{eqn:muprimemurelation}
  \mu'(\CA)=T(\mu)(\CA)=\mu(T^{-1}(\CA))  
\end{equation}
for every measurable event $\CA.$ Our following result describes the measure $\mu'$ in terms of 
$(\rho',\varphi,r).$ We also note that this result can be extended to include the rest of the 
geodesic $g_0.$ Here and below, we let $I(\cdot)$ denote an indicator function.

\begin{lemma} \label{lemma:altPoisson}
The measure $\mu'$ is given by 
\[
\dd \mu'= \frac{1}{\pi}I(\rho'>0)\dd \rho' 
\otimes I(\varphi\in[0,\pi))\sin \varphi \dd \varphi
\otimes I(r\in[-L/2,L/2])\dd r.
\]
\end{lemma}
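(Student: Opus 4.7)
The plan is to apply the Mapping Theorem (Theorem 5.1 of \cite{LP_18}) and verify the claimed pushforward density via a change of variables. In the source coordinates $\dd\mu = \sinh\rho(x)\,\dd\rho(x)\,\dd\theta(x) \otimes \tfrac{1}{\pi}\dd\phi$, so by \eqref{eqn:muprimemurelation} the lemma reduces to the identity
\[
\sinh\rho(x)\cdot\left|\det\frac{\partial(\rho,\theta,\phi)}{\partial(\rho',\varphi,r)}\right| = \sin\varphi,
\]
with the factor $1/\pi$ inherited from the uniform angle measure in \eqref{eqn:defmud}.

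To verify this identity, I consider the hyperbolic triangle $\CT$ with vertices $o$, $P=(\rho',0)$ and $x$ shown in Figure~\ref{fig:triangles}, whose sides have lengths $\rho'$, $|r|$, $\rho(x)$ and whose angles are $\theta(x)$ at $o$, $\pi-\varphi$ at $P$ (treating $r>0$, with the $r<0$ case reduced to this by a reflection isometry that preserves $|J|$), and $\psi$ at $x$ with $\sin\phi=\sin\psi$ under the orientation convention of Section~\ref{sec:Poistick}. Applying the side-form of the law of cosines \eqref{eqn:hypcosinerule} at $P$ and the law of sines \eqref{eqn:hypsinerule} twice yields the implicit system
\begin{align*}
\cosh\rho &= \cosh\rho'\cosh r + \sinh\rho'\sinh r\cos\varphi,\\
\sinh\rho\sin\theta &= \sinh r\sin\varphi,\\
\sinh\rho\sin\phi &= \sinh\rho'\sin\varphi,
\end{align*}
and I use the implicit function theorem to compute $|J|$ without inverting $T$. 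Defining $F_1,F_2,F_3$ as these three equations set equal to zero, the denominator $|\det\partial F/\partial(\rho,\theta,\phi)|$ is immediate, namely $\sinh^3\rho\cos\theta\cos\phi$, while the $3\times 3$ numerator $|\det\partial F/\partial(\rho',\varphi,r)|$ can be expanded and then simplified with repeated use of $\cosh^2-\sinh^2=1$ and the law of cosines applied at the other two vertices (which supply closed forms for $\cos\theta$ and $\cos\phi$ in terms of the side-lengths). The ratio simplifies to $\sin\varphi/\sinh\rho$, completing the proof.

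The main obstacle is the algebraic book-keeping in the final simplification: the expanded numerator is a sum of products of four hyperbolic functions in $(\rho',r)$ with trigonometric coefficients in $\varphi$, and the critical cancellations only become transparent once one substitutes for $\cos\theta,\cos\phi$ from the dual forms of \eqref{eqn:hypcosinerule}. A conceptually cleaner alternative, which I would sketch as a cross-check, is to factor $T$ through the space of geodesic lines: $\mu$ is an isometry-invariant measure on (line, center-position-on-line) pairs, so it disintegrates as a constant multiple of the classical kinematic density $\sin\varphi\,\dd\rho'\dd\varphi$ on geodesics crossing $g_0^+$ tensored with arc length $\dd r$ along each line; the constant $1/\pi$ is then pinned down by equating the induced intersection intensity $2L/\pi$ on $g_0^+$ with the same quantity computed directly from $\dd\mu = \dd v^h\otimes \tfrac{1}{\pi}\dd\phi$.
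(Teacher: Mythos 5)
Your proposal is correct in substance but takes a genuinely different route from the paper's proof. The paper explicitly declines the Jacobian calculation (``this turns out to be rather involved'') and instead argues by sandwiching $\mu'$ of a small box $[0,\epsilon)\times[\varphi_1,\varphi_2]\times[r_1,r_2]$ between explicit upper and lower bounds on the $\mu$-measure of its $T$-preimage, carefully tracking $O(\epsilon)$ errors in each of $\rho,\theta,\phi$ and then letting $\epsilon\to 0$. Your approach—implicit differentiation of the three trigonometric constraints $\cosh\rho=\cosh\rho'\cosh r+\sinh\rho'\sinh r\cos\varphi$, $\sinh\rho\sin\theta=\sinh r\sin\varphi$, $\sinh\rho\sin\phi=\sinh\rho'\sin\varphi$—does work. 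I carried out the algebra you deferred: the denominator is lower-triangular with $\det\partial F/\partial(\rho,\theta,\phi)=\sinh^3\rho\cos\theta\cos\phi$ as you say, and the numerator expands to $\sin\varphi\bigl[\cos\varphi(\sinh^2 r\cosh^2\rho'+\cosh^2 r\sinh^2\rho')+\sinh r\cosh r\sinh\rho'\cosh\rho'(1+\cos^2\varphi)\bigr]$; substituting the law of cosines at $o$ and at $x$ for $\cos\theta$ and $\cos\phi$ and factoring $\cosh\rho\cosh\rho'-\cosh r=\sinh\rho'(\cosh r\sinh\rho'+\cosh\rho'\sinh r\cos\varphi)$ (and the analogous identity for $\cosh\rho\cosh r-\cosh\rho'$) shows the two expressions agree, giving $|\det\partial(\rho,\theta,\phi)/\partial(\rho',\varphi,r)|=\sin\varphi/\sinh\rho$ exactly as you claim. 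What the paper's route buys is that it never requires this identity; what your route buys is a much shorter proof once the determinant is simplified, and it makes the kinematic-density interpretation transparent, which your closing remark correctly identifies as the conceptual content. Two small inaccuracies to fix: \eqref{eqn:hypcosinerule} in the paper is the \emph{angle} form of the law of cosines, not the side form you invoke (the side form is not among the paper's listed identities, so you should state it); and there is no need for the auxiliary angle $\psi$—the angle of $\CT$ at $x$ is literally $\phi$, as the paper's area formula $v^h(\CT)=\pi-(\theta+(\pi-\varphi)+\phi)$ shows.
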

\begin{proof}
By standard methods, it suffices to show that for any event of the form 
\[
\CA=[a,b] \times [\varphi_1,\varphi_2]\times [r_1,r_2] ,
\]
where $0\leq a<b<\infty,\ 0\leq \varphi_1<\varphi_2<\pi$ and $-L/2<r_1<r_2<L/2,$ 
we have that 
\begin{equation} \label{eqn:muprimeA_org} 
\mu'(\CA)=\frac{1}{\pi}
\int_{a}^b \int_{\varphi_1}^{\varphi_2} \int_{r_1}^{r_2} 
\sin \varphi  \dd r \dd \varphi  \dd \rho'.    
\end{equation}
Furthermore, by invariance of the measure $\mu,$ we see that for $0<r_1<r_2<L/2$ we must have that
\[
\mu(T^{-1}([a,b] \times [\varphi_1,\varphi_2]\times [r_1,r_2] ))
=\mu(T^{-1}([a,b] \times [\varphi_1,\varphi_2]\times [-r_1,-r_2] )),
\]
and therefore, it suffices to show \eqref{eqn:muprimeA_org} for $0<r_1<r_2<L/2$
(note that the contribution of $r_1=0$ to \eqref{eqn:muprimeA_org} must be 0).
Furthermore, if we let $\epsilon>0$ be such that $(b-a)/\epsilon$ is an integer,
then we have that 
\begin{align*}
& \mu(T^{-1}([a,b] \times [\varphi_1,\varphi_2]\times [r_1,r_2] )) \\
& =\sum_{k=0}^{\frac{b-a}{\epsilon}-1}  
\mu(T^{-1}([k\epsilon,(k+1)\epsilon) \times [\varphi_1,\varphi_2]\times [r_1,r_2] )) \\
& =\frac{b-a}{\epsilon}\mu(T^{-1}([0,\epsilon) 
\times [\varphi_1,\varphi_2] \times [r_1,r_2] )), 
\end{align*}
by using that the contribution of a single point ($\rho'=b$) is 0 and the invariance of $\mu.$

Consider now $\CA$ as above with $r_1>0.$
Using \eqref{eqn:muprimemurelation}, the definition of 
$\mu$ (i.e.\ \eqref{eqn:defmud}) and \eqref{eqn:volumemeasureH},
we see that 
\begin{align} \label{eqn:muprimeA}
    & \mu'(\CA)=\mu(T^{-1}(\CA)) 
    =\frac{b-a}{\epsilon}\mu(T^{-1}([0,\epsilon) \times [\varphi_1,\varphi_2]\times [r_1,r_2] )),\\
& =\frac{b-a}{\epsilon }\int_{\rho=0}^\infty \int_{\theta=0}^{2 \pi} \int_{\phi=0}^{\pi} 
I((\rho,\theta,\phi)\in T^{-1}([0,\epsilon) \times [\varphi_1,\varphi_2]\times [r_1,r_2] )) 
\frac{1}{\pi}\dd \phi \dd \theta \sinh(\rho) \dd \rho, 
\nonumber 
\end{align}
since $\dd \Phi=\frac{1}{\pi}I(0\leq \phi< \pi)\dd \phi.$
It is tempting to attempt to 
show that \eqref{eqn:muprimeA} equals \eqref{eqn:muprimeA_org} by a change of 
variables and calculating the Jacobian. However, this turns out to be rather 
involved and instead we opt for a more direct approach which will be simplified by 
taking $\epsilon>0$ small.
Indeed, we will proceed by estimating the right hand side of \eqref{eqn:muprimeA},
and then we will take the limit as $\epsilon \to 0.$ We therefore need to 
understand what the condition
\[
(\rho',\varphi,r)\in [0,\epsilon)\times [\varphi_1,\varphi_2]\times [r_1,r_2]
\]
implies for $(\rho,\theta,\phi)= T^{-1}(\rho',\varphi,r)$. 
For example, the condition 
that $\rho'=\rho'(\rho,\theta,\phi)\in[0,\epsilon)$ will mean that the integrand 
of \eqref{eqn:muprimeA} is 0 for certain values of $(\rho,\theta,\phi).$
In order to understand $T^{-1}(\CA)$, consider a fixed $r_1>0$ and 
the triangle $\CT$ defined by the points $o,(\rho',0)$ and $(\rho,\theta),$
see Figure \ref{fig:triangles}. 
We will also assume that $\epsilon>0$ is much smaller 
that $r_1>0.$ 

For $\rho,$ we note that by the triangle inequality 
we have that $\rho-\rho' \leq r \leq \rho +\rho'.$ Furthermore, $\rho'<\epsilon$ 
by assumption which implies that $\rho-\epsilon \leq r \leq \rho +\epsilon.$ Thus
\begin{equation} \label{eqn:rhoimpliedbounds}
(\rho',\varphi,r)\in[0,\epsilon) \times [\varphi_1,\varphi_2]
\times [r_1,r_2] 
\Rightarrow \rho\in[r_1-\epsilon,r_2+\epsilon] .
\end{equation}

Next, we establish similar bounds for $\theta.$ Recall the notation 
$v^h$ from \eqref{eqn:volumemeasureH}.
It is well known (see \cite{B_93} p 150) that 
the area of the hyperbolic 
triangle $\CT$ is given by the expression (with notation as in Figure \ref{fig:triangles})
\begin{equation}\label{eqn:triarea}
v^h(\CT)=\pi-(\theta+(\pi-\varphi)+\phi)=\varphi-\theta-\phi.  
\end{equation}
Since $v^h(\CT)> 0$, we conclude that 
\begin{equation} \label{eqn:thetaupperbound}
    \theta<\varphi.
\end{equation}
The lower bound for $\theta$ is a bit more involved. 
First, we note that $\CT$ can be covered by 
\begin{equation}\label{eqn:CTcover}
\left\lceil \frac{r}{\epsilon} \right\rceil
+1\leq \left\lceil \frac{r_2}{\epsilon} \right\rceil+1
\leq \frac{r_2}{\epsilon}+2    
\end{equation}
balls of radius $2\epsilon$ as we now somewhat informally explain. Let $l[o,x]$ denote 
the line between $o$ and $x$, and let $l[(\rho',0),x]$ denote the line between 
$(\rho',0)$ and $x.$ It is easy to show that the distance from any point $y\in l[o,x]$ 
to $l[(\rho',0),x]$ is smaller than $\rho'\leq \epsilon$ by considering the line segment
intersecting $y$ with angle $\theta$ to $l[o,x]$ (see Figure \ref{fig:triangles2}). 
\begin{figure} 
\begin{center}
\includegraphics[page=2, clip=true, trim=85 60 600 220 , width=8cm]{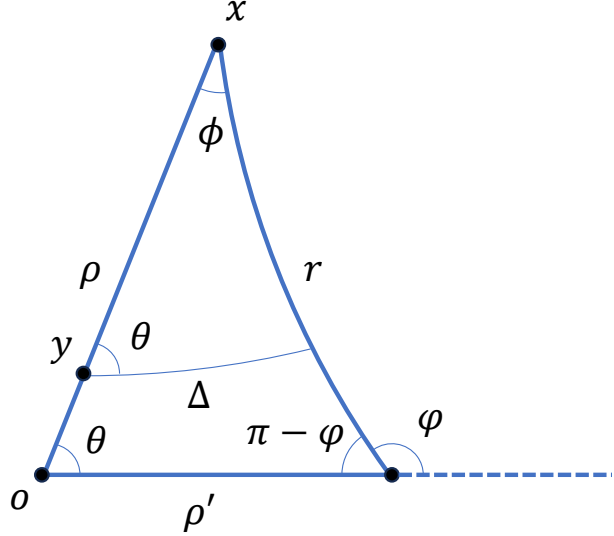} 
\caption{An illustration of why $\Delta\leq \epsilon$.} \label{fig:triangles2}   
\end{center}
\end{figure}
Letting the 
distance between $y$ and the intersection point between $l[o,x]$ and this line segment 
be denoted
by $\Delta$, we can use the hyperbolic law of sines \eqref{eqn:hypsinerule} to see that 
\[
\frac{\sinh(\Delta)}{\sin(\phi)}
\leq \frac{\sinh(r)}{\sin(\theta)}
=\frac{\sinh(\rho')}{\sin(\phi)}
\]
so that $\Delta\leq \rho'\leq \epsilon$. This shows that 
$d_h(y,l[(\rho',0),x])\leq \epsilon$ for any $y\in l[o,x],$ and a similar argument gives 
that $d_h(y,l[o,x])\leq \epsilon$ for any $y\in l[(\rho',0),x].$
Therefore, by placing balls of radius $2\epsilon$ along $l[(\rho',0),x]$ 
and with distance $\epsilon$ between the centers of consecutive balls, 
we obtain a covering of $\CT,$ and so \eqref{eqn:CTcover} follows.

Continuing, we can use \eqref{eqn:volhypball} that the hyberbolic area 
of a ball of radius $r$ is
$v^h((B^h(o,r)))=4 \pi \sinh^2(r/2)$,
to conclude that for $\epsilon>0$ small enough,
\[
v^h(\CT) \leq (r_2/\epsilon+2)v^h(B^h(o,2\epsilon))
=(r_2/\epsilon+2)(4 \pi \sinh^2(\epsilon)) 
\leq (r_2/\epsilon+2)16\pi \epsilon^2
\leq (r_2+1)16\pi \epsilon,  
\]
since $\sinh(x)\leq 2x$ for any $x>0$ small enough.
Inserting this into \eqref{eqn:triarea} we obtain
\begin{equation} \label{eqn:thetalowerbound_prel}
\varphi-\theta-\phi \leq (r_2+1)16\pi  \epsilon
\Rightarrow \theta\geq \varphi-\phi-(r_2+1)16\pi  \epsilon.    
\end{equation}
In order to get a working lower bound of $\theta,$ we therefore need to find an upper 
bound of $\phi.$ To this end, we note that by the hyperbolic sine law (i.e. \eqref{eqn:hypsinerule})
\[
\frac{\sinh(r)}{\sin(\theta)}=\frac{\sinh(\rho')}{\sin (\phi)}
\leq \frac{\sinh(\epsilon)}{\sin (\phi)}
\Rightarrow \sin(\phi)\leq \sinh(\epsilon) \frac{\sin(\theta)}{\sinh(r)}.
\]
We then use the elementary inequalities $\sinh(\epsilon)\leq \epsilon+\epsilon^2$
and $\arcsin(\epsilon)\leq \epsilon+\epsilon^2$
for $\epsilon>0$ small enough, to obtain
\begin{align} \label{eqn:phiuppest}
 \phi & \leq \arcsin\left(\sinh(\epsilon)\frac{\sin(\theta)}{\sinh(r)}\right)   
\leq \arcsin\left((\epsilon+\epsilon^2)
\frac{\sin(\theta)}{\sinh(r)}\right)\\
& \leq (\epsilon+\epsilon^2)\frac{\sin(\theta)}{\sinh(r)}
+\left((\epsilon+\epsilon^2)\frac{\sin(\theta)}{\sinh(r)}\right)^2
\leq \epsilon\frac{2}{\sinh(r_1)}, \nonumber
\end{align}
whenever $\epsilon>0$ is small enough and since $r\geq r_1>0$ by assumption. 
Inserting this into \eqref{eqn:thetalowerbound_prel} we get that
\begin{equation} \label{eqn:thetalowerbound}
\theta>\varphi-\phi-(r_2+1)16\pi \epsilon \geq \varphi-C_1(r_1,r_2) \epsilon  
\end{equation}
where $C_1(r_1,r_2)=\frac{2}{\sinh(r_1)}+(r_2+1)16\pi$ is a constant which only depends on 
(our fixed choice of) $r_1$ and $r_2.$
We can then conclude from \eqref{eqn:thetaupperbound} and \eqref{eqn:thetalowerbound} that 
$\varphi-C_1(r_1,r_2) \epsilon \leq \theta \leq \varphi$ for $\epsilon>0$ small enough,
and therefore we have that
\begin{equation} \label{eqn:thetaimpliedbounds}
(\rho',\varphi,r)\in[0,\epsilon) \times [\varphi_1,\varphi_2]
\times [r_1,r_2]  
\Rightarrow \theta\in[\varphi_1-C_1(r_1,r_2)\epsilon,\varphi_2].  
\end{equation}

Finally, we address $\phi.$ To this end, note that for fixed $\rho,$ 
we can again use that $\rho-\epsilon \leq r \leq \rho +\epsilon$ to 
see that by the mean value theorem,
\[
\left|\frac{\sinh(\rho)}{\sinh(r)}-1\right|
=\frac{1}{\sinh(r)} |\sinh(\rho)-\sinh(r)|
=\frac{\cosh(\xi)}{\sinh(r)}|\rho-r|
\leq\frac{\cosh(\xi)}{\sinh(r)} \epsilon
\]
for some $\xi\in [r-\epsilon,r+\epsilon].$ Since $r>r_1$ 
and $\cosh(x)$ is an increasing function for $x>0,$ we conclude that for 
$\epsilon>0$ small enough,
\begin{align} \label{eqn:sinhratio}
\left|\frac{\sinh(\rho)}{\sinh(r)}-1\right|
& \leq\frac{\cosh(r+\epsilon)}{\sinh(r)} \epsilon
=\frac{\cosh(r) \cosh(\epsilon)+\sinh(r)\sinh(\epsilon)}{\sinh(r)} \epsilon \\
& \leq 2\coth(r)\epsilon+2\epsilon^2 
\leq 2\coth(r_1)\epsilon+2\epsilon^2 
\leq 3\coth(r_1)\epsilon
\nonumber
\end{align}
by using the easily verifiable fact that $\coth(r)=\cosh(r)/\sinh(r)$ is decreasing in $r>0$
and that $r>r_1$ by assumption. We also used that $\cosh(\epsilon)\leq 2$ and that 
$\sinh(\epsilon)\leq 2 \epsilon$ for $\epsilon>0$ small enough.
Next, as in \eqref{eqn:phiuppest} we have that 
\begin{align*} 
 \phi& \leq (\epsilon+\epsilon^2)\frac{\sin(\theta)}{\sinh(r)}
+\left((\epsilon+\epsilon^2)\frac{\sin(\theta)}{\sinh(r)}\right)^2
\leq \epsilon\frac{\sin(\theta)}{\sinh(r)} +\epsilon^{2}\frac{\sin(\theta)}{\sinh(r)} 
+4\epsilon^2\left(\frac{\sin(\theta)}{\sinh(r)} \right)^2\\
& \leq \epsilon\frac{\sin(\theta)}{\sinh(r)}
+\left(1+\frac{4}{\sinh(r_1)}\right)\frac{\epsilon^2}{\sinh(r)}
=\left(\epsilon\frac{\sin(\theta)}{\sinh(\rho)}
+\left(1+\frac{4}{\sinh(r_1)}\right)\frac{\epsilon^2}{\sinh(\rho)}\right)\frac{\sinh(\rho)}{\sinh(r)}
\nonumber \\
& \leq \left(\epsilon\frac{\sin(\theta)}{\sinh(\rho)}
+\left(1+\frac{4}{\sinh(r_1)}\right)\frac{\epsilon^2}{\sinh(\rho)}\right)
\left(1+3\coth(r_1)\epsilon\right) \\
& \leq  \epsilon\frac{\sin(\theta)}{\sinh(\rho)}
+\frac{\epsilon^2}{\sinh(\rho)}\left(1+\frac{4}{\sinh(r_1)}+3\coth(r_1)\right)
+\frac{\epsilon^3}{\sinh(\rho)}3\coth(r_1)\left(1+\frac{4}{\sinh(r_1)}\right)
\nonumber \\
& \leq \epsilon\frac{\sin(\theta)}{\sinh(\rho)}
+\frac{\epsilon^2}{\sinh(\rho)}\left(2+\frac{4}{\sinh(r_1)}+3\coth(r_1)\right)
=\epsilon\frac{\sin(\theta)}{\sinh(\rho)}
+\frac{\epsilon^2C_2(r_1)}{\sinh(\rho)}
\end{align*}
for $\epsilon>0$ small enough, and where we used \eqref{eqn:sinhratio} in the fourth inequality.
Here, 
\[
C_2(r_1)=2+\frac{4}{\sinh(r_1)}+3\coth(r_1).
\]
We conclude that for $\epsilon>0$ small enough,
\begin{equation} \label{eqn:phiimpliedbounds}
(\rho',\varphi,r)\in[0,\epsilon)\times [\varphi_1,\varphi_2] \times [r_1,r_2]  
\Rightarrow 
\phi\in\left[0,\epsilon\frac{\sin(\theta)}{\sinh(\rho)}+\frac{\epsilon^2C_2(r_1)}{\sinh(\rho)}\right].  
\end{equation}
Using \eqref{eqn:rhoimpliedbounds}, \eqref{eqn:thetaimpliedbounds} and 
\eqref{eqn:phiimpliedbounds} we can now conclude that if
\[
(\rho',\varphi,r)\in [0,\epsilon] \times [\varphi_1,\varphi_2]\times [r_1,r_2]  
\]
then for $\epsilon>0$ small enough,
\[
(\rho,\theta,\phi) =T^{-1}((\rho',\varphi,r)) 
\in [r_1-\epsilon,r_2+\epsilon]\times [\varphi_1-C_1(r_1,r_2)\epsilon,\varphi_2] 
\times \left[0,\epsilon\frac{\sin(\theta)}{\sinh(\rho)} 
+\frac{C_2(r_1)\epsilon^{2}}{\sinh(\rho)} \right]. \nonumber
\]
Inserting this into \eqref{eqn:muprimeA} we obtain
\begin{align*}
& \mu'(\CA) 
=\frac{(b-a)}{\epsilon}
\int_{\rho=0}^\infty  \int_{\theta=0}^{2 \pi} \int_{\phi=0}^{\pi} 
I((\rho,\theta,\phi)\in T^{-1}([0,\epsilon) \times [\varphi_1,\varphi_2]\times [r_1,r_2] )) 
\frac{1}{\pi}\dd \phi \dd \theta \sinh(\rho) \dd \rho \nonumber \\
& \leq \frac{(b-a)}{\epsilon}
\int_{\rho=r_1-\epsilon}^{r_2+\epsilon}  \int_{\theta=\varphi_1-C_1(r_1,r_2)\epsilon}^{\varphi_2}  
\int_{\phi=0}^{\epsilon\frac{\sin(\theta)}{\sinh(\rho)} +\frac{C_2(r_1)\epsilon^2}{\sinh(\rho)} } 
\frac{1}{\pi}\dd \phi \dd \theta \sinh(\rho) \dd \rho \nonumber \\
& =\frac{(b-a)}{\epsilon}\frac{1+O(\epsilon)}{\pi}
\int_{\rho=r_1}^{r_2}  \int_{\theta=\varphi_1}^{\varphi_2}  
\left(\epsilon\frac{\sin(\theta)}{\sinh(\rho)} +\frac{C_2(r_1)\epsilon^2}{\sinh(\rho)}  \right)
\dd \theta \sinh(\rho) \dd \rho \nonumber \\
& =\frac{(b-a)}{\pi}\int_{\rho=r_1}^{r_2}  \int_{\theta=\varphi_1}^{\varphi_2}  \sin(\theta) \dd \theta \dd \rho 
+O(\epsilon), \nonumber
\end{align*}
so that by taking the limit as $\epsilon\to 0,$ 
we get that 
\begin{equation} \label{eqn:muprimeAupper}
\mu'(\CA)\leq \frac{(b-a)}{\pi}\int_{\rho=r_1}^{r_2}  
\int_{\theta=\varphi_1}^{\varphi_2} 
\sin(\theta) \dd \theta \dd \rho 
=\frac{1}{\pi}\int_{\rho'=a}^b \int_{r=r_1}^{r_2} 
\int_{\varphi=\varphi_1}^{\varphi_2} 
\sin \varphi \dd \varphi \dd r \dd \rho'.    
\end{equation}

In order to show \eqref{eqn:muprimeA_org}, we need to find the matching lower bound. 
Unfortunately, we cannot rely on much of what we have already done, since the bounds 
established above often rely on conditions (such as $\rho'\leq \epsilon$) which we can no 
longer assume. However, the ideas are very similar. Consider
\begin{equation} \label{eqn:rhothetaphiassumption}
 (\rho,\theta,\phi)
\in [r_1+\epsilon,r_2-\epsilon] \times [\varphi_1,\varphi_2-C_3\epsilon] 
\times \left[0,\epsilon\frac{\sin(\theta)}{\sinh(\rho)} -C_4 \epsilon^2\right],
\end{equation}
where
\[
C_3=C_3(r_1,\varphi_1,\varphi_2)
=\frac{1+\cosh(r_1)}{\sinh(r_1)}\frac{4}{\min(\sin(\varphi_1),\sin(\varphi_2))},
\]
and where 
\[
C_4=C_4(r_1,\varphi_1,\varphi_2)
=\frac{2C_3}{\min(\sin(\varphi_1),\sin(\varphi_2))\sinh(r_1)}.
\]
We claim that for $\epsilon>0$ small enough, this implies that 
\begin{equation} \label{eqn:rhoprimerphi}
(\rho',\varphi,r)
\in [0,\epsilon)\times [\varphi_1,\varphi_2] \times [r_1,r_2] .
\end{equation}
Inserting this into \eqref{eqn:muprimeA} will give us that for $\epsilon>0$ small enough,
\begin{align*}
& \mu'(\CA)=\frac{(b-a)}{\epsilon}
\int_{\rho=0}^\infty  \int_{\theta=0}^{2 \pi} \int_{\phi=0}^{\pi} 
I((\rho,\theta,\phi)\in T^{-1}([0,\epsilon) \times [\varphi_1,\varphi_2] \times [r_1,r_2])) 
\frac{1}{\pi}\dd \phi \dd \theta \sinh(\rho) \dd \rho \nonumber \\
& \geq \frac{(b-a)}{\epsilon}
\int_{\rho=r_1+\epsilon}^{r_2-\epsilon}
\int_{\theta=\varphi_1}^{\varphi_2-C_3\epsilon}  
\int_{\phi=0}^{\epsilon\frac{\sin(\theta)}{\sinh(\rho)}-C_4 \epsilon^2}
\frac{1}{\pi}\dd \phi \dd \theta \sinh(\rho) \dd \rho \nonumber \\
& =\frac{(b-a)}{\epsilon}\frac{1+O(\epsilon)}{\pi}
\int_{\rho=r_1}^{r_2}  \int_{\theta=\varphi_1}^{\varphi_2}  
\left(\epsilon\frac{\sin(\theta)}{\sinh(\rho)} -C_4 \epsilon^2\right)
\dd \theta \sinh(\rho) \dd \rho \nonumber \\
& =\frac{(b-a)}{\pi}\int_{\rho=r_1}^{r_2}  \int_{\theta=\varphi_1}^{\varphi_2}  \sin(\theta) \dd \theta \dd \rho 
-O(\epsilon), \nonumber
\end{align*}
so that by taking the limit as $\epsilon\to 0,$ 
we get that 
\[
\mu'(\CA)\geq \frac{(b-a)}{\pi}\int_{\rho=r_1}^{r_2}  
\int_{\theta=\varphi_1}^{\varphi_2} 
\sin(\theta) \dd \theta \dd \rho 
=\frac{1}{\pi}\int_{\rho'=a}^b \int_{r=r_1}^{r_2} 
\int_{\varphi=\varphi_1}^{\varphi_2} 
\sin \varphi \dd \varphi \dd r \dd \rho'.  
\]
This, together with \eqref{eqn:muprimeAupper}, proves \eqref{eqn:muprimeA} and thereby the statement. 
What is left is therefore to show that
\eqref{eqn:rhothetaphiassumption} implies \eqref{eqn:rhoprimerphi}.

To that end, we assume \eqref{eqn:rhothetaphiassumption} and start by considering $\varphi.$ 
As before, $\theta\leq \varphi$ and so \eqref{eqn:rhothetaphiassumption} implies that 
$\varphi \geq \varphi_1.$ Furthermore, 
if we apply the hyperbolic law of cosines \eqref{eqn:hypcosinerule} to $\CT,$ we see that 
\[
\cos(\varphi) =-\cos(\pi-\varphi) 
=\cos(\theta)\cos(\phi)-\sin(\theta)\sin(\phi)\cosh(\rho).   
\]
Therefore, using that $\phi\leq \epsilon \frac{\sin(\theta)}{\sinh(\rho)}
\leq \epsilon \frac{1}{\sinh(r_1)}$ by our assumption 
\eqref{eqn:rhothetaphiassumption} and the fact that $\cos(x)\geq 1-x^2/2$ for $x>0$
small enough, we see that for $\epsilon>0$ small enough,
\begin{align} \label{eqn:cosvarphithetaineq1}
|\cos(\varphi)-\cos(\theta)|
& \leq |\cos(\theta)|\cdot |\cos(\phi)-1| +|\sin(\theta)\sin(\phi)\cosh(\rho)| \\
& \leq \frac{\phi^2}{2}+\phi\cosh(\rho)
\leq \frac{\epsilon^2}{2}\left(\frac{\sin(\theta)}{\sinh(\rho)}\right)^2
+\epsilon\sin(\theta)\coth(\rho)\\
& < \epsilon \frac{1}{\sinh(r_1)}+\epsilon\coth(r_1) 
= \frac{1+\cosh(r_1)}{\sinh(r_1)}\epsilon, \nonumber
\end{align}
where we use the fact that $\coth(\rho)$ is decreasing in $\rho$ and that 
$\rho>r_1$ by assumption.
Next, observe that since $\theta\in[\varphi_1,\varphi_2-C_3 \epsilon]$ by assumption, 
it follows that
\begin{equation} \label{eqn:minsin}
\sin(\theta) \geq \min(\sin(\varphi_1),\sin(\varphi_2-C_3 \epsilon))
\geq \min(\sin(\varphi_1),\sin(\varphi_2))    
\end{equation}
for every $\epsilon>0$ small enough. Indeed, this is easily verified by considering 
the cases $\varphi_2>\pi/2$ and $\varphi_1<\varphi_2\leq \pi/2$ separately.
Furthermore, we can use that $\cos(x)$ is decreasing for $x\in[0,\pi)$
and the easily verified fact that $\varphi\leq \pi$ whenever $\theta\leq \pi$ (see Figure 
\ref{fig:triangles}) to see that
if $\varphi>\theta+C_3\epsilon,$ then for $\epsilon>0$ small enough,
\begin{align} \label{eqn:cosvarphithetaineq1contradict}
&|\cos(\varphi)-\cos(\theta)| \\
&=\cos(\theta)-\cos(\varphi)
>\cos(\theta)-\cos(\theta+C_3\epsilon) 
=\cos(\theta)-\cos(\theta)\cos(C_3\epsilon)+\sin(\theta)\sin(C_3\epsilon) \nonumber \\
& \geq -(1-\cos(C_3\epsilon))+\min(\sin(\varphi_1),\sin(\varphi_2))\frac{C_3\epsilon}{2} 
\geq  -\frac{(C_3 \epsilon)^2}{2}+\min(\sin(\varphi_1),\sin(\varphi_2))\frac{C_3\epsilon}{2} \nonumber \\
& \geq \min(\sin(\varphi_1),\sin(\varphi_2))\frac{C_3 \epsilon}{4}
=\frac{1+\cosh(r_1)}{\sinh(r_1)}\epsilon, \nonumber
\end{align}
by the definition of $C_3$ and by taking $\epsilon>0$ small enough. 
Here, we used \eqref{eqn:minsin} in the second inequality
and the fact that $\sin(x)\geq x/2$ for $x>0$ small enough. Furthermore, 
we used that $\cos(x)\geq 1-x^2/2$ for $x>0$ small enough in the third inequality. 
The inequality \eqref{eqn:cosvarphithetaineq1contradict} contradicts 
\eqref{eqn:cosvarphithetaineq1}, and so we conclude that 
\begin{equation} \label{eqn:varphiupperbound}
\varphi\leq \theta+C_3\epsilon
\leq \varphi_2,
\end{equation}
by our assumption on $\theta$ from \eqref{eqn:rhothetaphiassumption},
which verifies that $\varphi\in[\varphi_1,\varphi_2].$

Next, we turn to $\rho'$. 
Since $\theta \leq \varphi$ and since $\varphi \leq \theta+C_3\epsilon$ by
\eqref{eqn:varphiupperbound}, we can use the mean value theorem
to see that for some $\xi\in[\theta,\theta+C_3\epsilon]$
\[
|\sin(\theta)-\sin(\varphi)|=|\cos(\xi)| C_3 \epsilon \leq C_3 \epsilon.
\]
Therefore, we can use that $\varphi\in[\varphi_1,\varphi_2]$ whenever 
\eqref{eqn:rhothetaphiassumption} holds to see that
\[
\left|\frac{\sin(\theta)}{\sin(\varphi)}-1\right|
=\frac{1}{\sin(\varphi)}\left|\sin(\theta)-\sin(\varphi)\right|
\leq \frac{C_3 \epsilon}{\min(\sin(\varphi_1),\sin(\varphi_2))},
\]
and so we see that by our assumption on $\phi$ in \eqref{eqn:rhothetaphiassumption},
\begin{align}
& \sin(\phi) \frac{\sinh(\rho)}{\sin(\varphi)}
\leq \phi \frac{\sinh(\rho)}{\sin(\varphi)}
\leq \left(\epsilon\frac{\sin(\theta)}{\sinh(\rho)}-C_4\epsilon^2 \right)
\frac{\sinh(\rho)}{\sin(\varphi)} \\
& =\epsilon\frac{\sin(\theta)}{\sin(\varphi)}
-\epsilon^2 \frac{2C_3}{\min(\sin(\varphi_1),\sin(\varphi_2))\sinh(r_1)}
\frac{\sinh(\rho)}{\sin(\varphi)} \nonumber\\
& \leq \epsilon\left(1+\frac{C_3 \epsilon}{\min(\sin(\varphi_1),\sin(\varphi_2))}\right)
-\epsilon^2 \frac{2C_3}{\min(\sin(\varphi_1),\sin(\varphi_2))}
\leq \epsilon.
\nonumber
\end{align}
Using this and the fact that $\sinh^{-1}$ is an increasing function 
such that $\sinh^{-1}(\epsilon)\leq \epsilon$
for any $\epsilon>0$ small enough, we arrive at 
\begin{align*}
\rho'& =\sinh^{-1}\left(\sin(\phi) \frac{\sinh(\rho)}{\sin(\varphi)}\right)    
\leq \sinh^{-1}\left(\epsilon\right)  \leq  \epsilon.
\end{align*}
Having established that $\rho'\leq \epsilon,$ we see (as before \eqref{eqn:rhoimpliedbounds})
that $\rho-\epsilon \leq r \leq \rho+\epsilon.$ Thus, if $(\rho,\theta,\phi)$ are as in 
\eqref{eqn:rhothetaphiassumption} we conclude that 
\[
r\in[r_1,r_2].
\]
This completes the proof that \eqref{eqn:rhothetaphiassumption} implies 
\eqref{eqn:rhoprimerphi}, and thereby the lemma.
\end{proof}

For future reference, we let
\begin{equation} \label{eqn:defCL_alt}
\CL^L([\rho'_1,\rho'_2]\times[\varphi_1,\varphi_2]\times[r_1,r_2]) 
:=\{l_L(x,\phi)\in \CL^L(g_0^+): 
\rho'\in [\rho'_1,\rho'_2], \varphi \in [\varphi_1,\varphi_2], r\in[r_1,r_2]\},   
\end{equation}
where $\rho_1'<\rho_2', \varphi_1<\varphi_2$ and $r_1<r_2.$ 
Recall that $g_0^+$ is the horizontal ray starting from $o$ and
pointing to the ``right''. We observe that
\[
\CL^L([\rho'_1,\rho'_2]\times[\varphi_1,\varphi_2]\times[r_1,r_2])
\subset \CL^L(g_0^+) \subset \CL^L.
\]
This notation complements \eqref{eqn:defCLA} and will be convenient throughout the 
rest of the paper.

\medskip
We end this section with a discussion (mentioned in the Introduction) about
why $\lambda_c(L)$ should scale like $L^{-2}$ while 
$\lambda_u(L)$ should scale like $L^{-1}.$ First, it follows from 
Lemma \ref{lemma:altPoisson}
that the expected number of sticks which hit a fixed stick $l_0$ of length $L$
is of order $\lambda L \cdot L=\lambda L^2.$  Therefore, if we take 
$\lambda \geq C L^{-2}$ and choose $C<\infty$ very large, then there should be 
many sticks which intersect $l_0.$ Thus, one should be able to grow an infinite 
structure by first considering the sticks which intersected $l_0,$ and then 
consider 
the sticks which intersected these sticks and so on. Indeed, this idea forms the 
basis of the proof of Proposition \ref{prop:half-plane} which in turn gives 
us the upper bound of Theorem \ref{thm:lambdacmain}.
For this reason, one would expect that 
that the scaling for existence of an unbounded connected component would be the same 
(up to constant factors) 
as for the Euclidean case. Indeed, this is the case as Theorem \ref{thm:lambdacmain} 
shows when compared to the main result of \cite{B_22}.

However, the situation when studying uniqueness of unbounded components 
differ in a qualitative way, as we will now attempt to explain. Unfortunately, some of the steps 
in this intuitive explanation are still unproven, but we believe that it should be convincing. 
Firstly, there is an unproven connection between the stick model and the so-called
Poisson cylinder model both in Euclidean (see \cite{BT_16}) and hyperbolic spaces
(see \cite{BT_15}), in that taking the limit as $L \to \infty$ in the stick model 
should yield the cylinder model. When taking this limit, it is clear from Lemma 
\ref{lemma:altPoisson} that one needs to thin the stick model by a factor of $L^{-1},$ 
otherwise the probability, in the limit, of a cylinder hitting a bounded set, say 
$l[0,(1,0)],$ would be 0 (if thinned 
by a factor of say $L^{-(1+\epsilon)}$) or one (if thinned by a factor of say 
$L^{-1} (\log L)$). 
In the appropriate 
Euclidean limit (i.e.\ the Poisson cylinder model), the set of cylinders will always consist  
of a single connected component (see \cite{BT_16})
and so there is always uniqueness and no phase transition.
In contrast, the cylinder limit in the hyperbolic case does not always contain a unique 
unbounded component (see \cite{BT_15}). Thus, if one
takes $\lambda=c L^{-1},$ with $c$ small enough, in the hyperbolic stick model, then as 
$L \to \infty,$ one would obtain a limit with connected unbounded components 
(because of Theorem \ref{thm:lambdacmain})
but without uniqueness. 
If instead, one takes the same limit but with  $\lambda=C L^{-1},$ and with $C$ large enough,
one would obtain a limit with a unique connected component. Thus, this intuition suggests that 
\[
\lambda_u(L)\sim L^{-1},
\]
which is fundamentally different from the Euclidean case. This is the intuitive explanation for why 
we obtain the result in Theorem \ref{thm:lambdaumain}.

\section{Proof of Theorem \ref{thm:lambdacmain}} \label{sec:lambdac}
In this section we shall prove Theorem \ref{thm:lambdacmain}. The proofs of the two directions
of Theorem \ref{thm:lambdacmain} are very different, and therefore we subdivide this section 
into two subsections, proving the two directions of Theorem \ref{thm:lambdacmain} separately. 

\subsection{The lower bound of Theorem \ref{thm:lambdacmain}.}
The following result provides the lower bound of Theorem \ref{thm:lambdacmain}. The proof 
proceeds by a coupling between the stick process and a branching process.
The proof is similar to the proof of the lower bound of Theorem 3.1 in
\cite{B_22}. However, in order to obtain the value of the constant in the lower bound,
and in order to keep the paper self-contained, we will provide 
the full argument here. 

\begin{proposition} \label{prop:lambdaclowerbound}
For every $L\geq 10$ we have that
\[
\frac{\pi}{2} L^{-2} \leq  \lambda_c(L).
\]
\end{proposition}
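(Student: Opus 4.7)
The plan is to couple the exploration of the cluster of a typical stick to a subcritical Galton--Watson branching process. The key quantity is the expected number of sticks of $\omega^{\lambda,L}$ intersecting a given stick $l_0$ of length $L$, which I would compute directly from Lemma \ref{lemma:altPoisson}.

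By isometry-invariance of the intensity measure $\mu$, I may take $l_0 = l_L(o,0)$, which lies along the geodesic $g_0$. Applying Lemma \ref{lemma:altPoisson} to the half $l_0 \cap g_0^+$ shows that the sticks of $\omega^{\lambda,L}$ which meet $l_0 \cap g_0^+$ are described by a Poisson process on $[0,L/2] \times [0,\pi) \times [-L/2, L/2]$ with intensity $\tfrac{\lambda}{\pi}\sin\varphi\,\dd\rho'\dd\varphi\dd r$; integrating gives a mean of
\[
\frac{\lambda}{\pi} \cdot \frac{L}{2} \cdot \int_0^\pi \sin\varphi\,\dd\varphi \cdot L = \frac{\lambda L^2}{\pi}.
\]
The same computation on $l_0 \cap g_0^-$ contributes another $\lambda L^2/\pi$, and the set of sticks meeting both halves has $\mu$-measure zero (they must pass through $o$). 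Hence the expected number of sticks of $\omega^{\lambda,L}$ intersecting $l_0$ is exactly $\tfrac{2}{\pi}\lambda L^2$.

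Next I would perform a breadth-first exploration of the cluster of $l_0$ under the Palm distribution, which by Slivnyak's theorem coincides with $\omega^{\lambda,L} \cup \{l_0\}$. At each stage, the new sticks added to the cluster form a subset of those sticks of the \emph{unexplored} portion of $\omega^{\lambda,L}$ which intersect the current frontier stick; by the Poisson restriction theorem, these form a Poisson point process on a subset of $\CL^L$ whose intensity is dominated by $\tfrac{2}{\pi}\lambda L^2$. Since the regions of $\CL^L$ queried in distinct exploration steps are disjoint, the offspring counts may be coupled to be independent. Adjoining ``ghost'' children where necessary, the cluster size is stochastically dominated by the total progeny of a Galton--Watson process with Poisson$(\tfrac{2}{\pi}\lambda L^2)$ offspring distribution.

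For $\lambda < \tfrac{\pi}{2} L^{-2}$ the offspring mean is strictly less than $1$, so the dominating branching process is subcritical and dies out almost surely; hence the cluster of $l_0$ is almost surely finite under the Palm distribution. A standard Mecke/ergodicity argument then transfers this back to $\omega^{\lambda,L}$: if with positive probability $\CC$ contained an unbounded component, then some stick of $\omega^{\lambda,L}$ would belong to it, and by the Mecke formula the Palm probability of having an infinite cluster would be positive, a contradiction. Combined with the zero--one law of Section \ref{sec:stickprocess}, this gives $\lambda_c(L) \geq \tfrac{\pi}{2} L^{-2}$. The main technical point is the generation-by-generation Poisson domination, which is handled purely by the restriction theorem applied to disjoint subsets of $\CL^L$; no further hyperbolic geometry is needed beyond the intensity computation supplied by Lemma \ref{lemma:altPoisson}.
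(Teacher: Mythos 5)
Your proposal is correct and follows essentially the same route as the paper: compute $\mu(\CL^L(l_0)) = \tfrac{2}{\pi}L^2$ from Lemma~\ref{lemma:altPoisson}, and dominate the exploration of the cluster of a fixed stick by a Galton--Watson process with that offspring mean. The paper makes the padding explicit by adjoining auxiliary i.i.d.\ copies $(\omega^{\lambda,L}_{k,n})$ of the Poisson process to restore the missing mass at each step rather than invoking ``ghost children'' and Slivnyak/Mecke verbatim, but this is a cosmetic difference in how the same coupling is presented.
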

\begin{proof}
Let $\omega^{\lambda,L}$ be as before (see Section \ref{sec:Poistick}), and let 
$(\omega^{\lambda,L}_{k,n})_{k,n\geq 1}$ be an i.i.d. collection where 
$\omega^{\lambda,L}_{k,n}$ has the same distribution as $\omega^{\lambda,L}.$
Then, consider the stick $l_L(o,0)$ and let 
$\CC_o(\omega^{\lambda,L})$ denote the connected component of $\CC\cup l_L(o,0)$ 
containing $l_L(o,0).$
The main idea is to use $\omega^{\lambda,L}$ and the sequence 
$(\omega^{\lambda,L}_{k,n})_{k,n\geq 1}$ to 
construct a set which is larger than $\CC_o(\omega^{\lambda,L})$ and which will  
be coupled to a subcritical branching process. This allows us to conclude that 
this larger set is finite from which follows that
also $\CC_o(\omega^{\lambda,L})$ is finite.
In this context, $l_{L}(o,0)$ corresponds to generation 0 of the branching process.
For convenience, we let $l_0:=l_L(o,0),$ and note that although $l_{L,0}$ or 
something similar would be a more consistent notation, this would quickly 
become cumbersome. A similar comment applies to many places below.

Let 
\[
\psi^\lambda_1=\{l\in \omega^{\lambda,L}: l \cap l_0 \neq \emptyset\}.
\]
We see that $\psi^{\lambda}_1$ consists of the sticks in $\omega^{\lambda,L}$
that intersect the generation 0 stick $l_{L}(o,0)$. Let $l_{1,1},\ldots 
l_{|\psi^{\lambda}_1|,1}$ be an enumeration of the sticks in $\psi^\lambda_1.$
We think of these sticks as generation 1. 

In order to define what will be generation 2, some care is needed. Consider first
\[
\psi^{\lambda}_{1,2}
:=\{l\in \omega^{\lambda,L} \setminus \psi^\lambda_1 : l \cap l_{1,1} \neq \emptyset \}
\cup \{l\in \omega^{\lambda,L}_{1,1}: l \cap l_{1,1}\neq \emptyset, 
l \cap l_0\neq \emptyset\}.
\]
The first of the two sets on the right hand side is the collection of sticks
in $\omega^{\lambda,L}$ that intersect the stick $l_{1,1},$
but that we did not encounter when we defined $\psi^\lambda_1.$ This guarantees that 
we are using $\omega^{\lambda,L}$ on disjoint sets when defining $\psi^\lambda_1$
and $\psi^\lambda_{1,2}.$ However, it is then also clear that in order for the 
number of lines in $\psi^\lambda_1$ and $\psi^\lambda_{1,2}$ to have the same 
distribution, we need to compensate. This is why the second set is there, since this 
is the ``missing part'' (i.e.\ it counts lines that intersect both $l_0$ and 
$l_{1,1}$). Therefore, $|\psi^\lambda_1|$ and $|\psi^{\lambda}_{1,1}|$ have the same 
distribution, and since we used $\omega^{\lambda,L}_{1,1}$ in the second set,
we are making sure that $|\psi^\lambda_1|$ and $|\psi^{\lambda}_{1,1}|$
are also independent. Lastly, we observe that any $l\in \omega^{\lambda,L}$
that intersects $l_0$ and/or $l_{1,1}$ must belong to 
$\psi^\lambda_1 \cup \psi^\lambda_{1,2}.$ 

The above paragraph explained how to handle the offspring of the first child in 
generation 1. In general, for child $k=2,\ldots,|\psi^\lambda_1|$ we define
\begin{equation}\label{eqn:defpsik2}
    \psi^\lambda_{k,2} =\{l\in \omega^{\lambda,L}\setminus 
\left(\psi^\lambda_1 \cup_{j=1}^{k-1} \psi^\lambda_{j,2}\right): 
l\cap l_{k,1} \neq \emptyset\} 
\cup 
\{l\in \omega^{\lambda,L}_{k,1}: l \cap l_{k,1} \neq \emptyset, 
l \cap \left(l_0\cup_{j=1}^{k-1} l_{j,1}\right)\neq \emptyset\}.
\end{equation}  
The idea behind this definition is the same as before. That is, in the first set 
we use $\omega^{\lambda,L}$ for those sticks we have not encountered yet, and 
then we ``pad'' the first set with the second set in order to make sure that 
$|\psi^\lambda_{k,2}|$ has the same 
distribution as $|\psi^\lambda_1|.$ We conclude that given $|\psi^\lambda_1|,$ the sequence
$|\psi^\lambda_{1,2}|,\ldots, |\psi^\lambda_{|\psi^\lambda_1|,2}|$ is an i.i.d.\ 
sequence. We then let
\begin{equation} \label{eqn:defpsi2}
   \psi^\lambda_2=\bigcup_{k=1}^{|\psi^\lambda_1|} \psi^\lambda_{k,2}
\end{equation} 
be the collection of sticks in generation 2, and we enumerate them 
$l_{1,2},\ldots l_{|\psi^\lambda_2|,2}.$ Note that if $l\in \omega^{\lambda,L}$
is such that $l$ is connected to $l_0$ using at most one other stick from 
$\omega^{\lambda,L},$ then we must have that $l\in \psi^\lambda_1 \cup \psi^\lambda_{2}.$

The idea for successive generations is the same. That is, given $\psi^\lambda_n,$ 
we define the collection $(\psi^\lambda_{k,n+1})_{1\leq k\leq |\psi^\lambda_n|}$ in the 
way analogous to \eqref{eqn:defpsik2} and let 
\[
\psi^\lambda_{n+1}=\bigcup_{k=1}^{|\psi^\lambda_n|} \psi^\lambda_{k,n+1},
\]
analogous to \eqref{eqn:defpsi2}.
If we then let $\psi^\lambda=\bigcup_{n=1}^\infty \psi^\lambda_n$ and 
\[
\CC_o(\psi^\lambda)=l_0 \bigcup_{l \in \psi^\lambda} l,
\]
(which we note is a connected component by definition) it follows from our 
construction that
\begin{equation} \label{eqn:CCoinclusion} 
 \CC_o(\omega^{\lambda,L}) \subset \CC_o(\psi^\lambda).   
\end{equation}

By construction, the sequence $(|\psi^\lambda_n|)_{n\geq 1}$ corresponds to the 
generational sizes of a Galton-Watson tree. What is left is to show that 
$\BE[|\psi^\lambda_1|]\leq 1$ whenever $\lambda\leq \frac{2}{\pi} L^{-2},$ since then this tree is 
subcritical and will almost surely be finite. This in turn implies that 
$\CC_o(\psi^\lambda)$ is finite, and by \eqref{eqn:CCoinclusion} 
we can conclude that also $\CC_o(\omega^{\lambda,L})$ is finite. 

Recall from \eqref{eqn:defCLA} that $\CL^L(l_0)$ is the set of sticks which intersect 
$l_0.$ We can now use Lemma \ref{lemma:altPoisson} to see that 
\begin{equation} \label{eqn:Epsi}
\BE[|\psi^\lambda_1|] = \lambda\mu(\CL^L(l_0))
= \frac{\lambda}{\pi}\int_{\rho'=0}^L \int_{-L/2}^{L/2} \int_{0}^{\pi} 
\sin \varphi \dd \varphi \dd r \dd \rho'
=\frac{2\lambda }{\pi}L^2.    
\end{equation}
Therefore, if we pick $\lambda\leq \frac{\pi}{2}L^{-2},$ then $\BE[|\psi^\lambda_1|] \leq 1,$ 
and as explained, $\CC_o(\omega^{\lambda,L})$ is then finite almost surely.
By standard Poisson process arguments, $\CC(\omega^{\lambda,L})$ cannot contain any unbounded 
components, and we therefore conclude that 
\[
\lambda_c(L)\geq \frac{\pi}{2}L^{-2}.
\]
\end{proof}

\subsection{The upper bound of Theorem \ref{thm:lambdacmain}.}
The purpose of this subsection is to prove the upper bound of 
Theorem \ref{thm:lambdacmain} which will follow from Proposition 
\ref{prop:half-plane} below. Let 
\[
H_0:=\{(\rho,\theta)\in \BH^2: \theta \in[0,\pi/2]\cup [3\pi/2,2\pi), 0\leq \rho<\infty\},
\]
denote the ``right-hand'' half-plane of $\BH^2.$ Then, we let
\begin{equation} \label{eqn:defCH0}
\CC_{|H_0}:= \bigcup_{l_L\in \omega^{\lambda,L}: l_L\subset H_0} l_L,   
\end{equation}
and let $\left(\CC_{|H_0}\right)_{l_0}\subset \CC_{|H_0}\cup l_0$ denote the connected component of 
$\CC_{|H_0} \cup l_0$ which contains $l_0.$ We have the following result, which 
concerns
percolation in a half-plane. 
\begin{proposition} \label{prop:half-plane}
For any $0<p<1,$ there exists a constant $0<C(p)<\infty$ such that for any $L$ large enough, 
and any $\lambda \geq CL^{-2}$ we have that 
\begin{equation}\label{eqn:Plargerthanp}
 \BP\left(\left(\CC_{|H_0}(\omega^{\lambda,L})\right)_{l_0} \textrm{ is unbounded }\right)>p.   
\end{equation}
Furthermore, if $C\geq \frac{32 \pi}{\sqrt{3}-1},$ then 
\eqref{eqn:Plargerthanp} holds with $p>0.$
\end{proposition}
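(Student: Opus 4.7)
The plan is to mirror the branching-process coupling used in the proof of Proposition \ref{prop:lambdaclowerbound}, but to run it in the opposite direction: instead of dominating $\CC_o(\omega^{\lambda,L})$ by a subcritical Galton--Watson tree, I shall construct a supercritical Galton--Watson tree of sticks all of whose vertices lie in $(\CC_{|H_0})_{l_0}$. Survival of that tree forces $(\CC_{|H_0})_{l_0}$ to be unbounded, so supercriticality gives the ``$p>0$'' assertion, and driving the offspring mean to infinity upgrades this to the ``$p$ arbitrarily close to $1$'' assertion.

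Starting from the root $l_0:=l_L(o,0)$, I would declare a \emph{good offspring} of a stick $l$ in the tree to be any $l'\in\omega^{\lambda,L}$ satisfying four conditions: (i) $l'\cap l\neq\emptyset$; (ii) $l'\subset H_0$; (iii) the angle $\varphi$ between $l'$ and $l$ lies in an interval $[\pi/2-\alpha,\pi/2+\alpha]$ with $\alpha>0$ to be optimized; (iv) the center of $l'$ lies in a prescribed ``fresh'' spatial region disjoint from the fresh regions of all previously placed sticks. By spatial independence of the Poisson point process, (iv) forces the offspring counts across different sticks to be independent. Lemma \ref{lemma:altPoisson} computes the expected number of sticks in $\omega^{\lambda,L}$ meeting $l$ with intersection angle in $[\varphi_1,\varphi_2]$ and center anywhere on the $L$-long intersecting segment as
\[
\frac{\lambda}{\pi}\int_0^L\int_{\varphi_1}^{\varphi_2}\int_{-L/2}^{L/2}\sin\varphi\,\dd r\,\dd\varphi\,\dd\rho'
=\frac{\lambda L^2}{\pi}\bigl(\cos\varphi_1-\cos\varphi_2\bigr).
\]
Imposing conditions (ii) and (iv) then only discards an $L$-independent fraction of these sticks: thanks to the exponential growth of hyperbolic balls, once one steps perpendicular to $l$ by a distance of order $L$, one sits in an essentially ``generic'' part of $H_0$ and a positive constant proportion of the sticks intersecting $l$ there are entirely in $H_0$ and lie in the designated fresh region. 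Hence the expected number of good offspring is $c\lambda L^2$ for an explicit $c>0$ determined by $\alpha$ and the geometric choices; tuning these so that $c\geq\frac{\sqrt{3}-1}{32\pi}$ makes the process supercritical exactly when $\lambda\geq\frac{32\pi}{\sqrt{3}-1}L^{-2}$, yielding $(\mathcal C_{|H_0})_{l_0}$ unbounded with positive probability.

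For $p$ arbitrarily close to $1$, one notes that, conditionally on the history, the good-offspring variable is the count of a thinned Poisson process, hence Poisson with mean $c\lambda L^2$. Taking $C=C(p)$ large sends this mean to infinity, and the survival probability of a supercritical Galton--Watson tree with Poisson offspring of mean $m$ tends to $1$ as $m\to\infty$; in fact one can obtain the same conclusion more crudely by observing that from $l_0$ alone the number of good first-generation sticks, each of which independently seeds a surviving subtree with probability bounded away from $0$, can be made arbitrarily large in expectation.

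The main obstacle I anticipate is the geometric book-keeping in step (iv): for each stick $l$ in the tree one must exhibit a fresh region that is simultaneously contained in $H_0$, disjoint from all previously declared fresh regions, and yet still captures a positive, $L$-independent fraction of the sticks intersecting $l$. The natural candidate is a hyperbolic ``cone'' based at $l$ and pointing outward from the root, and the real work lies in verifying that, under the angle restriction (iii), these cones associated to different sticks in the tree are genuinely pairwise disjoint, while the surviving offspring counts are still large enough to realize the specific constant $\frac{32\pi}{\sqrt{3}-1}$ stated in the proposition.
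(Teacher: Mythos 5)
Your high-level plan is exactly the paper's: couple a supercritical Galton--Watson tree to the stick process inside $H_0$ and read off survival as unboundedness. Where you and the paper part ways is in the geometric verification that you yourself flag as ``the main obstacle,'' and that is not a side detail --- it is the entire technical content of the proof. The paper resolves it with Lemma \ref{lemma:half-planes}: it chooses sticks with $\rho'\in[L/4,L/2]$, $\varphi\in[\pi/6,\pi/3]$, $r\in[L/4,L/2]$ (and the mirror-image set with $\varphi\in[\pi-\pi/3,\pi-\pi/6]$, $r\in[-L/2,-L/4]$), and shows that the perpendicular half-plane $H_1$ through the midpoint of such a stick satisfies $H_1\subset H_0$, that the two half-planes $H_1,H_{-1}$ are disjoint, and that the chosen sticks themselves lie in $H_0$. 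The computation of the $\mu$-mass of this parameter box is exactly what yields the constant $\frac{\sqrt{3}-1}{32\pi}L^2$, hence the upper-bound constant $\frac{32\pi}{\sqrt{3}-1}$. You gesture at a cone-based version of this but do not prove it, and as written it would not go through.

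Two of your design choices would actively work against you. First, centering the angle interval at $\pi/2$: if a child stick is nearly perpendicular to its parent, the geodesic perpendicular to the child through its midpoint is nearly \emph{parallel} to the parent, so its ideal endpoints straddle both sides of $H_0$ and the half-plane (or any cone about that geodesic) escapes $H_0$. This is exactly why the paper keeps $\varphi$ bounded away from $\pi/2$ in $[\pi/6,\pi/3]$, and simultaneously restricts $r\in[L/4,L/2]$ so that the stick extends well to one side of the parent. Second, allowing a Poisson (unbounded) offspring count: every offspring of a given stick needs its own fresh region, pairwise disjoint and disjoint from everything produced so far; doing this for an unbounded random number of simultaneous children is a substantially harder packing problem than the paper faces. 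The paper sidesteps it by construction, issuing each vertex only \emph{two} candidate child positions (the two target boxes above) and picking at most one stick per box, which makes the offspring law $\mathrm{Bin}(2,q)$ with $q=1-\exp\left(-\lambda\frac{\sqrt3-1}{32\pi}L^2\right)$; supercriticality then needs only $2q>1$, and driving $C$ up sends $q\to1$ and the survival probability to $1$. Unless you can exhibit the cone family and prove the disjointness and containment statements --- the analog of Lemma \ref{lemma:half-planes} --- your argument has a hole precisely where the paper does its real work.
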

\noindent
Clearly, Proposition \ref{prop:half-plane} implies the upper bound of 
Theorem \ref{thm:lambdacmain}.

Before we give the proof of Proposition \ref{prop:half-plane}, we shall informally explain the idea behind 
it, see also Figures  \ref{fig:embeddtree1} and \ref{fig:embeddtree2} for 
illustrations. 
We start with the stick $l_0=l_L(o,0)$. We will then consider 
the set of sticks $l_L(x,\phi)=l_L(\rho',\varphi,r)$ (using the notation established
before Lemma \ref{lemma:altPoisson}) satisfying the following 
three properties:
\begin{enumerate}
    \item $l_L(\rho',\varphi,r)$ hits $l_0$ between $(L/4,0)$ and $(L/2,0),$ 
    i.e.\ $\rho'\in[L/4,L/2].$

    \item $l_L(\rho',\varphi,r)$ hits $l_0$ at an angle in the interval 
    $[\pi/6,\pi/3],$ i.e.\  $\varphi\in[\pi/6,\pi/3].$

    \item $l_L(\rho',\varphi,r)=l_L(x,\phi)$ is such that $d_h(x,l_L(x,\phi)\cap l_0)\geq L/4$
    where $x=x(\rho',\varphi,r).$ That is, $r\in[L/4,L/2].$ 
\end{enumerate} 
Recall the notation  $\CL^L([L/4,L/2]\times[\pi/6,\pi/3]\times[L/4,L/2])$ from
\eqref{eqn:defCL_alt} and observe that by Lemma \ref{lemma:altPoisson} we have that 
\begin{equation}\label{eqn:muofsticks}
\mu(\CL^L([L/4,L/2]\times[\pi/6,\pi/3]\times[L/4,L/2])) 
=\frac{1}{\pi}\int_{L/4}^{L/2} \int_{\pi/6}^{\pi/3} \int_{L/4}^{L/2} 
\sin \varphi  \dd r  \dd \varphi \dd \rho'
=\frac{\sqrt{3}-1}{32\pi }L^2. 
\end{equation} 
Therefore, if $\lambda\geq C L^{-2}$ with $C<\infty$ large, we will be able 
to conclude that with probability close to one, there exists a line 
segment $l_1\in \omega^{\lambda,L}$ satisfying 
these three conditions. 
Similarly, there is with probability close to one another stick
$l_{-1}\in \omega^{\lambda,L}$ satisfying the corresponding conditions 
reflected in 
the horizontal axis (see Figure \ref{fig:embeddtree2}). That is (using 
the natural notation),
\[
l_{-1}\in \CL^L([L/4,L/2]\times[\pi-\pi/3,\pi-\pi/6]\times[-L/4,-L/2]).
\]
Given such a line segment $l_1$ we then consider the half-plane (see again Figures
\ref{fig:embeddtree1} or \ref{fig:embeddtree2}) $H_1$ defined by the geodesic 
which passes through the center of $l_1$ and which is orthogonal 
to $l_1,$ and 
we prove in Lemma \ref{lemma:half-planes}
that $H_1 \subset H_0.$ Similarly, having defined $H_{-1}$ in the obvious way, we 
prove in Lemma \ref{lemma:half-planes}
that $H_{-1}\subset H_0$ and furthermore that $H_1 \cap H_{-1}=\emptyset.$
We can now repeat the procedure just described on the part of the line 
segment $l_1$ ($l_{-1}$) which belongs to $H_1$ ($H_{-1}$), attempting to 
find the line segments $l_{1,1}$ and $l_{1,-1}$ ($l_{-1,1}$ and $l_{-1,-1}$).
Continuing, this construction can be coupled to a Galton-Watson tree which will be 
supercritical whenever $\lambda\geq C L^{-2}$ with $C<\infty$ large enough.

\begin{figure}[h] 
\begin{center}
\includegraphics[page=1, clip=true, trim=90 80 340 60 , width=12cm]{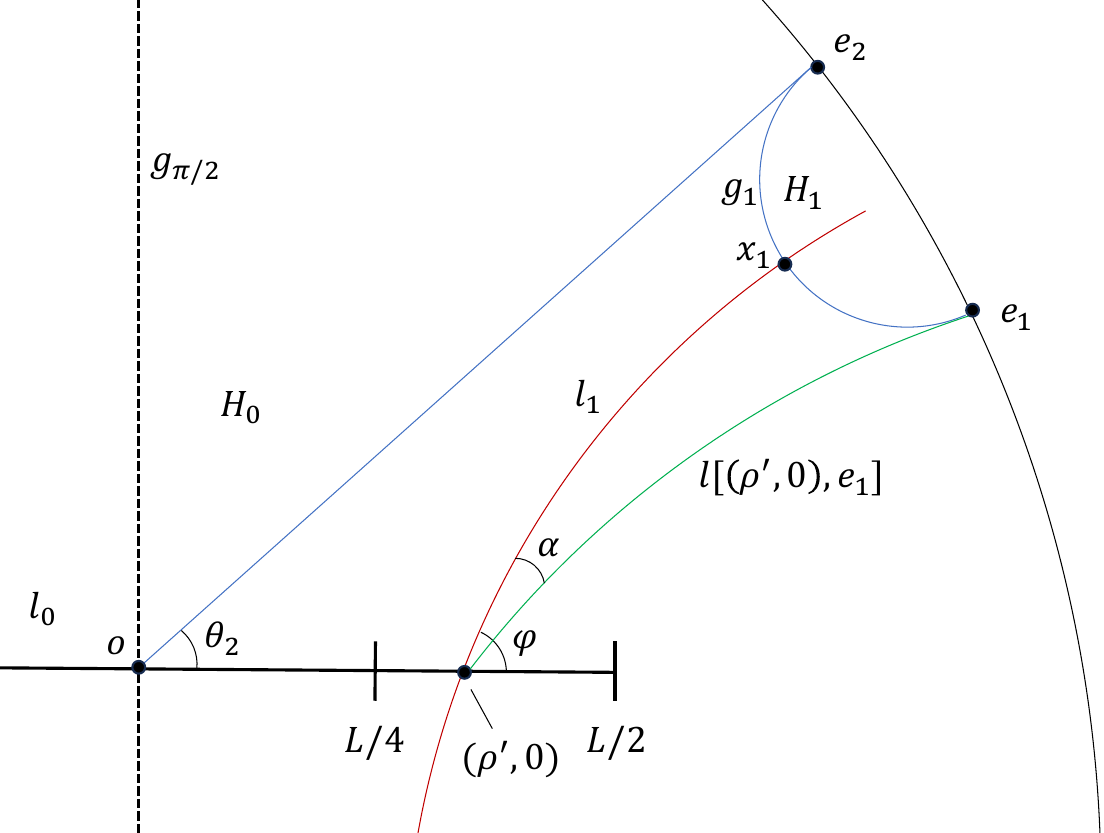}
\caption{The horizontal black line is $l_0.$ In red is $l_1,$ while 
$g_1$ and $l[o,e_2]$ are marked in blue, and $l[(\rho',0),e_1]$ in green. We can also see
the half-planes $H_0$ and $H_1$ and the angles $\varphi,\theta_2,\alpha$ 
mentioned in the proof of Lemma \ref{lemma:half-planes}.
The large black arc is part of $\partial \BH^2.$ The line $l[o,e_1]$
is not in this picture.} \label{fig:embeddtree1}
\end{center}
\end{figure}

\begin{figure}[h] 
\begin{center}
\includegraphics[page=2, clip=true, trim=90 20 340 60 , width=12cm]{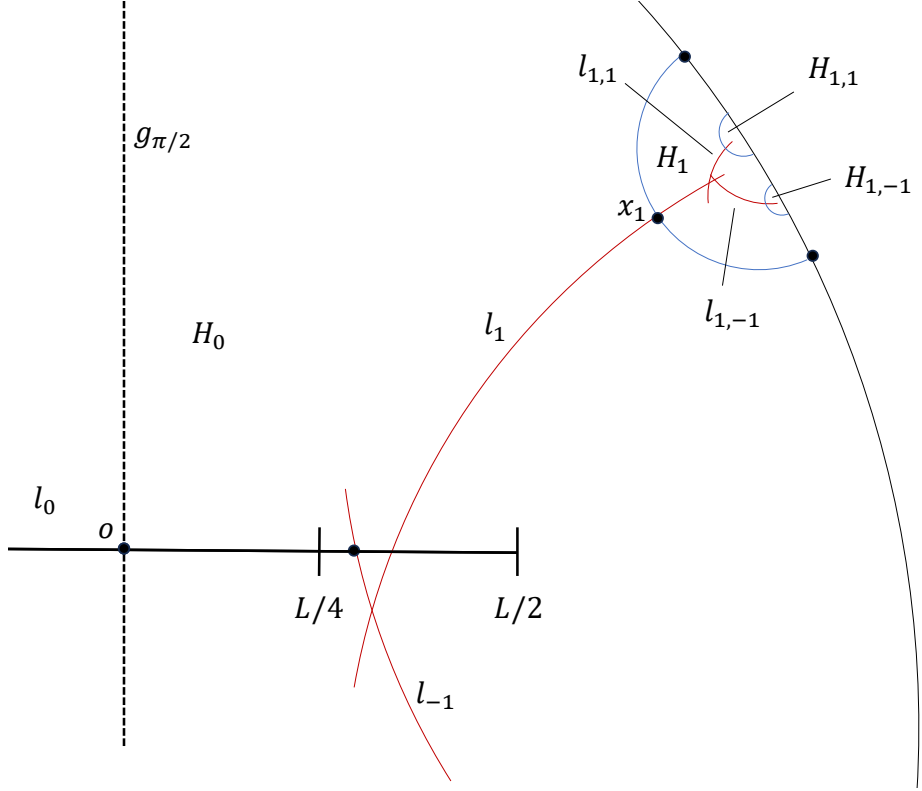} 
\caption{In this picture we can see $l_1,l_{-1},l_{1,1}$ and $l_{1,-1}$ all marked in red. 
In addition, we see the corresponding half-planes $H_1,H_{1,1}$ and $H_{1,-1}.$ The 
half-plane $H_{-1}$ is outside of the picture. 
The large black arc is part of $\partial \BH^2.$} \label{fig:embeddtree2}
\end{center}
\end{figure}

Before we can prove Proposition \ref{prop:half-plane}, we need to
address Lemma \ref{lemma:half-planes}. Let $l_0=l_L(o,0)$ and let 
$l[(3L/4,0),(L,0)] \subset l_0$ be the line segment of length $L/4$ centred at 
$(7L/8,0)$ and with angle $0$ (recall the definition of $l[x,y]$ from 
Section \ref{sec:modelsanddef}). Then, let 
\[
l_{1}\in \CL^L([L/4,L/2]\times[\pi/6,\pi/3]\times[L/4,L/2])
\]
and
\[
l_{-1}\in \CL^L([L/4,L/2]\times[\pi-\pi/3,\pi-\pi/6]\times[-L/4,-L/2])
\]
be arbitrary. Next, we consider 
$l_1=l_L(x_1,\theta_1)$ 
as above, and we let 
$g_1$ be the geodesic through $x_1$ which is perpendicular to $l_1$
(see again Figure \ref{fig:embeddtree1}).
Clearly, 
$g_1$ divides $\BH^2$ into two half-planes, and we let $H_1$ be the one 
which do not contain 
the origin $o.$ Similarly, we define $H_{-1}$ using $l_{-1}=l_L(x_{-1},\theta_{-1})$  and $g_{-1}.$

\begin{lemma} \label{lemma:half-planes}
For any $l_1$ and $l_{-1}$ as above, and any $0<L<\infty$ large enough, we have that 
\[
H_1 \subset H_0, \ \  H_{-1} \subset H_0 \textrm{ and } H_1 \cap H_{-1} =\emptyset.
\]
Furthermore, 
\[
l_1 \subset H_0 \textrm{ and } l_{-1} \subset H_{0}.
\]
\end{lemma}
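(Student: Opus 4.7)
The analysis centers on the triangle $\CT_1$ with vertices $o$, $P_1 := l_1\cap g_0^+ = (\rho'_1,0)$, and $x_1$: its side $oP_1$ has length $\rho'_1\in [L/4,L/2]$, its side $P_1x_1$ has length $r_1\in [L/4,L/2]$, and the included angle at $P_1$ equals $\pi-\varphi_1$ with $\varphi_1\in[\pi/6,\pi/3]$. Writing $\rho_1:=d_h(o,x_1)$, $\theta_1:=\theta(x_1)$, and $\alpha_1$ for the angle of $\CT_1$ at $x_1$, the hyperbolic laws of cosines \eqref{eqn:hypcosinerule} and sines \eqref{eqn:hypsinerule} give
\[
\cosh\rho_1 = \cosh\rho'_1\cosh r_1 + \sinh\rho'_1\sinh r_1\cos\varphi_1,\qquad
\sin\theta_1 = \frac{\sinh r_1\sin\varphi_1}{\sinh\rho_1},\qquad
\sin\alpha_1 = \frac{\sinh\rho'_1\sin\varphi_1}{\sinh\rho_1}.
\]
Substituting the ranges of the parameters shows $\rho_1\to\infty$ and $\theta_1,\alpha_1\to 0$ as $L\to\infty$, with $\theta_1>0$; in particular $x_1\in H_0$ for $L$ large, and by horizontal-reflection symmetry $x_{-1}\in H_0$.

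For $l_1\subset H_0$, I would apply the same triangle analysis to the two endpoints of $l_1$ (at hyperbolic distances $r_1+L/2$ and $L/2-r_1$ from $P_1$ along the geodesic containing $l_1$) to show both have $|\theta|<\pi/2$ for $L$ large. Since the geodesic containing $l_1$ and the boundary geodesic $g_{\pi/2}=\partial H_0$ meet in at most one point, both endpoints lying in the interior of $H_0$ forces $l_1\subset H_0$; symmetrically $l_{-1}\subset H_0$.

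For $H_1\subset H_0$ and $H_1\cap H_{-1}=\emptyset$, the key step is to locate the two ideal endpoints of $g_1$ on $\partial\BH^2$. Let $A_1$ be the foot of the perpendicular from $o$ to $g_1$ and set $d_1:=d_h(o,A_1)$. In the right triangle $oA_1x_1$ (right angle at $A_1$, angle $\pi/2-\alpha_1$ at $x_1$ because $g_1\perp l_1$), the standard right-triangle relations give $\sinh d_1=\sinh\rho_1\cos\alpha_1$ (so $d_1\approx\rho_1$) and a small shift $|\theta(A_1)-\theta_1|=:\psi$ satisfying $\psi\ll\theta_1$ for $L$ large; in particular $\theta(A_1)\approx\theta_1>0$ and $\theta(A_1)\to 0$. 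Applying \eqref{eqn:hypcosinerule_infinite} to the doubly-ideal right triangle with finite side $oA_1$ yields the angle of parallelism $\Pi(d_1)$ with $\sin\Pi(d_1)=1/\cosh d_1$, and identifies the two ideal endpoints of $g_1$ at angular positions $\theta(A_1)\pm\Pi(d_1)$. The decisive quantitative comparison
\[
\frac{\sin\theta_1}{\sin\Pi(d_1)} = \frac{\sinh r_1\sin\varphi_1\,\cosh d_1}{\sinh\rho_1} \;\ge\; \tfrac{1}{2}\sinh(L/4)\cos\alpha_1 \;\longrightarrow\; \infty
\]
places both ideal endpoints of $g_1$ strictly inside the first-quadrant arc $(0,\pi/2)\subset\partial\BH^2$ for $L$ large. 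Because two distinct geodesics meet at most once, $g_1$ then cannot cross either $g_{\pi/2}$ or $g_0$, so $g_1\subset H_0\cap\{y>0\}$. A short connectedness argument (the connected set $H_0^c$ meets every neighborhood of $o\in H_1^c$ and is disjoint from $g_1$) upgrades this to $H_1\subset H_0\cap\{y>0\}$, and by horizontal-reflection symmetry $H_{-1}\subset H_0\cap\{y<0\}$, so $H_1\cap H_{-1}=\emptyset$.

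The main obstacle is the quantitative comparison $\theta(A_1)>\Pi(d_1)$: both quantities tend to $0$ exponentially fast in $L$, and one must verify that the angular offset $\theta(A_1)\approx\theta_1$ (of order $\sinh r_1\sin\varphi_1/\sinh\rho_1$) dominates the angle of parallelism ($\sim 1/\cosh\rho_1$), their ratio growing at least like $\sinh(L/4)$. All remaining containment and disjointness assertions then reduce to routine hyperbolic trigonometry and planar topology.
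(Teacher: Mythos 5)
Your plan is correct and reaches the same conclusions, but by a more elaborate route than the paper. For $H_1 \subset H_0$, the paper works in a single triangle with vertices $(\rho',0)$, $x_1$, and an ideal endpoint $e$ of $g_1$: the right angle at $x_1$ (since $g_1 \perp l_1$) makes \eqref{eqn:hypcosinerule_infinite} collapse to $\sin\alpha = 1/\cosh\bigl(d_h((\rho',0),x_1)\bigr) \le 1/\cosh(L/4)$ for the angle $\alpha$ at $(\rho',0)$; the two rays from $(\rho',0)$ to the ideal endpoints of $g_1$ then make angles $\varphi \mp \alpha \in [\pi/6-\alpha,\,\pi/3+\alpha]\subset(0,\pi/2)$ with $g_0$ once $\alpha<\pi/6$, so both endpoints sit in the open first-quadrant arc and $g_1\subset H_0^+$. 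You invoke the same law \eqref{eqn:hypcosinerule_infinite}, but repackaged as the angle-of-parallelism formula at the foot $A_1$ of the perpendicular from $o$ to $g_1$, which obliges you to introduce and control three auxiliary quantities ($d_1$, $\Pi(d_1)$, the shift $\psi$) and to show $\theta(A_1)$ dominates $\Pi(d_1)$ as two exponentially small angles; the paper's triangle compresses everything into one manifestly small $\alpha$ with no head-to-head comparison. For $l_1\subset H_0$, the paper reuses the first part: $l_1^+:=l_1\cap H_1\subset H_0$ automatically, and for the near half $l_1^-$ (of length $L/2$, containing $(\rho',0)$) a one-line law-of-sines argument shows any crossing $y\in l_1^-\cap g_{\pi/2}$ would satisfy $d_h(y,(\rho',0))>d_h(o,(\rho',0))\ge L/4$, forcing $|l_1^-|>L/2$. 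Your alternative of directly computing $\theta$ for both endpoints of $l_1$ also works, but the near endpoint at distance $L/2-r_1\le L/4$ from $(\rho',0)$ needs its own law-of-cosines estimate: its distance to $o$ is not a priori large, so the ``distances grow, hence angles vanish'' heuristic you rely on for $x_1$ does not transfer verbatim. Both routes yield the lemma; the paper's is shorter because it only ever inspects triangles in which one relevant leg or half-plane inclusion is already bounded below by $L/4$.
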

\begin{proof}
We start by noting that the first statement follows if we can prove that 
\begin{equation} \label{eqn:g1include}
   g_1 \subset H_0^+:=\{(\rho,\theta)\in \BH^2: 0< \theta < \pi/2, 0\leq \rho<\infty\}, 
\end{equation}
since it then clearly follows that 
$H_1 \subset H_0^+ \subset H_0$ and (using obvious notation) 
$H_{-1} \subset H_0^{-} \subset H_0$ and furthermore, 
$H_0^+ \cap H_0^-=\emptyset$ by definition.

In order to show \eqref{eqn:g1include}, consider the two endpoints 
$e_1=e_1(g_1),e_2=e_2(g_1) \in \partial \BH^2$ of $g_1$. 
Let $l[o,e_1]$ and $l[o,e_2]$  be the infinite straight lines between 
$o,$ $e_1$ and $o,$ $e_2$ respectively.
Then, let $\theta_1=\theta_1(e_1)$ denote the angle between $l_0$
and $l[o,e_1]$, and define $\theta_2=\theta_1(e_2)$ correspondingly by using 
$l[o,e_2]$.

Consider next the triangle defined by the three points 
$(\rho',0),x_1$ and $e_1$ 
(see again Figure \ref{fig:embeddtree1}). 
Let $\alpha$ denote the angle opposite the edge between $x_1$ and $e_1$
(see again Figure \ref{fig:embeddtree1}),
and if we note that the angle $\beta$ between 
$g_1$ and $l_1$ equals $\pi/2$ by definition, then by the hyperbolic law of cosines 
\eqref{eqn:hypcosinerule_infinite}
\[
\cosh(d_h((\rho',0),x_1))=\frac{1+\cos(\alpha)\cos(\beta)}{\sin(\alpha)\sin(\beta)}
=\frac{1}{\sin(\alpha)}.
\]
By definition, $d_h((\rho',0),x_1)\geq L/4$ and therefore,
\[
\sin(\alpha)=\frac{1}{\cosh(d_h((\rho',0),x_1))}\leq \frac{1}{\cosh(L/4)}
\leq 2e^{-L/4},
\]
from which it follows that 
\[
\alpha  \leq \sin^{-1}\left(\frac{1}{\cosh(L/4)}\right)\leq 4e^{-L/4},
\]
where the last inequality follows whenever $L$ is large enough. 
Since $\varphi(l_1) \geq \pi/6$ by assumption, it follows that the angle between 
$l[(\rho',0),e_1]$ and $l_0$ is at least $\pi/6-\alpha\geq \pi/6-4e^{-L/4}>0$ 
for $L$ large enough. It then easily follows that also $\theta_1>0.$
In a similar way, the angle between $l_0$ and $l[(\rho',0),e_2]$ is at most
$\pi/3+4e^{-L/4}<\pi/2$ for $L$ large enough, from which it follows that 
$\theta_2< \pi/2.$ We conclude that $g_1 \subset H_0^+$ as desired.

For the second statement, let $l_1=l_1^+ \cup l_1^-$ where $l_1^+:=l_1 \cap H_1$
(see also Figure \ref{fig:triangles3}). 
By definition, $l_1^+\subset H_1 \subset H_0$ where the second inclusion follows from the first statement
of this lemma. Next, observe that $l_1^-$ is such that $(\rho',0)\in l_1^-$ and that 
the hyperbolic length of $l_1^-$ equals $L/2.$
Consider now the ``vertical axis'' $g_{\pi/2}=\partial H_0 \cap \BH^2.$ 
Assume for contradiction that $l_1 \cap H_0^c \neq \emptyset$ so that 
$l_1^-\cap g_{\pi/2}=y$ for some $y\in g_{\pi/2}.$ 
Applying the hyperbolic law of sines to the triangle with corner points $o,(\rho',0)$ and $y$ shows that 
\[
\frac{\sinh(d_h(y,(\rho',0)))}{\sin \pi/2}
=\frac{\sinh(d_h(o,(\rho',0)))}{\sin \gamma}> \sinh(d_h(o,(\rho',0))),
\]
where $\gamma$ is the angle between $l[o,y]$ and $l[(\rho',0),y]$ (see again Figure \ref{fig:triangles3})
 and since trivially, $y\neq o$.
\begin{figure}[h] 
\begin{center}
\includegraphics[page=3, clip=true, trim=60 180 600 50 , width=8cm]{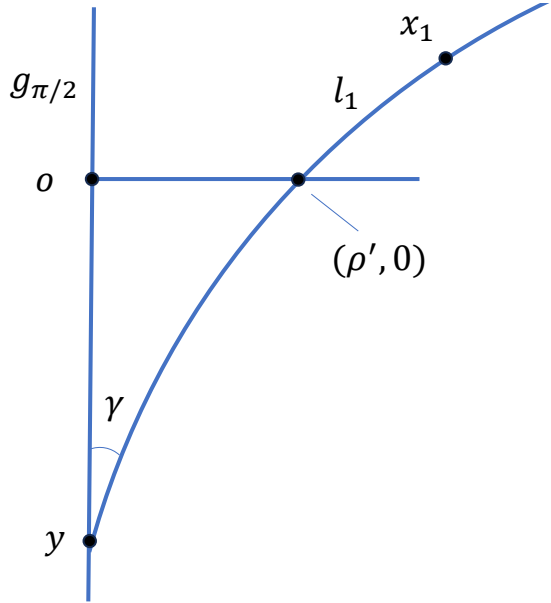} 
\caption{An illustration of the impossibility that $y\in g_{\pi/2} $.} \label{fig:triangles3}   
\end{center}
\end{figure}
Therefore, $d_h(y,(\rho',0)) > d_h(o,(\rho',0))\geq L/4$ by assumption. 
It follows that $
|l^-_1|\geq d_h(x_1,(\rho',0))+d_h((\rho',0),y)>L/4+L/4=L/2$ contradicting that the length of 
$l^-_1$ equals $L/2.$
\end{proof}

We can now turn to the proof of the main result of this subsection.
\begin{proof}[Proof of Proposition \ref{prop:half-plane}]
We will couple the stick process with a Galton-Watson process in such a way that 
the survival of the Galton-Watson process implies the existence of an unbounded connected component
in the stick process.
We will see that if we consider the stick process with parameters 
$\lambda$ and $L,$
it suffices to let $\lambda \geq \frac{\sqrt{3}-1}{32 \pi}L^{-2}$ 
in order to guarantee that the mentioned
Galton-Watson process is supercritical.

We shall start by describing the coupling and then address when the coupled Galton-Watson 
process is supercritical.
Let $l_0=l_L(o,0)$ be as before, we think of this as the root in the corresponding 
Galton-Watson tree. 
If 
\[
\omega^{\lambda,L}\cap \CL^L([L/4,L/2]\times [\pi/6,\pi/3]\times [L/4,L/2]) \neq \emptyset,
\]
then pick a stick $l_1$ in this set using some arbitrary rule (if there is more than 
one to choose between). 
Similarly, if 
\[
\omega^{\lambda,L}\cap \CL^L([L/4,L/2]\times [\pi-\pi/3,\pi-\pi/6]\times [-L/4,-L/2]) \neq \emptyset,
\]
pick some stick $l_{-1}$ again using some rule. It is clear that the existence of 
$l_1$ and $l_{-1}$ are independent since the center-points of these sticks belong to disjoint 
parts of $\BH^2.$ We think of $l_1$ and $l_{-1}$ (if they exist) as generation 1 in the 
Galton-Watson tree. 

We will let 
\begin{align} \label{eqn:qdef}
 q& =\BP(\omega^{\lambda,L}\cap \CL^L([L/4,L/2]\times [\pi/6,\pi/3]\times [L/4,L/2])\neq \emptyset) \\
& =1-\exp(-\lambda \mu(\CL^L([L/4,L/2]\times [\pi/6,\pi/3]\times [L/4,L/2]))) =1-\exp\left(-\lambda \frac{\sqrt{3}-1}{32 \pi}L^2\right), \nonumber
\end{align}
where we used \eqref{eqn:muofsticks} in the last equality. Thus, $q$ is the probability 
of finding such a stick $l_1,$ and we note that by symmetry, the probability of finding 
a stick $l_{-1}$ must also be $q.$
If there are no members in generation 1, then the coupling ends, and of course the Galton-Watson 
tree is then finite. If however $l_1$ belongs to generation 1, consider the unique 
orientation-preserving isometry 
$\CI_1$ mapping $l_1$ onto $l_0$ and $H_1$ onto $H_0.$ Then, we look for sticks 
$l_{1,1},l_{1,-1} \in \omega^{\lambda,L}$ such that 
\[
\CI_1(l_{1,1})\in \CL^L([L/4,L/2]\times [\pi/6,\pi/3]\times [L/4,L/2])
\]
and 
\[
\CI_1(l_{1,-1})\in \CL^L([L/4,L/2]\times [\pi-\pi/3,\pi-\pi/6]\times [-L/4,-L/2]).
\]
Observe that by Lemma \ref{lemma:half-planes},
$\CI_1(l_{1,1})\subset H_0$ so that $l_{1,1}\subset H_1.$ Therefore, 
$l_{1,1}$ does not belong to $\CL^L([L/4,L/2]\times [\pi/6,\pi/3]\times [L/4,L/2])$ 
(but of course $\CI_1(l_{1,1})$ does) and so $l_{1,1}$
would not have been encountered when exploring $\omega^{\lambda,L}$ looking for $l_1.$ Therefore, 
given $l_1,$ the probability of finding $l_{1,1}$ is again $q.$ Furthermore, $l_{1,1}$ and $l_{1,-1}$
are independent for the same reason that $l_1$ and $l_{-1}$ was independent.
Similarly, if $l_{-1}$ belongs to generation 1, 
we consider the unique orientation-preserving isometry $\CI_{-1}$ which maps $l_{-1}$ onto $l_0$ and 
$H_{-1}$ onto $H_0.$ Then, we look for sticks $l_{-1,1},l_{-1,-1} \in \omega^{\lambda,L}$ 
such that 
\[
\CI_{-1}(l_{-1,1})\in \CL^L([L/4,L/2]\times [\pi/6,\pi/3]\times [L/4,L/2])
\]
and 
\[
\CI_{-1}(l_{-1,-1})\in \CL^L([L/4,L/2]\times [\pi-\pi/3,\pi-\pi/6]\times [-L/4,-L/2]).
\]
The sticks (if they exist) are subsets of 
$H_{-1},$ and since Lemma \ref{lemma:half-planes} tells us that $H_1 \cap H_{-1}=\emptyset,$
it follows that the existence of $l_{-1,1}$ and $l_{-1,-1}$ are independent of the existence
of $l_{1,1}$ and $l_{1,-1}.$ The potential sticks in generation 2 are therefore 
$l_{1,1},l_{1,-1},l_{-1,1},l_{-1-1}$, and they belong to generation 2
independently and with probability $q,$ given the members of generation 1.

Given the members of generation 2 (if any exists), we now proceed in the obvious way to look for 
$l_{1,1,1},\ldots l_{-1,-1,-1},$ the potential members of generation 3. Then, we proceed 
further to find generation $n$ given that there were members of generation $n-1.$ This 
procedure provides the coupling of the stick process with a Galton-Watson tree. 
Furthermore, it is clear from the construction that the number of offspring 
from any individual is 
binomially distributed with parameters 2 and $q.$ 
The expected number of offspring in this 
Galton-Watson tree is clearly $2q,$ and we have from \eqref{eqn:qdef} that if 
\[
\lambda \geq \frac{32 \pi}{\sqrt{3}-1}L^{-2},
\]
then
\[
2q = 2\left(1-\exp\left(-\lambda \frac{\sqrt{3}-1}{32 \pi}L^2\right)\right)
\geq 2\left(1-\exp\left(-1\right)\right)>1.
\]
Therefore, the corresponding Galton-Watson process is supercritical and survives 
with positive probability, which shows the second statement of the proposition.
Finally, it is well known that the survival probability of the 
Galton-Watson process
goes to 1 as $q \uparrow  1$, and we see that for $C<\infty$ large enough, 
$q$ from \eqref{eqn:qdef} can be made arbitrarily close to one. This shows the 
first statement of the proposition and concludes the proof.
\end{proof}
\noindent
{\bf Remark:} It is clear from the proof of Proposition \ref{prop:half-plane} that 
the constant $\frac{\sqrt{3}-1}{32 \pi}$ can be improved upon. For instance, the conditions 
on $l_1,l_{-1}$ are unnecessarily restrictive, and $q>1$ for smaller values of $\lambda$
than those stated. It seems conceivable that by additional effort or perhaps all together 
new ideas, one might be able to remove these restrictions all together. In that way, one 
might find that the upper bound in fact matches the lower bound 
$\frac{\pi}{2}L^{-1}$. Unfortunately, we could 
not at this point accomplish this, and so instead of improving the bound slightly, we opted 
for relative simplicity.

\medskip

We have the following corollary to Proposition \ref{prop:half-plane}. 
Since the result is intuitively obvious given Proposition \ref{prop:half-plane}, and since 
the proof is 
very similar to that of Proposition \ref{prop:half-plane}, we only provide a sketch.
\begin{corollary} \label{cor:seedperc}
For any $\delta>0$ and any $0<p<1$, there exists a constant $0<C(p,\delta)<\infty$ such that 
for any $L$ large enough, and any $\lambda \geq C L^{-2}$
\[
\BP(\left(\CC_{|H_0}\right)_{l[o,(\delta L,0)]} \textrm{is unbounded}) \geq p.
\]
\end{corollary}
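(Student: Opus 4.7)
The plan is to adapt the branching-process coupling from the proof of Proposition \ref{prop:half-plane} to the smaller seed $l[o,(\delta L,0)]$. The only step that changes is the search for the ``root'' stick: instead of starting from the deterministic $l_0$ of length $L$, I would start from a random stick $l_1\in\omega^{\lambda,L}$ that intersects the seed and that, together with the perpendicular geodesic through its center, reproduces up to isometry the $(l_0,H_0)$ configuration of Proposition \ref{prop:half-plane} inside $H_0$. The rest of the argument (the branching tree, the independence between generations, and the supercriticality analysis) then transfers without change.

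Concretely, I would first look for $l_1\in\omega^{\lambda,L}$ satisfying three conditions: that $l_1\subset H_0$; that $l_1\cap l[o,(\delta L,0)]\neq\emptyset$; and that the half-plane $H_1$ bounded by the geodesic perpendicular to $l_1$ at its center (on the side not containing the origin $o$) is contained in $H_0$. Using Lemma \ref{lemma:altPoisson} with quantitative restrictions on $(\rho',\varphi,r)$ analogous to those of Proposition \ref{prop:half-plane} but with $\rho'\in(0,\delta L]$, one checks that the $\mu$-measure of admissible sticks is a positive constant depending on $\delta$ times $L^2$, so for $\lambda\ge CL^{-2}$ with $C=C(p,\delta)$ large enough, the probability of finding such $l_1$ is arbitrarily close to $1$. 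Given such an $l_1$, isometry invariance of $\omega^{\lambda,L}$ combined with Proposition \ref{prop:half-plane} applied to the pair $(l_1,H_1)$ yields that the cluster of $l_1$ in $\CC_{|H_1}\cup\{l_1\}$ is unbounded with probability arbitrarily close to $1$. Since $l_1\subset H_0$ implies $l_1\in\CC_{|H_0}$ and $l_1$ meets the seed, this unbounded cluster lies inside $(\CC_{|H_0})_{l[o,(\delta L,0)]}$. The search for $l_1$ and the branching inside $H_1$ use disjoint sets of Poisson sticks, so conditioning on $l_1$ and using standard Poisson independence then yields the desired lower bound $p$.

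The main obstacle is ensuring $l_1\subset H_0$ when $\delta$ is small. The argument of Lemma \ref{lemma:half-planes} that gives $l_1\subset H_0$ uses the hypothesis $\rho'\ge L/4$, which fails whenever $\delta<1/4$. To handle this case I would impose a tighter constraint on $r$, keeping $r$ close to $L/2$ so that the portion of $l_1$ extending past the intersection point $(\rho',0)$ has length much smaller than $\rho'$, and then reuse the hyperbolic law of sines argument in Lemma \ref{lemma:half-planes} to conclude that this short extension cannot reach $\partial H_0=g_{\pi/2}$. This additional restriction reduces the $\mu$-measure of admissible sticks by a further constant depending on $\delta$, but the order-$L^2$ scaling is preserved, so the search step still succeeds with $C=C(p,\delta)$ large enough, which is all we need since the constant is allowed to depend on $\delta$.
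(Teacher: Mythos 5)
Your proposal is correct and follows essentially the same strategy as the paper's (quite brief) sketch: find an auxiliary stick hitting the shortened seed and then run the branching coupling of Proposition~\ref{prop:half-plane} from it, using disjointness of the half-plane containing the branching tree from the region where the auxiliary stick was searched for to get independence. The one place where you go beyond the paper's sketch is the observation that the argument in Lemma~\ref{lemma:half-planes} for $l_1\subset H_0$ relies on $\rho'\geq L/4$ and hence fails outright once $\delta<1/4$; your fix of forcing $r$ close to $L/2$ (so the piece of the stick past the intersection point has length at most $\rho'$, and the hyperbolic law-of-sines argument again shows it cannot reach $g_{\pi/2}$) is a correct and natural way to patch this, keeps the $\mu$-measure of admissible sticks of order $\delta^2 L^2$, and also makes the angle $\alpha$ in the $H_1\subset H_0$ argument even smaller. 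You should, for completeness, also keep the angle restriction $\varphi$ bounded away from both $0$ and $\pi/2$ (as in Proposition~\ref{prop:half-plane}) so that the law-of-cosines part of Lemma~\ref{lemma:half-planes} gives $H_1\subset H_0$, but that is implicit in your phrase ``analogous to those of Proposition~\ref{prop:half-plane}''.
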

\begin{proof}[Sketch of Proof]
The only difference between this statement and Proposition \ref{prop:half-plane} is the 
starting condition. Here, we consider $l[o,(\delta L,0)]$ instead of $l[o,(L/2,0)].$ In order 
to compensate for this, it suffices to find an auxiliary line $l_L\in \omega^{\lambda,L}$ 
such that $l_L \subset H_0$ and which intersects $l[o,(\delta L,0)]$ at a small angle
so that it essentially points ``straight ahead''. By taking $C(p,\delta)<\infty$ large enough,
it is clear that such an auxiliary line exists with probability arbitrarily close to one.
Then, this auxiliary line can be used 
as the new starting condition.
\end{proof}

The following lemma will be used in Section \ref{sec:lambdau}, but since it concerns 
percolation in a half-plane, we choose to state and prove it here.
\begin{lemma} \label{lemma:half-planeperc}
For any $\lambda>0$ and $0<L<\infty$ such that  
\[
\BP(\CC_{|H_0}(\omega^{\lambda,L}) \textrm{ contains an unbounded connected component})>0,
\]   
we have that in fact 
\[
\BP(\CC_{|H_0}(\omega^{\lambda,L}) \textrm{ contains an unbounded connected component})=1.
\]
\end{lemma}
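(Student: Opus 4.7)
The plan is to adapt the standard $0/1$-law argument used in the full-plane case (Proposition~2.1 of \cite{BT_15}) by restricting attention to the subgroup of isometries that preserve the half-plane $H_0$. Let $G_0$ denote the subgroup of isometries of $\BH^2$ that map $H_0$ onto itself, i.e.\ isometries which fix the boundary geodesic $g_{\pi/2}=\partial H_0\cap \BH^2$ setwise and do not swap the two sides. Inside $G_0$ sits the one-parameter subgroup $T=(\tau_t)_{t\in \BR}$ of translations along $g_{\pi/2}$. Every element of $G_0$ preserves the set $\{l_L\in \CL^L: l_L\subset H_0\}$, since isometries map sticks to sticks and $l_L\subset H_0$ if and only if $\CI(l_L)\subset \CI(H_0)=H_0$ for $\CI\in G_0$.

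First, I would observe that the event
\[
A=\{\CC_{|H_0}(\omega^{\lambda,L})\text{ contains an unbounded connected component}\}
\]
is $G_0$-invariant, and in particular $T$-invariant, since $\CC_{|H_0}$ is built only from those sticks of $\omega^{\lambda,L}$ lying entirely in $H_0$, and connectedness and boundedness are preserved by isometries. Next, by the restriction theorem (Theorem~5.2 of \cite{LP_18}), the point process
\[
\omega^{\lambda,L}_{H_0}:=\{l_L\in\omega^{\lambda,L}: l_L\subset H_0\}
\]
is a Poisson point process on $\CL^L(H_0):=\{l_L\in\CL^L: l_L\subset H_0\}$ with intensity measure $\lambda \mu$ restricted to $\CL^L(H_0)$. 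Since $\mu$ is $G_0$-invariant (as discussed in Section~\ref{sec:Poistick}) and $\CL^L(H_0)$ is preserved by $G_0$, the process $\omega^{\lambda,L}_{H_0}$ is $T$-invariant in distribution.

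The main step is to show that the measure-preserving action of $T$ on $\omega^{\lambda,L}_{H_0}$ is ergodic. By the standard criterion for ergodicity of a stationary Poisson process (see e.g.\ Proposition~2.1 of \cite{BT_15} and the references therein, or Kallenberg), it suffices to verify that $T$ has no measurable invariant subsets of $\CL^L(H_0)$ of finite positive $\mu$-measure. This is the only place where the half-plane character matters; but any $T$-invariant set $\CA\subset \CL^L(H_0)$ of positive $\mu$-measure must also contain $\tau_t(\CA)$ for all $t\in\BR$. Since $T$ acts freely on $\CL^L(H_0)$ with each orbit escaping to infinity along $g_{\pi/2}$, a positive-measure invariant set contains translates of a positive-measure ``slab'' and therefore has infinite measure. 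This establishes ergodicity of the $T$-action on $\omega^{\lambda,L}_{H_0}$.

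Having ergodicity together with $T$-invariance of $A$ yields $\BP(A)\in\{0,1\}$, and the hypothesis $\BP(A)>0$ then forces $\BP(A)=1$, as desired. The hardest (or at least the most delicate) step is the ergodicity verification: one has to be careful that restricting to $H_0$ does not create finite-measure invariant pieces, which is why I insist on $T$ rather than the larger group $G_0$ (which contains an order-two reflection that could in principle create invariant sets). Once ergodicity is in hand, the rest of the proof is just assembling the ingredients, mirroring the full-plane argument of \cite{BT_15}.
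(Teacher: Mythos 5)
Your argument is correct, but it takes a genuinely different route from the paper. Where you invoke ergodicity of the Poisson suspension under the one-parameter group of translations along $\partial H_0\cap\BH^2$ (using the criterion that no $T$-invariant subset of $\CL^L(H_0)$ has finite positive $\mu$-measure, which indeed holds since the $T$-orbits are totally dissipative), the paper gives a short elementary argument that exploits hyperbolic geometry directly: for any $N$ one can choose $N$ pairwise disjoint half-planes $H_1,\ldots,H_N\subset H_0$, and since percolation in $\CC_{|H_k}$ implies percolation in $\CC_{|H_0}$ while the events $\{\CC_{|H_k}\textrm{ percolates}\}$ are independent and, by isometry-invariance, each has the same probability $p$, one gets $\BP(\CC_{|H_0}\textrm{ percolates})\geq 1-(1-p)^N\to 1$. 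Both approaches are valid; the paper's is shorter and self-contained (it does not require a Poisson-suspension ergodicity theorem, only the trivial fact that in $\BH^2$ a half-plane contains arbitrarily many disjoint half-planes, which has no Euclidean analogue), whereas yours is more systematic and would also apply in contexts without this packing feature. Two small cautions on your route: the ergodicity criterion you use (no invariant sets of finite positive measure in the base implies ergodicity of the Poisson suspension) is a real theorem but not the ``standard criterion'' that Proposition~2.1 of \cite{BT_15} rests on, so you should cite it precisely rather than gesturing at \cite{BT_15} or Kallenberg; and your worry about the order-two reflection in $G_0$ is unnecessary, since restricting to $T$ alone suffices and the reflection would not in fact create finite-measure invariant sets.
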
 
\begin{proof}
Consider a sequence $H_1,\ldots, H_N \subset H_0$ of disjoint half-planes. By invariance
we have that 
\begin{align*}
& \BP(\CC_{|H_0}(\omega^{\lambda,L}) \textrm{ contains an unbounded connected component}) \\
& \geq 1-
\prod_{k=1}^N \BP(\CC_{|H_k}(\omega^{\lambda,L}) \textrm{ does not contain an unbounded connected component})  \\
&=1-\BP(\CC_{|H_0}(\omega^{\lambda,L}) \textrm{ does not contain an unbounded connected component})^N,
\end{align*}
from which the statement follows by letting $N \to \infty.$
\end{proof}

We end this section with a remark mentioned in the introduction.

\noindent
{\bf Remark:}
We believe that Proposition \ref{prop:lambdaclowerbound} could be generalized to $\BH^d$
where $d\geq 3$ in a fairly straightforward manner. Of course, in $\BH^d$ one would have to consider sticks
of non-zero width (say width 1) in order for the sticks to be able to intersect. The coupling 
argument of Proposition \ref{prop:lambdaclowerbound} would still go through, but the calculation in 
\eqref{eqn:Epsi} would have to be adjusted. However, since both the ``target'' stick and the sticks 
that could hit it have length $L,$ the result would presumably be the same when $d\geq 3.$
When it comes to Proposition \ref{prop:half-plane} there is no reason why the tree-like embedding
should not work also for $\BH^d$ where $d\geq 3.$ The calculations would presumably increase 
in complexity, but the essentials such as \eqref{eqn:qdef} would remain. Thus, we conjecture 
that some version of Theorem \ref{thm:lambdacmain} holds also for $\BH^d$ when $d\geq 3.$

\section{Proof of Theorem \ref{thm:lambdaumain}} \label{sec:lambdau}
In this section we will prove Theorem \ref{thm:lambdaumain}. We establish the two 
directions of Theorem \ref{thm:lambdaumain} in two subsections. For both directions, 
the quantity
\begin{equation} \label{eqn:Vdef}
\CV:=\BH^2 \setminus \CC,    
\end{equation}
which is the vacant set, will play an important role.

\subsection{The lower bound of Theorem \ref{thm:lambdaumain}}
In this section we will establish the lower bound on $\lambda_u(L)$  
in the following proposition.
\begin{proposition} \label{prop:lambdaulower}
For any $0<L<\infty$ large enough, we have that $\lambda_u(L)
\geq \frac{\pi}{2}L^{-1}.$
\end{proposition}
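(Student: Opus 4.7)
The plan is to show that for $\lambda < \frac{\pi}{2}L^{-1}$ (and $L$ large) the set $\CC(\omega^{\lambda,L})$ fails to have a unique unbounded connected component with positive probability, so in particular the 0-1 event $\{\CC \text{ has a unique unbounded component}\}$ has probability $0$. The strategy is to couple $\omega^{\lambda,L}$ to an associated Poisson line process in $\BH^2$ and transfer a known non-uniqueness result from the latter through planarity, which matches the remark in the introduction that the lower bound relies on a result known only in the planar case.

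First I would use Lemma \ref{lemma:altPoisson} to note that, for any geodesic $g\subset\BH^2$, the intersection points of sticks of $\omega^{\lambda,L}$ with $g$ form a Poisson process on $g$ of intensity
\[
\alpha \ :=\ \frac{\lambda}{\pi}\int_0^\pi \sin\varphi\,\dd\varphi \int_{-L/2}^{L/2}\dd r \ =\ \frac{2\lambda L}{\pi}
\]
per unit hyperbolic length on $g$. The threshold $\lambda<\tfrac{\pi}{2}L^{-1}$ becomes exactly $\alpha<1$, which (as we will see) is the critical value for non-uniqueness of the naturally associated Poisson line model.

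Next I would introduce the augmented process $\tilde\omega^{\lambda,L}$ obtained by replacing each stick $l_L(x,\phi)\in\omega^{\lambda,L}$ by the bi-infinite geodesic line containing it, and set $\tilde\CC:=\bigcup_{l\in\tilde\omega^{\lambda,L}}l$ and $\tilde\CV:=\BH^2\setminus\tilde\CC$. The containment $\CC\subseteq\tilde\CC$ gives $\tilde\CV\subseteq\CV$. By Lemma \ref{lemma:altPoisson}, the induced line process has its intersection density with any geodesic equal to $\alpha<1$ in the standard normalization. I would then invoke the planar non-uniqueness result of \cite{BT_15} for the hyperbolic Poisson cylinder/line model: at this subcritical line-intensity, $\tilde\CV$ almost surely contains an unbounded connected component $V$ whose closure in $\BH^2\cup\partial\BH^2$ meets $\partial\BH^2$ in at least two distinct points. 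Being $2$-dimensional and having two limit points on the boundary circle, such a $V$ topologically separates $\BH^2$ into (at least) two unbounded open regions $\Omega_1,\Omega_2$, and the inclusion $\tilde\CV\subseteq\CV$ means the same separator sits inside $\CV$.

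Finally, for $\lambda>\lambda_c(L)$ I would run a half-plane percolation argument in the spirit of Corollary \ref{cor:seedperc} together with Lemma \ref{lemma:half-planeperc} and ergodicity: since each $\Omega_i$ contains an isometric image of a half-plane, and since percolation in a half-plane is a $0/1$ event that is triggered as soon as $\lambda>\lambda_c(L)$, each $\Omega_i$ contains an unbounded component of $\CC$ with positive probability. Those components cannot merge across the vacant separator $V\subseteq\CV$, yielding at least two unbounded components of $\CC$ and hence failure of uniqueness; for $\lambda\le\lambda_c(L)$ there are no unbounded components at all and uniqueness fails trivially. Combining the two cases gives $\lambda_u(L)\geq\tfrac{\pi}{2}L^{-1}$. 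The main obstacle is aligning the normalization of \cite{BT_15} with $\alpha=\tfrac{2\lambda L}{\pi}$ so that $\alpha<1$ really does lie in the planar non-uniqueness regime, and verifying that the vacant separator from the line model genuinely produces unbounded $\CC$-components on both sides; planarity of $\BH^2$ is essential, because in higher dimensions an unbounded $2$-ended vacant set need not disconnect the ambient space.
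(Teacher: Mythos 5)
Your proposed coupling does not work: \emph{extending every stick to the full geodesic containing it yields a line process that is not locally finite}. Concretely, a stick $l_L(x,\phi)$ gives rise to a line that crosses a fixed bounded set $K$ whenever $x$ lies anywhere along a geodesic through $K$ in direction $\phi$; that set of centres has infinite $v^h$-measure for each $\phi$, so a.s.\ infinitely many sticks of $\omega^{\lambda,L}$ produce lines through $K$. Thus $\tilde\CC=\BH^2$ and $\tilde\CV=\emptyset$ a.s., and the statement ``the induced line process has its intersection density with any geodesic equal to $\alpha = \tfrac{2\lambda L}{\pi}$'' is false (Lemma~\ref{lemma:altPoisson} gives the intensity of \emph{stick} intersection points with $g_0^+$, and your extension adds infinitely many additional crossing points per unit length). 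There is no way to inherit a result about a constant-intensity Poisson line process from \cite{BT_15} along these lines; the cylinder analogy in the introduction explicitly requires a separate thinning by $L^{-1}$, which your construction does not perform. A secondary issue: you conclude by running the half-plane argument inside the \emph{random} regions $\Omega_1,\Omega_2$ bounded by the vacant separator, but these depend on the process; the paper sidesteps this by first exhibiting (with positive probability) a vacant geodesic joining two \emph{deterministic} disjoint arcs of $\partial\BH^2$, then fixing two \emph{deterministic} half-planes $H_1,H_2$ separated by any such geodesic, and invoking the $0$--$1$ law of Lemma~\ref{lemma:half-planeperc} to get percolation in each $H_i$ almost surely.

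The paper's actual route never passes through a line process at all. It works directly with the vacant set: after an $\epsilon$-shrinking to make $\CV_\epsilon$ closed and ``well behaved'', it invokes Lemma~3.5 of \cite{BJST_09}, which says that if the exponential rate $\alpha := \lim_R -\tfrac{1}{R}\log\BP(l[o,(R,0)]\subset\CV_\epsilon)$ is $<1$, then $\CV_\epsilon$ a.s.\ contains geodesics. That rate is bounded by $\tfrac{2\lambda L}{\pi}$ via Lemma~\ref{lemma:altPoisson}, giving exactly the threshold $\lambda<\tfrac{\pi}{2}L^{-1}$ you identified. So your heuristic normalization points to the right threshold, but the mechanism you propose for exploiting it is unsound; the correct mechanism is the visibility-to-infinity criterion of \cite{BJST_09} applied to the stick vacant set itself.
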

The main idea of this subsection is fairly straightforward. We will establish 
that for $\lambda<\frac{\pi}{2}L^{-1},$ the vacant set $\CV$ a.s.\,contains geodesics.  
These geodesics ``separate'' half-planes that because of Lemma \ref{lemma:half-planeperc}
and Proposition \ref{prop:half-plane} must contain unbounded connected components, 
and therefore there are more than one such component. 

\medskip

The first step is to prove the following lemma.
\begin{lemma} \label{lemma:CVgeodesics}
For any $0<L<\infty$ and any $\lambda<\frac{\pi}{2}L^{-1},$ we have that $\CV$ contains 
geodesics with probability one.
\end{lemma}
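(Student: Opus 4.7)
My plan is to establish Lemma \ref{lemma:CVgeodesics} in three logical moves: reduce to a positive-probability statement via a 0-1 law, extract the sub-critical crossing intensity from Lemma \ref{lemma:altPoisson}, and then produce a geodesic in $\CV$ by passing to a Crofton/boundary description. The event $\{\CV \text{ contains a geodesic}\}$ is invariant under the isometries of $\BH^2,$ so the same ergodicity argument cited in Section \ref{sec:stickprocess} gives that it has probability $0$ or $1,$ and it suffices to produce positive probability.

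The quantitative input is Lemma \ref{lemma:altPoisson}. By isometry invariance I may work with a generic geodesic (not just $g_0^+$). Integrating the density $\frac{1}{\pi}\,d\rho'\otimes \sin\varphi\, d\varphi \otimes dr$ over the ``internal'' variables $\varphi\in[0,\pi)$ and $r\in[-L/2,L/2]$ gives that the intersection points of sticks of $\omega^{\lambda,L}$ with any fixed geodesic form a homogeneous Poisson point process of intensity
\[
\frac{\lambda}{\pi}\int_0^\pi \sin\varphi\, d\varphi \int_{-L/2}^{L/2} dr \;=\; \frac{2\lambda L}{\pi}
\]
per unit hyperbolic length, and the hypothesis $\lambda<\frac{\pi}{2}L^{-1}$ is exactly the sub-unit condition $\frac{2\lambda L}{\pi}<1.$ Note that this rules out any particular fixed geodesic from lying in $\CV$ (the intensity is strictly positive), so the geodesic whose existence I want is necessarily an exceptional one.

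The key sufficient condition I plan to use is that for each stick $l_L\in\omega^{\lambda,L},$ its full geodesic extension $\tilde g(l_L)$ satisfies $l_L\subset\tilde g(l_L);$ hence any geodesic $g'$ that is asymptotic or ultra-parallel to every $\tilde g(l_L)$ lies in $\CV.$ Parametrizing unoriented geodesics by their unordered endpoint pairs in $\partial\BH^2\cong S^1,$ this non-intersection condition becomes a combinatorial non-crossing-chord condition: find $\{\eta_1,\eta_2\}\subset\partial\BH^2$ that does not interleave with $\{\xi_1(l_L),\xi_2(l_L)\}$ for any $l_L\in\omega^{\lambda,L}.$

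The main obstacle is showing such an $\{\eta_1,\eta_2\}$ exists with positive probability, since $\widetilde\omega:=\{\tilde g(l_L)\}$ is not itself a Poisson process on the space of geodesics (the fibers of the extension map are one-dimensional). My plan here is to couple the crossing statistics of $\widetilde\omega$ with those of a Poisson line process in $\BH^2$ of matching crossing intensity and then invoke the planar result from \cite{BT_15} (the result referenced in the introduction as having been proven only in the planar case), which asserts that in the subcritical regime the vacant set of such a line process a.s. contains geodesics. Equivalently, via Crofton-type duality, the sub-unit crossing intensity $\frac{2\lambda L}{\pi}<1$ should force the random chord configuration on $\partial\BH^2$ to be ``thin enough'' that non-crossing chords exist. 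Translating the planar result to the stick setting, composed with the trivial inclusion ``missed by every extension $\Rightarrow$ missed by every stick,'' then yields a geodesic in $\CV$ with positive probability, completing the proof via the 0-1 law.
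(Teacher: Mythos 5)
Your proposal contains a genuine gap at the central step. The sufficient condition you set up --- find a geodesic $g'$ that is ultra-parallel or asymptotic to the \emph{full geodesic extension} $\tilde g(l_L)$ of \emph{every} stick $l_L\in\omega^{\lambda,L}$, then observe $g'\subset\CV$ --- is correct as an implication, but the hypothesis is almost surely unattainable, so it cannot carry the proof. The issue is that while the sticks themselves are finite, their geodesic extensions are not, and this changes the crossing statistics qualitatively. The extension map $(x,\phi)\mapsto\tilde g(l_L(x,\phi))$ has one-dimensional fibers (you can slide the stick anywhere along its supporting geodesic), so the pushforward of $\lambda\mu$ to the $2$-dimensional space of unoriented geodesics is an infinite multiple of the Crofton measure. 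Concretely: the set of geodesics crossing any fixed unit segment of a reference geodesic has positive Crofton measure, hence infinite mass under the pushforward, and therefore a.s.\ \emph{every} geodesic in $\BH^2$ is crossed by infinitely many of the extensions $\tilde g(l_L)$. There is a.s.\ no $\{\eta_1,\eta_2\}\subset\partial\BH^2$ non-interleaved with every endpoint pair from $\widetilde\omega$. This is precisely the difference between the stick model and the line model: the line model has the $\frac{2\lambda L}{\pi}$--per--unit--length crossing rate that you computed \emph{only if you keep the sticks finite}, and you gave that away when you passed to extensions.

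The fix, which is the approach the paper actually takes, is to never pass to extensions: work directly with (a slightly shrunk closed version of) the vacant set $\CV$, verify that it is a ``well behaved'' random closed set, and show that the exponential decay exponent $\alpha=\lim_{R\to\infty} -\log\BP(l[o,(R,0)]\subset\CZ)/R$ satisfies $\alpha<1$. Your computation of the crossing intensity $\frac{2\lambda L}{\pi}$ per unit length is exactly the content of that $\alpha$ bound (since hits along a geodesic segment are a Poisson process of that rate, $\BP(l[o,(R,0)]\subset\CV)\approx e^{-\frac{2\lambda L}{\pi}R}$, so $\alpha=\frac{2\lambda L}{\pi}<1$ when $\lambda<\frac{\pi}{2}L^{-1}$). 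The criterion ``$\alpha<1\Rightarrow$ geodesics in the vacant set a.s.'' then does the rest. Two small additional corrections: this criterion is Lemma 3.5 of \cite{BJST_09}, not a result of \cite{BT_15} (which concerns Poisson cylinders); and since $\CV$ is open you need to pass to $\CV_\epsilon$ (complement of $\epsilon$-thickened sticks) to have a closed set to which the well-behavedness hypotheses apply, taking $\epsilon$ small so that $\alpha(\CV_\epsilon)$ stays below $1$. Your 0--1 law reduction is fine but also unnecessary once the \cite{BJST_09} lemma is invoked, since it gives the a.s.\ statement directly.
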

The proof relies on a result from \cite{BJST_09} which we now present.
A random closed subset $\CZ\subset \BH^2$ is said to be 
{\em well behaved} if it satisfies the 
following conditions.
\begin{enumerate}
    \item $\CZ$ is invariant in distribution under isometries of $\BH^2.$

    \item The set $\CZ$ satisfies positive correlations, i.e. for every $f_1,f_2$ 
    such that $f_1,f_2$ are increasing and bounded functions of $\CZ$ it holds that
    \[
    \BE[f_1(\CZ)f_2(\CZ)] \geq \BE[f_1(\CZ)]\BE[f_2(\CZ)].
    \]

    \item There exists some $R_0<\infty$ such that $\CZ$ satisfies independence at distance $R_0.$
    That is, for every $A,A'\subset \BH^2$ such that 
    $d_h(a,a')\geq R_0 \ \forall \ (a,a')\in A \times A'$ we have that
    $\CZ\cap A$ and $\CZ\cap A'$ are independent.

    \item The expected number of connected components in $B^h(o,1)\cap \partial \CZ$ is 
    finite.

    \item The expected length of $B^h(o,1)\cap \partial \CZ$ is finite.

    \item $\BP(B^h(o,1))\subset \CZ)>0$.
\end{enumerate}
Next, let 
\begin{equation} \label{eqn:alphadef}
\alpha:=\lim_{R \to \infty} -\frac{\log \BP(l[o,(R,0)]\subset \CZ)}{R},    
\end{equation}
which exists by Lemma 3.3 of \cite{BJST_09}. Then, Lemma 3.5 of \cite{BJST_09} states that if $\alpha<1,$
then a.s. $\CZ$ contains geodesics.

We would like to apply this result to $\CV=\BH^2 \setminus \CC$, and conclude that 
it contains geodesics. However, since $\CV$ is not a closed set 
we will have to do some work. To that end, let for any $A\subset \BH^2$ 
\[
A^\epsilon:=\{x\in \BH^2: d_h(x,A)<\epsilon\},
\]
denote the $\epsilon$-enlargement of $A.$ Then, let
$\CV_\epsilon$ to be the complement of an enlarged version of $\CC$
defined by
\[
\CV_\epsilon
:=\BH^2 \setminus \left(\bigcup_{l_L\in \omega^{\lambda,L}} l_L^\epsilon\right).
\]
Defined in this way, $\CV_\epsilon$ is a closed set, and clearly $\CV_\epsilon \subset \CV$ so 
if $\CV_\epsilon$ contains geodesics, then so does $\CV$. 

\begin{proof}[Proof of Lemma \ref{lemma:CVgeodesics}]
It is easy to verify that $\CV_\epsilon$ is well behaved in the sense defined above. 
Indeed, condition 1 is satisfied since 
$\CC$ is invariant, while condition 2 follows from Lemma 2.1 of \cite{J_84}. Condition 3 
clearly holds since the sticks have length $L,$ while 4-6 are trivial. Therefore, we only 
need to show that whenever $\lambda<\frac{\pi}{2}L^{-1},$ it follows that 
$\alpha=\alpha(\lambda,\CV_\epsilon)$ of \eqref{eqn:alphadef} is strictly smaller 
than one for $\epsilon>0$ small enough.

First, let $\CV_\epsilon$ be as above and note that for any $\epsilon>0$ 
we have that 
\[
\{l_L^\epsilon(x,\phi)\cap l[o,(R,0)]\neq \emptyset\}
=\{l_L(x,\phi)\cap l[o,(R,0)]^\epsilon\neq \emptyset\},
\]
so that (recall the definition \eqref{eqn:defCLA} of $\CL^L(A)$) 
\[
\BP(l[o,(R,0)]\subset \CV_\epsilon)=\BP(\CC \cap l[o,(R,0)]^\epsilon\neq \emptyset)
=\exp(-\lambda \mu(\CL^L(l[o,(R,0)]^\epsilon))).
\]
Next, observe that
\[
\mu(\CL^L(l[o,(R,0)]^\epsilon))
\leq \mu\left(\bigcup_{k=0}^{\lfloor R \rfloor} \CL^L(l[(k,0),((k+1),0)]^\epsilon))\right)
\leq (R+1)\mu(\CL^L(l[o,(1,0)]^\epsilon)),
\]
by subadditivity and invariance of $\mu.$ 
Clearly,
\[
\lim_{\epsilon \to 0} \mu(\CL^L(l[o,(1,0)]^\epsilon))=\mu(\CL^L(l[o,(1,0)])),
\]
and so we conclude that for any $\delta>0$ there exists $\epsilon>0$ such that 
\begin{align*}
& \BP(l[o,(R,0)]\subset \CV_\epsilon) =\exp(-\lambda \mu(\CL^L(l[o,(R,0)]^\epsilon))) \\
& \geq \exp(-\lambda (R+1)\mu(\CL^L(l[o,(1,0)]^\epsilon)))  
\geq \exp(-\lambda (R+1)(1+\delta)\mu(\CL^L(l[o,(1,0)]))),      
\end{align*}
for every $R>0.$ We conclude that for any $\delta>0,$ we have that for every 
$\epsilon>0$ small enough, 
\[
\alpha(\lambda,\CV_\epsilon)
=\lim_{R \to \infty} - \frac{\BP(l[o,(R,0)]\subset \CV_\epsilon)}{R}
 \leq  \lim_{R \to \infty}  \frac{\lambda (R+1)(1+\delta)\mu(\CL^L(l[o,(1,0)]))}{R}
=  \frac{2\lambda}{\pi}(1+\delta) L,
\]
where we used Lemma \ref{lemma:altPoisson} to see that (recall the notation \eqref{eqn:defCL_alt})
\[
\mu(\CL^L(l[o,(1,0)]))
=\mu(\CL^L([0,1]\times [0,\pi)\times [-L/2,L/2] ))
=\frac{L}{\pi}\int_0^\pi \sin \varphi \dd \varphi=\frac{2L}{\pi}.
\]
Since $\delta>0$ was arbitrary, we see that if 
$\lambda<\frac{\pi}{2}L^{-1},$ we can conclude that $\alpha(\lambda,\CV_\epsilon)<1,$ 
for any $\epsilon>0$ small enough.
By \cite{BJST_09} Lemma 3.5, we conclude that $\CV_\epsilon$ contains geodesics, and 
therefore so does $\CV.$
\end{proof}

We can now prove Proposition \ref{prop:lambdaulower}.
\begin{proof}[Proof of Proposition \ref{prop:lambdaulower}]
First, we note that by the monotonicity explained in Section \ref{sec:stickprocess},
we may assume that $L^{-1}< \lambda<\frac{\pi}{2}L^{-1}$ which we do for convenience.

Next, by Lemma \ref{lemma:CVgeodesics}, $\CV$ contains geodesics with probability one. Therefore, 
there exists two disjoint arcs $I_1,I_2\subset \partial \BH^2$ such that 
with positive probability there is a geodesic in $\CV$ with endpoints in $I_1$ 
and $I_2.$ For a half-plane $H$, let $e(H)$ be the set of boundary points of $H$ which 
belong to $\partial \BH^2.$ Then, let $H_1,H_2$ be two half-planes such that 
$I_1,e(H_1),I_2,e(H_2)$ are all disjoint. Furthermore, choose $H_1,H_2$ so that 
the sets $I_1,e(H_1),I_2,e(H_2) \subset \partial \BH^2$ are situated in that order. 
Placed in this way, a geodesic in $\CV$ separates $H_1$ from $H_2.$

Then, take $L<\infty$
large enough so that $\lambda>L^{-1}>\frac{32\pi}{\sqrt{3}-1}L^{-2}.$ By 
Proposition \ref{prop:half-plane} and Lemma 
\ref{lemma:half-planeperc}, we conclude that both 
$H_1$ and $H_2$ contain an unbounded connected component of $\CC$ with 
probability one. Since these are (with positive probability) separated by the 
geodesic in $\CV,$ we see that the probability
that $\CC$ contains two unbounded components is positive. Hence, $\lambda<\lambda_u(L)$
from which the statement follows.
\end{proof}

\subsection{The upper bound of Theorem \ref{thm:lambdaumain}} 
In this section, we will prove the upper bound of Theorem \ref{thm:lambdaumain}. However, we will 
start by informally explaining the main idea. Here, we write 
\[
A_1 \stackrel{\CV}{\longleftrightarrow} A_2
\]
for the event that $\CV$ 
contains a connected component which intersects both $A_1,A_2\subset \BH^2.$ 
We will prove Lemma \ref{lemma:Vconnect} which shows that
$\BP(B^h(o,1) \stackrel{\CV}{\longleftrightarrow} B^h(x,1))$ is exponentially 
small in the distance $d_h(o,x).$ From this, it will easily follow that the probability of 
$\BP(B^h(o,1) \stackrel{\CV}{\longleftrightarrow} \partial B^h(o,R))$ decays exponentially in $R$
whenever $\lambda \geq 5 \sqrt{2} \pi L^{-1}$. 
We can then show that $\CV$ cannot contain any unbounded connected components separating distinct
connected components of $\CC$, implying uniqueness.

This is the main result of the subsection.
\begin{proposition} \label{prop:lambdauupperbound}
Let $\lambda \geq 5 \sqrt{2} \pi L^{-1}$. Then, for any $0<L<\infty$ large enough
\[
\BP(\CV(\omega^{\lambda,L}) \textrm{ contains an unbounded connected component })=0,
\]
and $\CC$ contains a unique unbounded component. 
Therefore, $\lambda_u(L) \leq 5 \sqrt{2} \pi L^{-1}.$
\end{proposition}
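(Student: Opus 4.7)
The plan is to establish the proposition in two stages, as sketched in the paragraph preceding the statement. Stage one is the content of the promised Lemma \ref{lemma:Vconnect}: for $\lambda \geq 5\sqrt{2}\pi L^{-1}$, the probability $\BP(B^h(o,1)\stackrel{\CV}{\longleftrightarrow}B^h(x,1))$ decays exponentially in $d_h(o,x)$ at a rate strictly greater than $1$. Stage two combines this with a union bound over $\partial B^h(o,R)$ (via Lemma \ref{lemma:ballsoncircle}) and a planar topology argument to force uniqueness of the unbounded component of $\CC$.

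For stage one, I would begin with the straight-segment estimate: by Lemma \ref{lemma:altPoisson}, a fixed straight segment of hyperbolic length $d$ is hit by sticks of total $\mu$-mass $\frac{2Ld}{\pi}$, so it lies in $\CV$ with probability $\exp(-\frac{2\lambda L}{\pi}d)$. With $\lambda = 5\sqrt{2}\pi L^{-1}$ this rate becomes $10\sqrt{2}$, leaving ample slack. A general vacant path between $B^h(o,1)$ and $B^h(x,1)$ need not be straight, so I would discretize: for $R=\lfloor d_h(o,x)\rfloor$ and each $k=1,\ldots,R-1$, place $N_k=\lceil \pi\sinh(k)\rceil$ equidistant points on $\partial B^h(o,k)$ as in Lemma \ref{lemma:ballsoncircle}, so the corresponding unit balls cover each circle. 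Any continuous vacant path crosses each circle inside one such unit ball. The second statement of Lemma \ref{lemma:ballsoncircle} (points at graph-distance $2$ on a circle lie at hyperbolic distance greater than $5/2$) ensures that only a bounded number of next balls on $\partial B^h(o,k+1)$ are geometrically reachable from a given ball on $\partial B^h(o,k)$, so the admissible ball-sequences are indexed by a tree of bounded branching and total count at most $C^R$. For each sequence the probability that all the short connecting segments are vacant is at most $\exp(-c\lambda LR)$ by the straight-segment estimate, and the choice of $\lambda$ ensures $c\lambda L > \log C + 1$, yielding exponential decay at a rate strictly above $1$.

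Stage two then covers $\partial B^h(o,R)$ by $\lceil \pi\sinh(R)\rceil \leq C'e^R$ unit balls and applies a union bound:
\[
\BP(B^h(o,1)\stackrel{\CV}{\longleftrightarrow}\partial B^h(o,R))
\;\leq\; C'e^R\cdot\max_{y\in \partial B^h(o,R)} \BP(B^h(o,1)\stackrel{\CV}{\longleftrightarrow}B^h(y,1)).
\]
The right-hand side is summable in $R$, so Borel--Cantelli gives that the $\CV$-component of $B^h(o,1)$ is a.s.\ bounded; by invariance every component of $\CV$ is bounded a.s. For the final uniqueness step, I would argue that if $\CC$ contained two distinct unbounded components, then planarity of $\BH^2$ together with the distinct traces of the two components on $\partial\BH^2$ would force an unbounded separator contained in $\CV$, contradicting boundedness of vacant components. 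Combined with Proposition \ref{prop:half-plane}, which supplies existence of an unbounded component for $\lambda$ large enough, this gives $\lambda_u(L)\leq 5\sqrt{2}\pi L^{-1}$.

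I expect the main obstacle to lie in quantifying the combinatorial count of admissible ball-sequences in stage one tightly enough to identify the precise threshold $5\sqrt{2}\pi$: one must track both $C$ and $c$ above carefully, and a loose count would give a worse constant in the upper bound. The fine geometric information in Lemma \ref{lemma:ballsoncircle}, particularly the $5/2$ separation on each circle, is what makes the tight count possible.
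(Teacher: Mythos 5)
Your stage two is essentially the paper's argument (cover $\partial B^h(o,R)$ via Lemma \ref{lemma:ballsoncircle}, union bound against Lemma \ref{lemma:Vconnect}, then the planar separation step), so that part is fine. The problem is stage one: your proposed proof of the key Lemma \ref{lemma:Vconnect} both departs from the paper's route and contains two genuine gaps that your own "expected main obstacle" paragraph does not address.

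First, the bounded-branching claim is unjustified. Lemma \ref{lemma:ballsoncircle} tells you about hyperbolic separation of the cover points on a given circle; it says nothing about which balls on $\partial B^h(o,k+1)$ a vacant path entering a given ball on $\partial B^h(o,k)$ can reach. A continuous vacant path connecting $B^h(o,1)$ to $B^h(x,1)$ is free to wander laterally all the way around the annulus between the two circles before crossing to the next one, so there is no hard bound on the "branching"; lateral excursions are only penalised softly (they cost vacancy probability), and balancing that penalty against the combinatorial count is a substantial multi-scale argument that you have not carried out. Second, even for a fixed ball-sequence the estimate is backwards: the event that some vacant path passes through the prescribed balls is not contained in the event that the straight segments joining them are vacant (indeed the former contains the latter), so the bound $\exp(-c\lambda L R)$ you invoke does not apply to the event you need to control.

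The paper avoids both issues by constructing explicit blocking structures in the \emph{occupied} set $\CC$ rather than counting vacant configurations. For each unit interval $[k,k+1]$ on the segment $l[o,x]$ it looks for a stick $l_k$ hitting it at angle in $[\pi/4,3\pi/4]$ with intersection point near its center, then uses Proposition \ref{prop:half-plane}/Corollary \ref{cor:seedperc} to propagate $l_k$ to infinity through the two orthogonal half-planes $H_k^{\pm}$; Lemma \ref{lemma:Hkdisjoint} isolates the half-planes at different scales so that the indicator events $X_2, X_6, X_{10},\dots$ are i.i.d.\ Bernoulli with parameter close to $1$. A single such "stick plus two infinite clusters" forms an unbounded barrier in $\CC$ across which no vacant path can pass, which is the planar idea that makes the decay rate both clean and quantitative (and is where the constant $5\sqrt{2}\pi$, coming from the $\mu$-mass $L/(\sqrt{2}\pi)$ in \eqref{eqn:Lmeasure}, actually originates). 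You would need to discover something like this barrier construction to close the gaps in your stage one.
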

The main step in proving Proposition \ref{prop:lambdauupperbound} will be the following lemma.
\begin{lemma} \label{lemma:Vconnect}
For any $0<L<\infty$ large enough and $\lambda=5 \sqrt{2} \pi L^{-1}$,
we have that
\[
\BP(B^h(o,1) \stackrel{\CV}{\longleftrightarrow} B^h(x,1))
\leq 100e^{-1.15 d_h(o,x)}
\]
for any $x\in \BH^2$.
\end{lemma}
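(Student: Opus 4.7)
The plan is to bound $\BP\bigl(B^h(o,1) \stackrel{\CV}{\longleftrightarrow} B^h(x,1)\bigr)$ by a discrete chain-of-anchors argument combined with the exponential segment-vacancy estimate from Lemma \ref{lemma:altPoisson}. Let $d = d_h(o,x)$. Any vacant path from $B^h(o,1)$ to $B^h(x,1)$ must cross every sphere $\partial B^h(o,k)$ for integer $k \in \{2,\ldots,\lfloor d-1\rfloor\}$, and by Lemma \ref{lemma:ballsoncircle} this crossing occurs within distance $1$ of one of $N_k \leq \pi\sinh(k)+1$ equispaced anchor points $w^{(k)}_1,\ldots,w^{(k)}_{N_k}$ on that sphere. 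Each vacant path thus determines a \emph{chain} $(j_2,\ldots,j_{\lfloor d-1\rfloor})$ of anchor indices, so that a union bound over chains replaces the original connection probability by a sum of simpler per-chain probabilities.

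The quantitative heart of the argument is the identity
\[
\BP\bigl(l[w,w'] \subset \CV\bigr) \;=\; \exp\!\bigl(-2\lambda L\, d_h(w,w')/\pi\bigr) \;=\; \exp\!\bigl(-10\sqrt{2}\, d_h(w,w')\bigr),
\]
which follows from Lemma \ref{lemma:altPoisson} at the prescribed intensity $\lambda = 5\sqrt{2}\pi L^{-1}$. The strategy is to upper-bound each per-chain probability by the probability that a specific union of short line segments joining successive anchors is vacant, which by the above identity yields decay exponential in the total chain length; since any chain realized by a path crossing all the spheres has total length at least $d - O(1)$, each chain contributes at most $\exp(-10\sqrt{2}(d - O(1)))$. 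Independence of the vacancy events across different annular strips is achieved by replacing the integer radii above with a sparser grid of radial spacing exceeding $L$, so that the sticks potentially affecting each strip form disjoint, and hence independent, subsamples of $\omega^{\lambda,L}$.

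Finally, the potential combinatorial explosion of chains is tamed by hyperbolic geometry: by Lemma \ref{lemma:xydist}, the distance between two anchors at radii $k$ and $k+1$ differing in angle by $\Delta\theta$ scales like $\sinh(k)\Delta\theta$, so only a bounded number of anchors on the next sphere lie within any fixed distance of a given anchor. The ``geometrically compatible'' chains therefore number at most $C^{\lfloor d \rfloor}$ for a constant $C$ depending only on the radial spacing, rather than the naive product $\prod_k N_k$. Combining the per-chain decay with this bound on the chain count, and optimizing the radial spacing, one obtains the claimed bound $100 e^{-1.15 d_h(o,x)}$. The main obstacle will be keeping the independence argument and the chain-counting tight enough to extract the explicit constant $1.15$ in the exponent, and in particular to verify that the per-leg segment-vacancy event really upper-bounds the probability of a vacant sub-connection through the chain (possibly requiring a thickening of the segments to an $O(1)$ tube and a mildly augmented form of Lemma \ref{lemma:altPoisson}).
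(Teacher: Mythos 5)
The proposal has a genuine gap, and it is fatal to the approach. You claim to ``upper-bound each per-chain probability by the probability that a specific union of short line segments joining successive anchors is vacant'', but this inequality points the wrong way. If $w,w'$ are consecutive anchors in a chain, vacancy of the straight segment $l[w,w']$ is one particular way of achieving a vacant connection between $B^h(w,1)$ and $B^h(w',1)$, so
\[
\BP\bigl(l[w,w']\subset\CV\bigr) \;\le\; \BP\bigl(B^h(w,1)\stackrel{\CV}{\longleftrightarrow}B^h(w',1)\bigr),
\]
i.e.\ Lemma~\ref{lemma:altPoisson} gives you a \emph{lower} bound for the per-leg connection probability, not an upper bound. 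Thickening the segment to a tube of bounded width does not fix this: a vacant path between the two anchor neighborhoods is free to detour around any such tube. To run a chain/union-bound argument of the form you sketch, you would first need an exponential upper bound on the probability of a vacant two-point connection across a single annulus, and nothing in your proposal (nor in Lemma~\ref{lemma:altPoisson}) supplies that. Until that per-annulus estimate is in hand, the chain decomposition, the sparse radial spacing, and the hyperbolic chain-count all have nothing to multiply.

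The paper sidesteps the issue entirely by building \emph{barriers in the occupied set} rather than trying to bound vacancies. For each integer $k$ it asks for a stick in $\CL^L([k,k+1]\times[\pi/4,3\pi/4]\times[-L/4,L/4])$ crossing $l[o,x]$, and then, via Corollary~\ref{cor:seedperc}, for the occupied cluster of that stick to be unbounded in each of the two half-planes $H_k^{\pm}$ hanging off its two arms. When this event $X_k$ occurs, planarity forces any vacant path from $B^h(o,1)$ to $B^h(x,1)$ to be blocked, so the two-point vacant connection probability is at most the probability that $X_k=0$ for all $k$. Lemma~\ref{lemma:Hkdisjoint} shows $H_k^\pm$ and $H_{k+4}^\pm$ are disjoint, giving independence of $X_k$ and $X_{k+4}$, and each fails with probability at most $0.01$, which yields $\BP(\text{no barrier}) \le 0.01^{\,R/4-1} = 100e^{-1.15R}$. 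Note that the exponent $1.15=-\log(0.01)/4$ has nothing to do with the vacancy rate $2\lambda L/\pi=10\sqrt{2}$ that your proposal computes; that mismatch is itself a sign that a different mechanism is in play.
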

Before proving Lemma \ref{lemma:Vconnect}, let us see how Proposition 
\ref{prop:lambdauupperbound} follows from it.
\begin{proof}[Proof of Proposition \ref{prop:lambdauupperbound} from Lemma \ref{lemma:Vconnect}]
By Lemma \ref{lemma:half-planeperc} and Proposition \ref{prop:half-plane}, 
$\CC$ will contain an unbounded connected 
component with probability 1 whenever $\lambda \geq 5 \sqrt{2} \pi L^{-1}$ 
and $L<\infty$ is large enough. Furthermore, by taking $L$ perhaps even larger, 
we may assume that the conclusion of Lemma \ref{lemma:Vconnect} holds. For any such 
$L,$ we can use monotonicity (see Section \ref{sec:stickprocess}) to conclude 
that it suffices to prove 
the statement for $\lambda=5 \sqrt{2} \pi L^{-1}$ and so  
we also assume that $\lambda=5 \sqrt{2} \pi L^{-1}.$

For any $0<R<\infty,$ we can use Lemma \ref{lemma:ballsoncircle} to conclude that there exists a 
covering of $\partial B^h(o,R)$ by using $N=\lceil \pi \sinh(R) \rceil$ balls of radius 1, 
all centered on $\partial B^h(o,R)$. 
Let $\CB_R$ be the set of those balls $B^h(x_k,1)$ (where $k=1,\ldots,N$) with the 
property that 
\[
B^h(o,1) \stackrel{\CV}{\longleftrightarrow} B^h(x_k,1).
\]
Using Lemma \ref{lemma:Vconnect}, we see that 
\begin{align*}
& \BP(B^h(o,1) \stackrel{\CV}{\longleftrightarrow} \partial B^h(o,R))
=\BP(|\CB_R|>0)  
\leq \BE[|\CB_R|] \\
& \leq \lceil \pi \sinh(R) \rceil 100 e^{-1.15R}
\leq 100 \pi e^R e^{-1.15R} 
\to 0 \textrm{ as } R \to \infty.
\end{align*}
We conclude that the probability that $B^h(o,1)$ touches an unbounded connected component 
of $\CV$ is 0. 
Since we can cover $\BH^2$ by a countable number of balls of radius 1, the first statement 
follows, i.e. that $\CV$ cannot contain an unbounded connected component. 
Next, observe that any two disjoint connected components of 
$\CC$ would have to be separated by an unbounded connected component of $\CV.$
Since by Lemma \ref{lemma:half-planeperc} and Proposition \ref{prop:half-plane}, 
$\CC$ contains an unbounded connected
component, we conclude that this must be unique.
\end{proof}

What remains is to prove Lemma \ref{lemma:Vconnect}, but 
before delving into the proof, we will informally explain the idea behind it. 
Consider therefore the line $l[o,x].$ By Lemma \ref{lemma:altPoisson}, the measure 
of the set of sticks which intersects $l[o,x]$ equals $\frac{2}{\pi}d_h(o,x)L$.
Therefore,
\[
\BP(l[o,x]\subset \CV) \leq e^{-\frac{2\lambda }{\pi}d_h(o,x)L}
\]
coming very close to the statement of Lemma \ref{lemma:Vconnect}. However, even if $l[o,x]$
is ``cut off'' by some $l_L \in \omega^{\lambda,L},$ this does not rule out a connection 
between $B^h(o,1)$ and $B^h(x,1)$ in $\CV,$ as a potential path can go around $l_L$. 
Intuitively however, this should be very unlikely, since such a path would typically 
need to take a rather long detour. Observe also that
if the ends of $l_L$ are connected to the top and bottom parts
of the boundary $\partial \BH^2$ in $\CC,$ this does disconnect $B^h(o,1)$ from $B^h(x,1)$ 
in $\CV.$ 
 Therefore, if a stick lands on $l[o,x],$ this should typically 
suffice in order to break any connection between $B^h(o,1)$ and $B^h(x,1)$ in $\CV.$
Our strategy is then to first find sticks which intersects $l[o,x],$ and then 
use Corollary \ref{cor:seedperc} to show that these indeed separate $B^h(o,1)$ from 
$B^h(x,1)$ in $\CV.$

Let $H^+,H^-$ denote the upper and lower half-planes of $\BH^2$ respectively. That is
\[
H^+:=\{x\in \BH^2: 0<\theta(x)<\pi\} \textrm{ while }
H^-:=\{x\in \BH^2: \pi<\theta(x)<2\pi\}.
\]
Consider the set of sticks $\CL^L([k,k+1]\times [\pi/4,3\pi/4]\times[-L/4,L/4]),$ 
which according to Lemma \ref{lemma:altPoisson} has $\mu$-measure 
\begin{equation} \label{eqn:Lmeasure}
\frac{L}{2 \pi}\int_{\pi/4}^{3\pi/4}\sin \varphi \dd \varphi
=\frac{L}{\sqrt{2}\pi}.    
\end{equation}
Then, fix some
\begin{equation}\label{eqn:lkdef}
l_k=l_k(\rho',r,\varphi)\in \CL^L([k,k+1]\times [\pi/4,3\pi/4]\times[-L/4,L/4])    
\end{equation}
and let $x_k^+\in H^+$ and $x_k^- \in H^-$ be the two points on $l_k$ which are at distance 
$L/8$ from the endpoints of $l_k.$ Furthermore, let $g_k^+$ and $g_k^-$ be the geodesics 
which intersect $x_k^+$ and $x_k^-$ respectively and which are orthogonal to $l_k.$
Then, let $H_k^+$ and $H_k^-$ be the two half-planes defined by $g_k^+$ and $g_k^-$
respectively, and which do not contain $o.$ Finally, let $e_{k,1}^+$ and $e_{k,2}^+$
be the endpoints of $g_k^+$ counted counterclockwise. (We warn the reader that this notation 
clashes somewhat with the notation of Section \ref{sec:lambdac} where $l_0,l_1,l_{-1},l_{1,1}$
etc was used. From here on, the definition \eqref{eqn:lkdef} is what should be considered.) 

\begin{lemma} \label{lemma:Hkdisjoint}
We have that $H_k^+\cap H_{k+4}^+=\emptyset$ and that $H_k^+ \subset H^+$ 
for every $k.$ Similarly, $H_k^-\cap H_{k+4}^-=\emptyset$ and $H_k^- \subset H^-$ 
for every $k.$
\end{lemma}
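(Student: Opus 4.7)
The plan is to prove the ``+'' claims (the ``$-$'' claims follow by reflection through $g_0$), splitting into two steps: the $H^+$ inclusion first, then disjointness. For \textbf{Step~1} ($H_k^+\subset H^+$), the reduction is to show $g_k^+\cap g_0=\emptyset$; then, as $g_k^+$ is a connected geodesic containing $x_k^+\in H^+$, it lies in $H^+$, so the half-plane $H_k^+$ on the side of $g_k^+$ opposite $o\in g_0$ lies in $H^+$ as well. Let $y_k\in g_0$ be the foot of the perpendicular from $x_k^+$. In the right hyperbolic triangle $x_k^+,(\rho'_k,0),y_k$, the hypotenuse has length $d:=d_h(x_k^+,(\rho'_k,0))\geq L/8$ and the acute angle at $(\rho'_k,0)$ lies in $[\pi/4,\pi/2]$. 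Standard right-triangle identities give $\sinh h_k\geq\sinh(L/8)/\sqrt 2$ and $\sin\psi_k\leq 1/\cosh h_k$ where $h_k:=d_h(x_k^+,y_k)$ and $\psi_k$ is the angle at $x_k^+$. Since $g_k^+\perp l_k$ at $x_k^+$, the angle at $x_k^+$ between $g_k^+$ and $x_k^+y_k$ equals $\pi/2-\psi_k$. By the angle-of-parallelism formula $\Pi(h):=2\arctan(e^{-h})$, $g_k^+\cap g_0=\emptyset$ precisely when $\pi/2-\psi_k>\Pi(h_k)$, which holds for $L$ large since both $\psi_k$ and $\Pi(h_k)$ tend to $0$.

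For \textbf{Step~2} ($H_k^+\cap H_{k+4}^+=\emptyset$) I exhibit a separating geodesic. The other leg of the Step-1 triangle satisfies $\tanh w_k=\tanh d\,|\cos\varphi_k|\leq 1/\sqrt 2$, giving $w_k:=d_h((\rho'_k,0),y_k)<\tanh^{-1}(1/\sqrt 2)<1$. Combined with $(\rho'_k,0)\in[k,k+1]$ on $g_0^+$, this places $y_k$ within hyperbolic distance $1$ of $[k,k+1]$; similarly $y_{k+4}$ is within $1$ of $[k+4,k+5]$, so $d_h(y_k,y_{k+4})\geq 1$ with $y_k$ closer to $o$ than $y_{k+4}$. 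Let $p^*$ be the hyperbolic midpoint of $y_k,y_{k+4}$ and $\gamma^*$ the geodesic perpendicular to $g_0$ at $p^*$. The key claim is $g_k^+\cap\gamma^*=\emptyset$ (and symmetrically for $g_{k+4}^+$) for $L$ large. Let $z_k\in\gamma^*$ be the foot of the perpendicular from $x_k^+$. Working in Fermi coordinates $(s,h)$ along $g_0$ with distance formula $\cosh d((s_1,h_1),(s_2,h_2))=\cosh h_1\cosh h_2\cosh(s_2-s_1)-\sinh h_1\sinh h_2$, the perpendicularity at $z_k$ yields $\tanh d_h(p^*,z_k)=\tanh h_k/\cosh d_h(y_k,p^*)$ and $u_k:=d_h(x_k^+,z_k)\to\infty$ as $L\to\infty$. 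The Lambert quadrilateral $x_k^+y_kp^*z_k$ has acute angle $\omega$ at $x_k^+$, and Gauss--Bonnet gives area $=\pi/2-\omega$; the area is bounded below by that of the Fermi rectangle $\{s_k\leq s\leq s_{p^*},\,0\leq h\leq d_h(p^*,z_k)\}$ (which lies inside the quadrilateral since the top geodesic is monotone in $h$), namely $d_h(y_k,p^*)\sinh d_h(p^*,z_k)\to d_h(y_k,p^*)/\sinh d_h(y_k,p^*)$ as $L\to\infty$, a positive constant depending only on $d_h(y_k,p^*)\in[1/2,7/2]$. Hence $\omega\leq\pi/2-c$ for a universal $c>0$, so the acute angle between $g_k^+$ (perpendicular to $l_k$) and the perpendicular $x_k^+z_k$ is at least $c-\psi_k\geq c/2$ for $L$ large; since $\Pi(u_k)\to 0$, an angle-of-parallelism argument analogous to Step~1 yields $g_k^+\cap\gamma^*=\emptyset$.

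Given the claim, $g_k^+\subset V_k$ (the component of $\BH^2\setminus\gamma^*$ containing $o,x_k^+,y_k$) and $g_{k+4}^+\subset V_{k+4}$. Using $g_0\cap g_k^+=\emptyset$ from Step~1 together with the connectedness of $V_{k+4}$ and the fact that $V_{k+4}\cap g_0$ is non-empty, one concludes that $V_{k+4}$ lies entirely on the $o$-side of $g_k^+$. Hence $H_k^+\subset V_k$, symmetrically $H_{k+4}^+\subset V_{k+4}$, and $V_k\cap V_{k+4}=\emptyset$ completes the proof. \textbf{The main obstacle} is the Lambert-quadrilateral area estimate in Step~2, requiring careful use of Fermi coordinates and Gauss--Bonnet to bound $\omega$ away from $\pi/2$ uniformly in the parameters $\varphi_k\in[\pi/4,3\pi/4]$ and $\rho'_k$; the remaining angle-of-parallelism argument is then a direct repetition of Step~1.
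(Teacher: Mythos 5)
Your proof is correct in outline, but it takes a genuinely different and considerably heavier route than the paper's. The paper argues directly with triangles having an ideal vertex, via \eqref{eqn:hypcosinerule_infinite}: for $H_k^+\subset H^+$ it bounds the angle $\alpha_k$ at $(\rho'_k,0)$ toward the ideal endpoint $e^+_{k,1}$ of $g_k^+$ by $\approx 2e^{-L/8}$, and for disjointness it looks at the ideal triangle on $(\rho'_k,0),(\rho'_{k+4},0),e^+_{k+4,2}$ and uses $d_h((\rho'_k,0),(\rho'_{k+4},0))\geq 3$ together with $\gamma_k\in[\pi/5,3\pi/4]$ to get $\beta_k\leq\pi/9$ and hence $\alpha_k+\beta_k<\pi/8<\varphi_k$; this is a few lines using only the sine/cosine rules already set up in Section~\ref{sec:2dhyp}. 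You instead drop perpendicular feet $y_k$ on $g_0$, build an explicit separating geodesic $\gamma^*$ through the midpoint of $y_k,y_{k+4}$, and compare $g_k^+$ to $\gamma^*$ via a Lambert quadrilateral, Fermi coordinates, Gauss--Bonnet and the angle of parallelism. I believe your sketch can be completed: the Fermi-rectangle containment is sound because perpendicularity of $x_k^+z_k$ to $\gamma^*$ at $z_k$, together with convexity of the distance to $g_0$ along geodesics, forces $z_k$ to realize the minimum of $h$ on that edge, so $\pi/2-\omega\geq a\sinh b$ with $a\sinh b\to a/\sinh a>0$ uniformly for $a\in[1/2,7/2]$, and $u_k=d_h(x_k^+,z_k)\to\infty$ so $\Pi(u_k)\to 0$. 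What your route buys is an explicit separating geodesic and an argument that never touches $\partial\BH^2$; what the paper's route buys is brevity, elementary explicit constants, and no need to verify a uniform Gauss--Bonnet area bound over the parameters $\varphi_k,\rho'_k$. Both are valid, but the paper's is the one the existing toolkit points to, and it is much shorter.
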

\begin{proof}
Clearly, it suffices to show the first statement, since the second the follows by symmetry.
Consider therefore some fixed $l_k=l_k(\rho'_k,r_k,\varphi_k)$ as in \eqref{eqn:lkdef}, along with the 
triangle defined by the three points $(\rho'_k,0), x_k^+$ and $e^+_{k,1}$ 
(the situation is 
similar to that of Figure \ref{fig:embeddtree1}).
The angle $\alpha_{k}$ between $l[(\rho'_k,0), x_k^+]$ and 
$l[(\rho'_k,0), e^+_{k,1}]$ 
can, as explained in the proof of Lemma \ref{lemma:half-planes}, be bounded by 
\[
\alpha_k=\sin^{-1}\left(\frac{1}{\cosh(d_h((\rho'_k,0),x_k^+))}\right)
\leq \sin^{-1}\left(\frac{1}{\cosh(L/8)}\right)\leq 2e^{-L/8}.
\]
Since $\varphi_k\in[\pi/4,3\pi/4],$ it follows that $\varphi_k-\alpha_k>0$ and so 
$H_k^+ \subset H^+$ whenever $0<L<\infty$ is large enough.

Next consider the triangle with endpoints $(\rho'_k,0),(\rho'_{k+4},0)$ and 
$e^+_{k+4,2}.$ Then,
let $\beta_k$ be the angle between $l[(\rho'_k,0),(\rho'_{k+4},0)]$ and 
$l[(\rho'_k,0),e^+_{k+4,2}],$ while $\gamma_k$ is the (inner) angle between 
$l[(\rho'_k,0),(\rho'_{k+4},0)]$ and $l[(\rho'_{k+4},0),e_{k+4,2}],$ see Figure
\ref{fig:trianglek}. 
\begin{figure}[h] 
\begin{center}
\includegraphics[page=4, clip=true, trim=85 60 300 40 , width=12cm]{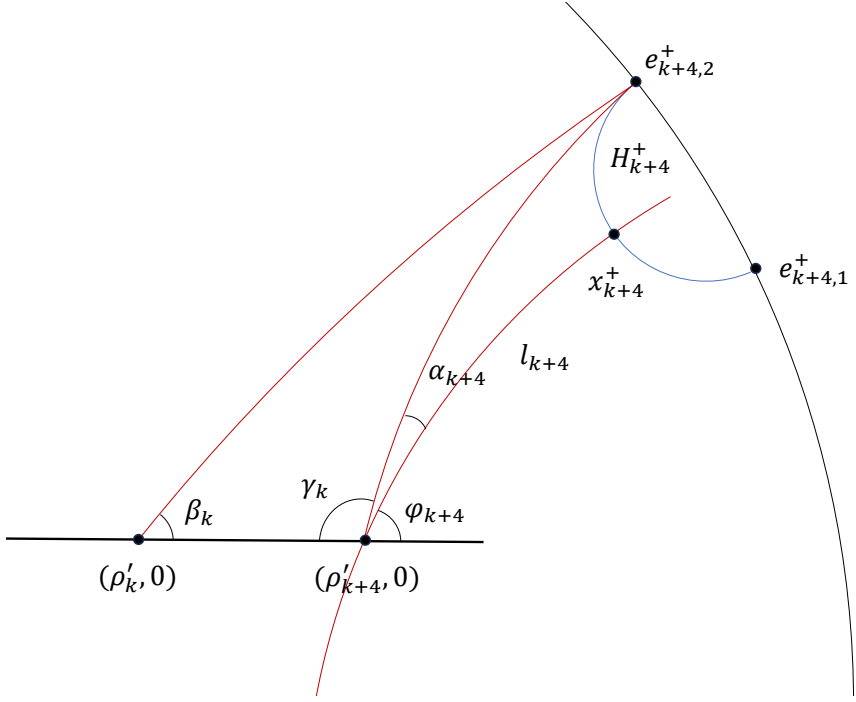} 
\caption{Illustration of angles.} \label{fig:trianglek}   
\end{center}
\end{figure}
Note that since we assume that $\varphi_{k+4}\leq 3 \pi/4$, it follows 
that $\gamma_k= \pi-\varphi_{k+4}-\alpha_{k+4}\geq \pi/4-2e^{-L/8}\geq \pi/5$
for $0<L<\infty$ large enough. Furthermore, 
$\gamma_k \leq \pi-\varphi_{k+4} \leq 3 \pi/4,$ and so 
by the hyperbolic law of cosines \eqref{eqn:hypcosinerule_infinite}, 
we have that 
\[
\cosh(d_h((\rho'_k,0),(\rho'_{k+4},0)))
=\frac{1+\cos(\beta_k)\cos(\gamma_k)}{\sin(\beta_k)\sin(\gamma_k)}
\leq \frac{2}{\sin(\beta_k)\sin(\pi/5)}. 
\]
Then, since $d_h((\rho'_k,0),(\rho'_{k+4},0))\geq 3$, we see that
\[
\beta_k 
\leq \sin^{-1}\left(\frac{2}{\sin(\pi/5)}
\frac{1}{\cosh(3)}\right)\leq \frac{\pi}{9},
\]
where the last inequality can be checked by elementary means. 
We conclude that 
\[
\beta_k+\alpha_k\leq \beta_k+2e^{-L/8}< \pi/8
\]
whenever $0<L<\infty$ is large enough. Then, since $\varphi_k\geq \pi/4,$
it follows that $H_k^+\cap H_{k+4}^+=\emptyset$.
\end{proof}

We are now ready to prove Lemma \ref{lemma:Vconnect}.

\begin{proof}[Proof of Lemma \ref{lemma:Vconnect}]

As in \eqref{eqn:defCH0}, let for any half-plane $H,$
\[
\CC_{|H}:= \bigcup_{l_L\in \omega^{\lambda,L}: l_L\subset H} l_L.  
\]
Then, for any fixed stick $l_L,$ let
$\left(\CC_{|H}\right)_{l_L}\subset \CC_{|H}\cup l_L$ denote the 
connected component of 
$\CC_{|H} \cup l_L$ which contains $l_L.$

According to Corollary \ref{cor:seedperc}, for any $0<p<1$ 
and any fixed $l_k$ as in \eqref{eqn:lkdef},
the probability that 
$\left(\CC_{|H_k^+}\right)_{l_k}$ is an unbounded connected component is larger than $p$ for every 
$0<L<\infty$ large enough. Furthermore, the analog statement holds for $\left(\CC_{|H_k^-}\right)_{l_k}$. 
Let $X_k\in\{0,1\}$ be equal to 1 if there exists 
$l_k\in \omega^{\lambda,L} \cap \CL^L([k,k+1]\times [\pi/4,3\pi/4]\times[-L/4,L/4])$ 
and if $\left(\CC_{|H_k^+}\right)_{l_k}$ and $\left(\CC_{|H_k^-}\right)_{l_k}$ are unbounded.
Clearly, if $X_k=1,$ for some $k=2,3,4,5,\ldots,\lfloor R \rfloor -1$, then the event 
$B^h(o,1) \stackrel{\CV}{\longleftrightarrow} B^h((R,0),1)$ cannot occur. Using Corollary 
\ref{cor:seedperc}, we can therefore conclude that if we let $p=999/1000,$ we have that
for any $0<L<\infty$ large enough
\begin{align*}
& \BP(X_k=1)\geq p^2 \BP\left(\exists l_k\in \omega^{\lambda,L}\cap 
\CL^L([k,k+1]\times [\pi/4,3\pi/4]\times[-L/4,L/4])\right) \\
& =p^2 \left(1-\exp(-\lambda \mu(\CL^L([k,k+1]\times [\pi/4,3\pi/4]\times[-L/4,L/4])))\right) \\
& =p^2 \left(1-\exp\left(- \frac{\lambda L}{\sqrt{2}\pi}\right)\right)
=p^2\left(1-e^{-5}\right)>0.99,
\end{align*}
where we used \eqref{eqn:Lmeasure} and that $\lambda=5\sqrt{2}\pi L^{-1},$ and where the last 
inequality is easily verified by elementary means.

Using Lemma \ref{lemma:Hkdisjoint}, we see that by construction,
$X_k$ and $X_{k+4}$ are independent Bernoulli random variables with parameter 
$0.99,$ and so the sequence $X_2,X_6,X_{10},\ldots$ 
is an i.i.d.\ sequence. Let $M=\lfloor \frac{\lfloor R \rfloor +1}{4}\rfloor$ be the largest integer 
such that $4M-2\leq R-1$, and observe that 
\[
M \geq \frac{\lfloor R \rfloor +1}{4}-1 \geq \frac{R}{4}-1.
\]
We then have that for $R\geq 3$ so that 
$M=\lfloor \frac{\lfloor R \rfloor +1}{4}\rfloor\geq 1,$
\begin{align*}
& \BP(B^h(o,1) \stackrel{\CV}{\longleftrightarrow} B^h((R,0),1)) 
\leq \BP\left(\sum_{m=1}^{M} X_{4m-2}=0\right) 
\\
&=\BP(X_1=0)^{M} 
\leq  \BP(X_1=0)^{R/4-1} 
\leq \exp\left((R/4-1)\log (0.01)\right) \\
& =100\exp\left(\frac{\log(0.01)}{4}R\right)
\leq 100 e^{-1.15 R}.
\end{align*} 
The inequality is trivial for $0\leq R \leq 3$ and so the statement follows.
\end{proof}
\noindent
{\bf Remark:} The situation with Theorem \ref{thm:lambdaumain} is similar to that 
of Theorem \ref{thm:lambdacmain} in that clearly, the value 
$5\sqrt{2}\pi$ in the 
upper bound can be improved. However, in order to make it match the lower bound, 
one would need to come up with new ideas or perhaps improving the existing ones 
to a very large degree. This is currently outside of what we are able to 
do, and so instead of attempting to improve on the constant a little bit, 
we settle for relative simplicity.

\medskip

\noindent
{\bf Acknowledgement:} We would like to thank the anonymous referee for many useful comments.

\end{document}